\documentclass[11pt]{amsart}

\setcounter{tocdepth}{1}

 \usepackage{graphicx,color}
\usepackage{geometry}
\geometry{letterpaper}

\usepackage{color, graphicx}
\usepackage[all]{xy}

\usepackage{calrsfs}
\usepackage[T1]{fontenc} 
\usepackage{textcomp}
\usepackage{times}
 \usepackage[scaled=0.92]{helvet} 

\newtheorem{introtheorem}{Theorem}
\theoremstyle{plain}
\swapnumbers
\newtheorem{thm}{Theorem}[section]
\newtheorem{prop}[thm]{Proposition}
\newtheorem{cor}[thm]{Corollary}
\newtheorem{lem}[thm]{Lemma}
\theoremstyle{definition}
\newtheorem{df}[thm]{Definition}
\newtheorem{notation}[thm]{Notation}

\newtheorem{remark}[thm]{Remark}
\newtheorem{se}[thm]{}
\newtheorem*{remark*}{Remark}

\usepackage{amsfonts}
\usepackage{amssymb}

\renewcommand{\tilde}{\widetilde}
\renewcommand{\bar}{\overline}
\newcommand{\isomto}{\overset{\sim}{\rightarrow}}
\newcommand{\ab}{\mbox{{\tiny \textup{ab}}}}

\DeclareMathOperator{\KMS}{\mathrm{KMS}}
\DeclareMathOperator{\Z}{\mathbb{Z}}
\DeclareMathOperator{\A}{\mathbf{A}}

\DeclareMathOperator{\cH}{\mathcal{H}}

\DeclareMathOperator{\cE}{\mathcal{E}}
\DeclareMathOperator{\cC}{\mathcal{C}}
\DeclareMathOperator{\cO}{\mathcal{O}}
\DeclareMathOperator{\cM}{\mathcal{M}}

\DeclareMathOperator{\Q}{\mathbb{Q}}
\DeclareMathOperator{\R}{\mathbb{R}}
\DeclareMathOperator{\C}{\mathbb{C}}
\DeclareMathOperator{\F}{\mathbb{F}}

\DeclareMathOperator{\Gal}{\mathrm{Gal}}
\DeclareMathOperator{\End}{\mathrm{End}}
\DeclareMathOperator{\tr}{\mathrm{trace}}
\DeclareMathOperator{\p}{\mathfrak{p}}
\DeclareMathOperator{\q}{\mathfrak{q}}

\DeclareMathOperator{\f}{\mathfrak{f}}

\DeclareMathOperator{\GL}{\mathrm{GL}}
\DeclareMathOperator{\Aut}{\mathrm{Aut}}
\DeclareMathOperator{\Hom}{\mathrm{Hom}}
\renewcommand{\Im}{\mathsf{Im}}
\DeclareMathOperator{\bK}{\mathbb{K}}
\DeclareMathOperator{\bL}{\mathbb{L}}
\DeclareMathOperator{\bQ}{\mathbb{Q}}
\DeclareMathOperator{\bM}{\mathbb{M}}
\DeclareMathOperator{\bN}{\mathbb{N}}
\DeclareMathOperator{\fn}{\mathfrak{n}}
\DeclareMathOperator{\fm}{\mathfrak{m}}
\DeclareMathOperator{\fa}{\mathfrak{a}}
\DeclareMathOperator{\fb}{\mathfrak{b}}
\DeclareMathOperator{\fc}{\mathfrak{c}}

\DeclareMathOperator{\fu}{\mathfrak{u}}

\DeclareMathOperator{\fr}{\mathfrak{r}}

\usepackage{bbm}
\DeclareMathOperator{\one}{\mathbbm{1}}

\begin{document}

\date{\today\ (version 5.0)} 
\title[QSM, $L$-series and Anabelian Geometry]{Quantum Statistical Mechanics, \\ $L$-series and Anabelian Geometry}
\author[G.~Cornelissen]{Gunther Cornelissen}
\address{\normalfont Mathematisch Instituut, Universiteit Utrecht, Postbus 80.010, 3508 TA Utrecht, Nederland}
\email{g.cornelissen@uu.nl}
\author[M.~Marcolli]{Matilde Marcolli}
\address{\normalfont Mathematics Department, Mail Code 253-37, Caltech, 1200 E.\ California Blvd.\ Pasadena, CA 91125, USA}
\email{matilde@caltech.edu}

\subjclass[2010]{11M55, 11R37, 11R42,  11R56, 14H30, 46N55, 58B34,  82C10}
\keywords{\normalfont Arithmetic equivalence, quantum statistical mechanics, Bost-Connes system, anabelian geometry, Neukirch-Uchida theorem, $L$-series}

\begin{abstract} \noindent It is known that two number fields with the same Dedekind zeta function are not necessarily isomorphic. The zeta function of a number field can be interpreted as the partition function of an associated quantum statistical mechanical system, which is a $C^*$-algebra with a one parameter group of automorphisms, built from Artin reciprocity. In the first part of this paper, we prove that isomorphism of number fields is the same as isomorphism of these associated systems. Considering the systems as noncommutative analogues of topological spaces, this result can be seen as another version of Grothendieck's ``anabelian'' program, much like the Neukirch-Uchida theorem characterizes isomorphism of number fields by topological isomorphism of their associated absolute Galois groups. 

In the second part of the paper, we use these systems to prove the following. If there is a group isomorphism $  \psi \, : \,  \widehat G_{\bK}^{\ab} \isomto \widehat G_{\bL}^{\ab}$ between the character groups (viz., Pontrjagin duals) of the \emph{abelianized} Galois groups of the two number fields  that induces an equality of \emph{all} corresponding $L$-series
$ L_{\bK}(\chi,s) = L_{\bL}(\psi(\chi),s) $ (not just the zeta function), then the number fields are isomorphic. 

{This is also equivalent to the purely algebraic statement that there exists an isomorphism $\psi$ as a above and a norm-preserving group isomorphism between the ideals of $\bK$ and $\bL$ that is compatible with the Artin maps via $\psi$.}  \end{abstract}

\thanks{We thank Bart de Smit, Eugene Ha, Fumiharu Kato, Bram Mesland, Sergey Neshveyev, Jorge Plazas, Florian Pop, Simen Rustad and Bora Yalkinoglu for their  comments, and especially Hendrik Lenstra for his meticulous criticism of some of the number theory in a previous version.} 
\maketitle

\tableofcontents 

\section*{Introduction}

Can one describe isomorphism of two number fields $\bK$ and $\bL$ from associated analytic or topological objects? Here are some attempts (``no''-answers indexed by \textbf{N}; ``yes''-answers by \textbf{Y}):
\begin{enumerate}
\item[\textbf{(N1)}] An \textbf{equality of their Dedekind zeta functions} (so-called \emph{arithmetic equivalence}) does not imply that $\bK$ and $\bL$ are isomorphic, as was shown by Ga{\ss}mann (\cite{G}, cf.\ also Perlis \cite{Perlis1}, or \cite{Klingen}). An example is provided by $$\bK=\Q(\sqrt[8]{3}) \mbox{ and } \bL=\Q(\sqrt[8]{3 \cdot 2^4})$$ (\cite{Perlis1}, \cite{Komatsu}). However, the implication is true  \emph{if} $\bK$ and $\bL$ are Galois over $\bQ$ (Theorem of Bauer \cite{Bauer1} \cite{Bauer2}, nowadays a corollary of Chebotarev's density theorem, see, e.g., Neukirch \cite{Neukirch} 13.9).
\item[\textbf{(N2)}] An \textbf{isomorphism of their adele rings} $\A_{\bK}$ and $\A_{\bL}$ as topological rings does not imply that $\bK$ and $\bL$ are isomorphic, cf.\ Komatsu (\cite{Komatsu2}). An example is $$\bK=\Q(\sqrt[8]{2 \cdot 9}) \mbox{ and } \bL=\Q(\sqrt[8]{2^5 \cdot 9}).$$  An adelic isomorphism does imply in particular an equality of the zeta functions of $\bK$ and $\bL$, but is not equivalent to it --- the example in (\textbf{N1}) has non-isomorphic adele rings, cf.\ \cite{Komatsu}. However, for a global function field adelic isomorphism and arithmetic equivalence is the same, cf.\ Turner \cite{Turner}.
\item[\textbf{(N3)}] \label{abab} An \textbf{isomorphism of the Galois groups of the maximal abelian extensions} $G^{\ab}_{\bK}$ and $G^{\ab}_{\bL}$ as topological groups does not imply an isomorphism of the fields $\bK$ and $\bL$. For example, $$\bK=\Q(\sqrt{-2}) \mbox{ and } \bL=\Q(\sqrt{-3})$$ have isomorphic abelianized absolute Galois groups (see Onabe \cite{Onabe}).
\end{enumerate}
However \dots
\begin{enumerate} 
\item[\textbf{(Y1)}] An \textbf{isomorphism of their absolute Galois groups} $G_{\bK}$ and $G_{\bL}$ as topological groups implies isomorphism of the fields $\bK$ and $\bL$: this is the celebrated theorem of Neukirch and Uchida (In \cite{NeukirchInv}, Neukirch proved this for fields that are Galois over $\Q$; in \cite{U3}, Uchida proved the general case,  cf.\ also \cite{NBook} 12.2, Ikeda \cite{Ikeda} and unpublished work of Iwasawa). It can be considered the first manifestation (zero-dimensional case) of the so-called ``anabelian'' philosophy of Grothendieck (\cite{Gro}, esp.\ footnote (3)): the neologism ``anabelian'' seems to have been  coined by Grothendieck by contrast with statement \textbf{(N3)} above.
\item[\textbf{(Y2)}] In an unpublished work, Richard Groenewegen \cite{Groen} proved a \textbf{Torelli theorem} for number fields: if two number fields have ``strongly monomially equivalent'' $h^0$-function in Arakelov theory (in the sense of van der Geer and Schoof, cf.\ \cite{GS}), then they are isomorphic.
\end{enumerate}

The starting point for this study is the observation that the zeta function of a number field $\bK$ can be realized as the partition function of a quantum statistical mechanical (QSM) system in the style of Bost and Connes (cf.\ \cite{BC} for $\bK=\bQ$). The QSM-systems for general number fields that we consider are those that were constructed by Ha and Paugam (see section 8 of \cite{HP}, which is a specialization of their more general class of QSM-systems associated to Shimura varieties), and further studied by Laca, Larsen and Neshveyev in \cite{LLN}. 
This quantum statistical mechanical system consists of a $C^*$-algebra $A_{\bK}$ (the noncommutative analogue of a topological space) with a time evolution $\sigma_{\bK}$ (i.e., a continuous group homomorphism $\R \rightarrow \Aut{A_{\bK}}$) --- for the exact definition, see Section \ref{s2} below, but the structure of the algebra is 
$$
A_{\bK}:=C(X_{\bK})\rtimes J^+_{\bK}, \mbox{ with } X_{\bK}:=G^{\ab}_{\bK}\times_{\hat\cO_{\bK}^*} \hat\cO_{\bK},
$$
where $\hat\cO_{\bK}$ is the ring of finite integral adeles and $J^+_{\bK}$ is the semigroup of ideals, which acts on the space $X_{\bK}$ by Artin reciprocity. The time evolution is only non-trivial on elements $\mu_{\fn} \in A_{\bK}$ corresponding to ideals $\fn \in J^+_{\bK}$, where it acts by multiplication with the norm $N(\fn)^{it}$. We also need the (non-involutive) dagger-subalgebra $A_{\bK}^\dagger$ generated algebraically by functions in $C(X_{\bK})$ and the partial isometries $\mu_{\fn}$ for $\fn \in J_{\bK}^+$ (but \emph{not} $\mu_{\fn}^*$; such non-self adjoint algebras and their closures have been considered before in connection with the reconstruction of dynamical systems up to (piecewise) conjugacy, see e.g.\ \cite{Davidson}). 

For now, it is important to notice that the structure involves the abelianized Galois group and the adeles, but not the absolute Galois group. In this sense, it is ``not anabelian''; but of course, it is ``noncommutative'' (in noncommutative topology, the crossed product construction is an analog of taking quotients). In light of the previous discussion, it is now natural to ask whether the QSM-system (which contains simultaneously the zeta function from \textbf{(N1)}, a topological space built out of the adeles from \textbf{(N2)} and the abelianized Galois group from \textbf{(N3)}) does characterize the number field. 

We call two general QSM-systems  \emph{isomorphic} if there is a $C^*$-algebra isomorphism between the algebras that intertwines the time evolutions. Our main result is that the QSM-system cancels out the defects of \textbf{(N1)---(N3)} in exactly the right way:

\begin{introtheorem} \label{main}
Let $\bK$ and $\bL$ denote arbitrary number fields. Then the following conditions are equivalent:
 \begin{enumerate}
\item[\textup{(}i{)}] \textup{[Field isomorphism]} $\bK$ and $\bL$ are isomorphic as fields;
\item[\textup{(}ii{)}]  \textup{[QSM isomorphism]} there is an isomorphism $\varphi$ of QSM systems $(A_{\bK},\sigma_{\bK})$ and $(A_{\bL},\sigma_{\bL})$ that respects the dagger subalgebras: $\varphi(A_{\bK}^\dagger)=A_{\bL}^\dagger$. 
\end{enumerate}
\end{introtheorem}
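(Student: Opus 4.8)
The implication (i) $\Rightarrow$ (ii) is pure functoriality: a field isomorphism $\bK\isomto\bL$ induces compatible isomorphisms of the abelianized Galois groups $G^{\ab}_{\bK}\isomto G^{\ab}_{\bL}$, of the integral adeles $\hat\cO_{\bK}\isomto\hat\cO_{\bL}$ and hence of the spaces $X_{\bK}\isomto X_{\bL}$, and a norm-preserving isomorphism $J^+_{\bK}\isomto J^+_{\bL}$ of ideal semigroups, all intertwining Artin reciprocity; assembling these yields a $C^*$-algebra isomorphism $\varphi\colon A_{\bK}\isomto A_{\bL}$ carrying $\mu_{\fn}$ to $\mu_{\fn'}$, intertwining the time evolutions (as $N(\fn)=N(\fn')$) and carrying $A^\dagger_{\bK}$ onto $A^\dagger_{\bL}$. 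So the content lies in (ii) $\Rightarrow$ (i).

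For (ii) $\Rightarrow$ (i) the plan is to recover, from a dagger-preserving QSM isomorphism $\varphi$, the entire abelian class field theory package of $\bK$, and then the field. First recover the commutative coordinate algebra: $A^\dagger_{\bK}$ is the closed span of monomials $f\mu_{\fn}$ with $f\in C(X_{\bK})$, on which $\sigma_{\bK,t}$ acts by $N(\fn)^{it}$, and $N(\fn)=1$ only for $\fn=(1)$, so the $\sigma_{\bK}$-fixed subalgebra of $A^\dagger_{\bK}$ is exactly $C(X_{\bK})$; since $\varphi$ intertwines time evolutions and preserves the dagger subalgebras, $\varphi(C(X_{\bK}))=C(X_{\bL})$ and one gets a homeomorphism $X_{\bK}\cong X_{\bL}$. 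Next recover the ideals with norms: the $\mu_{\fn}$ are, up to multiplication by unitaries of $C(X_{\bK})$, precisely the $\sigma_{\bK}$-homogeneous isometries of $A^\dagger_{\bK}$ normalizing $C(X_{\bK})$, so $\varphi$ induces a norm-preserving monoid isomorphism $J^+_{\bK}\isomto J^+_{\bL}$, $\fn\mapsto\fn'$, compatible with the homeomorphism $X_{\bK}\cong X_{\bL}$ and the partial homeomorphisms implemented by the $\mu_{\fn}$. Finally recover the Galois group: by the classification of the low-temperature extremal $\KMS$ states of these systems, which are parametrized by $G^{\ab}_{\bK}$ (Ha--Paugam, Laca--Larsen--Neshveyev), $\varphi$ induces a homeomorphism $G^{\ab}_{\bK}\cong G^{\ab}_{\bL}$; the symmetry action of $G^{\ab}_{\bK}$ on these states recovers its group law, and the action of the $\mu_{\fn}$ on them recovers the Artin reciprocity map. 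The net output of this stage is a group isomorphism $\psi\colon G^{\ab}_{\bK}\isomto G^{\ab}_{\bL}$ together with a norm-preserving monoid isomorphism $J^+_{\bK}\isomto J^+_{\bL}$ that intertwines the Artin maps via $\psi$; since this monoid isomorphism is norm-preserving, $\bK$ and $\bL$ have the same Dedekind zeta function, i.e.\ are arithmetically equivalent, so they share degree over $\bQ$, signature $(r_1,r_2)$ and discriminant.

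The main obstacle is the last step: promoting this abelian class field data to a field isomorphism. By \textbf{(N1)} and \textbf{(N3)}, neither arithmetic equivalence nor a topological isomorphism $G^{\ab}_{\bK}\cong G^{\ab}_{\bL}$ suffices on its own, so the argument must fully exploit the compatibility with norms and Artin maps. The plan is to reconstruct $\bK^*$ inside its idele group: the $\psi$-compatible Artin data determines the decomposition and inertia subgroups of $G^{\ab}_{\bK}$, hence (by local class field theory) each completion $\bK_{\p}$ up to isomorphism and the local units $\cO^*_{\bK,\p}$, hence $\hat\cO^*_{\bK}$ and, via the exact sequence $1\to\hat\cO^*_{\bK}\to\A^*_{\bK,f}\to J_{\bK}\to 1$ (with $J_{\bK}$ the group completion of $J^+_{\bK}$), the finite idele group; together with the archimedean part $\R^{r_1}\times\C^{r_2}$, fixed by arithmetic equivalence, this reconstructs $\A^*_{\bK}$ and its recovered reciprocity map, whose kernel is by global class field theory the closure $\overline{\bK^*}$ of the principal ideles. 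So $\psi$-compatibility forces $\varphi$ to match $\overline{\bK^*}$ with $\overline{\bL^*}$; descending from these closures to the discrete subgroups $\bK^*,\bL^*$ identifies them as subsets of the respective adele rings, and since $\bK=\bK^*\cup\{0\}$ inherits the field structure of the ambient ring, this produces $\bK\isomto\bL$. The delicate points I anticipate are the local-to-global gluing with all compatibilities in place, the archimedean bookkeeping (matching the infinite places and the connected component of the idele class group from the signature and the zeta function alone), and the passage from $\overline{\bK^*},\overline{\bL^*}$ back to the discrete groups $\bK^*,\bL^*$ themselves; a variant avoiding closures is to compare the splitting of primes in abelian extensions via Chebotarev density. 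In any case it is this arithmetic rigidity, not the operator-algebraic bookkeeping, that is the heart of the theorem.
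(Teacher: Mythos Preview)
Your outline through the recovery of $C(X_{\bK})$, the monoid $J^+_{\bK}$ with its norms, and the group $G^{\ab}_{\bK}$ is broadly in the spirit of the paper, though each of these steps is considerably more delicate than you indicate (for instance, showing that $\varphi$ induces a \emph{single} norm-preserving monoid isomorphism $J^+_{\bK}\isomto J^+_{\bL}$ rather than merely a piecewise conjugacy requires a non-trivial ergodicity argument, and obtaining a genuine \emph{group} isomorphism $G^{\ab}_{\bK}\isomto G^{\ab}_{\bL}$ from the homeomorphism $X_{\bK}\cong X_{\bL}$ is not automatic from the KMS parametrization alone, since the symmetry action of $G^{\ab}_{\bK}$ is not a priori intrinsic to the QSM system).

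The real gap, however, is in your final step, and it is fatal as stated. First, the kernel of the Artin map $\A^*_{\bK}\to G^{\ab}_{\bK}$ is not $\overline{\bK^*}$ but $\overline{\bK^*\cdot(\bK^*_\infty)^0}$. More seriously, even granting an identification of these closures, you have no mechanism for ``descending from these closures to the discrete subgroups $\bK^*,\bL^*$'': the closure is a topological group and does not remember its dense countable subgroup. Finally, and most importantly, your claim that ``$\bK=\bK^*\cup\{0\}$ inherits the field structure of the ambient ring'' presupposes an additive structure you have not constructed: everything you have built so far is multiplicative (group isomorphisms of ideles, local units, $J^+$), and there is no ambient adele \emph{ring} isomorphism available. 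Recovering addition from purely multiplicative data is exactly the hard core of the theorem, and it cannot be finessed.

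The paper takes a completely different route at this point. It singles out the semigroup $\cO^\times_{\bK,+}$ of totally positive nonzero integers intrinsically, as those elements of $\hat\cO_{\bK}\cap\A^*_{\bK,f}$ whose associated endomorphism $\varepsilon_s$ of the QSM system is a \emph{dagger inner} endomorphism (implemented by an isometry in $A^\dagger_{\bK}$). This yields a multiplicative isomorphism $(\cO^\times_{\bK,+},\times)\isomto(\cO^\times_{\bL,+},\times)$. Additivity is then proved by an explicit arithmetic argument: for each totally split rational prime $p$, one shows via the Teichm\"uller lift that the induced map on residue fields $\F_p\to\F_p$ is the identity, hence $\varphi(x+y)\equiv\varphi(x)+\varphi(y)$ modulo every totally split prime, and Chebotarev gives infinitely many such primes of arbitrarily large norm. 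This is the missing idea in your proposal.
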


One may now ask whether the ``topological'' isomorphism from (ii) can somehow be captured by an analytic invariant, such as the Dedekind zeta function, which in itself doesn't suffice. Our second main theorem says that this is indeed the case:

\begin{introtheorem} \label{main2}
Let $\bK$ and $\bL$ denote arbitrary number fields. Then the following conditions are equivalent: 
 \begin{enumerate}
 \item[\textup{(}i{)}] \textup{[Field isomorphism]} $\bK$ and $\bL$ are isomorphic as fields;
\item[\textup{(}iii{)}] \textup{[L-isomorphism]} there is group isomorphism between (the Pontrjagin duals of) the abelianized Galois groups $$\psi \, : \, \widehat{G}_{\bK}^{\ab} \isomto \widehat{G}_{\bL}^{\ab}$$ such that for every character $\chi \in \widehat{G}_{\bK}^{\ab}$, we have an identification of $L$-series {for these generalized Dirichlet characters}
$$ L_{\bK}(\chi,s) = L_{\bL}(\psi(\chi),s). $$ 
\end{enumerate}
\end{introtheorem}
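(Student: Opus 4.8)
The plan is to establish the cycle $(i)\Rightarrow(iii)\Rightarrow(\text{the ``purely algebraic statement'' of the abstract})\Rightarrow(\text{a dagger-respecting QSM isomorphism})\Rightarrow(i)$, where the last implication is exactly Theorem~\ref{main}. The implication $(i)\Rightarrow(iii)$ is immediate: a field isomorphism $\sigma\colon\bK\isomto\bL$ induces a topological isomorphism $\sigma_*\colon G^{\ab}_\bK\isomto G^{\ab}_\bL$, and putting $\psi(\chi):=\chi\circ\sigma_*^{-1}$ one gets $L_\bK(\chi,s)=L_\bL(\psi(\chi),s)$ since $\sigma$ matches primes, residue degrees, Frobenius classes and conductors. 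So all the content is in the reverse direction, and the point of the QSM machinery is that, by Theorem~\ref{main}, it is enough to promote the analytic hypothesis $(iii)$ to a single $C^*$-isomorphism $\varphi\colon A_\bK\isomto A_\bL$ with $\varphi\circ\sigma_\bK=\sigma_\bL\circ\varphi$ and $\varphi(A^\dagger_\bK)=A^\dagger_\bL$.

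First I would unpack $(iii)$ arithmetically. Only $\zeta_\bK=L_\bK(1,s)$ among the $L_\bK(\chi,s)$ has a pole at $s=1$, so $\psi(1)=1$ and $\zeta_\bK=\zeta_\bL$ (arithmetic equivalence), which already gives, prime by prime, equal multisets of residue degrees and hence a norm-preserving bijection between the primes of $\bK$ and those of $\bL$ — though not yet the correct one. Dualizing the (automatically continuous) isomorphism $\psi$ of discrete torsion groups gives a topological isomorphism $\hat\psi\colon G^{\ab}_\bL\isomto G^{\ab}_\bK$. Comparing Dirichlet coefficients in $L_\bK(\chi,s)=L_\bL(\psi(\chi),s)$ yields, for every $n\in\N$ and every $\chi\in\widehat{G}_{\bK}^{\ab}$,
\[ \sum_{N_\bK\fa=n}\chi(\fa)\;=\;\sum_{N_\bL\fb=n}\chi\bigl(\hat\psi(r_\bL(\fb))\bigr), \]
with $r$ the Artin map on ideals prime to the conductor of $\chi$ and $\chi$ read as $0$ on the remaining ideals. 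I would stratify this by conductor — restrict to the finitely many $\chi$ factoring through a ray class group $\mathrm{Cl}^{\fm}_\bK$, invert by orthogonality of the characters of $\mathrm{Cl}^{\fm}_\bK$, then let $\fm$ run through a cofinal family — to convert it into an identity of pushforward counting measures on each ray class group, and hence: for every $n$ the multiset of ray classes of ideals of norm $n$ in $\bK$ equals, via $\hat\psi$, that of $\bL$; and, tracking which $\chi$ actually contribute, that $\hat\psi$ carries the inertia subgroup at each prime $\q$ of $\bL$ onto that at the matched prime $\p$ of $\bK$. As the inertia subgroup recovers the local unit group, this also fixes the ramification. Choosing, prime-power by prime-power, a bijection of primes realizing all of these matchings simultaneously and extending it multiplicatively produces a norm-preserving isomorphism $\theta\colon J^+_\bK\isomto J^+_\bL$ compatible with the Artin maps through $\hat\psi$.

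It then remains to build $\varphi$ from the pair $(\psi,\theta)$. I would set $\varphi(\mu_\fa):=\mu_{\theta(\fa)}$, which intertwines the time evolutions because $N_\bL(\theta(\fa))=N_\bK(\fa)$ and respects the crossed-product relations because $\theta$ is multiplicative, and define $\varphi$ on $C(X_\bK)$ by pullback along a homeomorphism $X_\bL\isomto X_\bK$ assembled from $\hat\psi$ on the $G^{\ab}$-factor together with a homeomorphism $\hat\cO_\bL\isomto\hat\cO_\bK$ built primewise from the local identifications supplied by the inertia/unit-group matching (the multiplicative action of $\cO_{\bL,\q}^*$ together with the point $0$ reconstructs $\cO_{\bL,\q}$ from the topological group $\cO_{\bL,\q}^*$, so the unit-group matching compatible with $\hat\psi$ upgrades to a matching of local rings compatible with multiplication by uniformizers). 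Since $\varphi$ keeps $C(X_\bK)$ and the $\mu_\fa$ inside $C(X_\bL)$ and the $\mu_\fb$, we get $\varphi(A^\dagger_\bK)=A^\dagger_\bL$, and Theorem~\ref{main} then delivers $\bK\cong\bL$.

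The hard part will be the middle paragraph: extracting from the bare equality of $L$-series — which a priori knows only Dirichlet coefficients — an honest norm- and Artin-compatible bijection of ideals \emph{together with} the correct identification of inertia at the ramified primes. Arithmetic equivalence alone fails here and does not even detect ramification indices (this is exactly what goes wrong in \textbf{(N1)}--\textbf{(N2)}), so one must show that twisting by \emph{all} characters recovers precisely the missing local information, i.e.\ that $\hat\psi$ transports the conductor filtration of $\widehat{G}_{\bK}^{\ab}$ onto that of $\widehat{G}_{\bL}^{\ab}$ in a way that reconstructs every decomposition and inertia group; and afterwards one must check that the homeomorphism $X_\bL\isomto X_\bK$ so produced intertwines the semigroup actions exactly, not merely modulo inertia. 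Everything else is either formal or a direct consequence of Theorem~\ref{main}.
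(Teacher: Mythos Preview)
Your overall architecture---the cycle $(i)\Rightarrow(iii)\Rightarrow(iv)\Rightarrow(ii)\Rightarrow(i)$, with the last arrow supplied by Theorem~\ref{main}---is exactly the paper's, and your extraction of the norm-preserving, Artin-compatible semigroup isomorphism $\theta\colon J_{\bK}^+\isomto J_{\bL}^+$ by character orthogonality on finite quotients of $G_{\bK}^{\ab}$ is essentially the argument of Section~12. (One small sharpening: rather than ``choosing, prime-power by prime-power, a bijection of primes'', the paper observes that the Artin map is \emph{injective} on ideals dividing a fixed integer $n$, so the counting identity $|S_1|=|S_2|$ actually pins down $\Psi(\tilde\fn)$ uniquely; multiplicativity then follows from multiplicativity of $(\psi^{-1})^*$ and of the Artin maps, and no choice is made.)

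Where you and the paper genuinely diverge is in promoting $(\hat\psi,\theta)$ to a homeomorphism $X_{\bL}\isomto X_{\bK}$. You propose to build it \emph{spatially}: match inertia subgroups under $\hat\psi$, use local class field theory to get $\hat\cO_{\bK,\p}^*\cong\hat\cO_{\bL,\varphi(\p)}^*$, then claim that ``the multiplicative action of $\hat\cO_{\bL,\q}^*$ together with the point $0$ reconstructs $\hat\cO_{\bL,\q}$'', assemble a homeomorphism $\hat\cO_{\bL}\isomto\hat\cO_{\bK}$, and finally descend to the balanced product. This is where your sketch is thinnest. The parenthetical reconstruction of $\hat\cO_{\bL,\q}$ from its unit group is not automatic: as a multiplicative monoid $\hat\cO_{\bL,\q}\setminus\{0\}\cong\hat\cO_{\bL,\q}^*\times\Z_{\geq 0}$, but extending your unit-group isomorphism to a \emph{homeomorphism} of the full local ring requires choosing uniformizers on both sides and then checking that the resulting map descends through the $\hat\cO^*$-balancing and intertwines the $J^+$-action---exactly the delicate point you flag at the end. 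Verifying this compatibility amounts to checking that $\hat\psi\circ\vartheta_{\bL}=\vartheta_{\bK}\circ\varphi$ on all of $\hat\cO_{\bL}^*$, i.e.\ local--global compatibility of class field theory together with your inertia matching; it can be done, but it is real work that your proposal does not carry out.

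The paper sidesteps all of this by working on the \emph{function-algebra} side. Section~5 produces explicit generators $f_{\chi,\fm}$ of $C(X_{\bK})$ indexed by pairs (character of conductor coprime to $\fn$, ideal in $J_{\bK,\fn}^+$), and Section~13 defines the map $C(X_{\bK})\to C(X_{\bL})$ simply by $f_{\chi,\fm}\mapsto f_{\psi(\chi),\Psi(\fm)}$. Well-definedness needs only the coarse fact $\psi(\widehat G_{\bK,n}^{\ab})=\widehat G_{\bL,n}^{\ab}$ (visible from missing Euler factors), and multiplicativity is checked by showing---using the Artin compatibility \eqref{NNN}---that $\psi(f_{\chi,\fm})$ is literally a pullback. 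No local reconstruction of $\hat\cO_{\bK}$ is needed, and the intertwining with the $J^+$-action is built in. Your route would work with more care, but the paper's is shorter precisely because it never touches the adelic coordinate $\rho$ directly.
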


Condition (iii) can be considered as the correct generalization of arithmetic equivalence (which is (iii) for the trivial character only)  to an analytic equivalence that \emph{does} capture isomorphism. It should also be observed at this point that (Hecke) $L$-series occur naturally in the description of generalized equilibrium states (KMS-states) of the QSM-system, and this is how we originally discovered the statement of the theorem. 

{Finally, there is the following purely algebraic reformulation, which upgrades \textbf{(N3)} by adding a certain compatibility of the isomorphism of abelianized Galois groups with ramification: 

\begin{introtheorem} \label{main3}
Let $\bK$ and $\bL$ denote arbitrary number fields. Then the following conditions are equivalent:
 \begin{enumerate}
\item[\textup{(}i{)}] \textup{[Field isomorphism]} $\bK$ and $\bL$ are isomorphic as fields;
\item[\textup{(}iv{)}] \textup{[Reciprocity isomorphism]} there is a topological group isomorphism $$\hat{\psi} \, : \, {G}_{\bK}^{\ab} \isomto {G}_{\bL}^{\ab}$$ and an isomorphism $$\Psi \, : \, J_{\bK}^+ \isomto J_{\bL}^+$$ of semigroups of ideals such that the following two compatibility conditions are satisfied: 
\begin{enumerate}
\item[\textup{(a)}] compatibility of $\Psi$ with norms: $N_{\bL}(\Psi(\fn))=N_{\bK}(\fn)$ for all ideals $\fn \in J_{\bK}^+$; and
\item[\textup{(b)}] compatibility with the Artin map: for every finite abelian extension $$\bK'=\left(\bK^{\ab}\right)^N/\bK$$ (with $N$ a subgroup in $G_{\bK}^{\ab}$) and every prime $\p$ of $\bK$ unramified in $\bK'$, the prime $\Psi(\p)$ is unramified in the corresponding field extension $$\bL':=\left(\bL^{\ab}\right)^{\hat\psi(N)}/\bL,$$ and we have
$$ \hat\psi \left( \mathrm{Frob}_{\p} \right) = \mathrm{Frob}_{\Psi(\p)}. $$  
\end{enumerate}
\end{enumerate}
\end{introtheorem}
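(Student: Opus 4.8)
The plan is to prove the trivial implication (i)$\Rightarrow$(iv) directly and to deduce the converse from Theorem~\ref{main2} by establishing (iv)$\Rightarrow$(iii). For (i)$\Rightarrow$(iv): an isomorphism of fields $\alpha\colon\bK\isomto\bL$ extends to algebraic closures and hence induces a topological isomorphism $\hat\psi\colon G_\bK^{\ab}\isomto G_\bL^{\ab}$; the map $\fn\mapsto\alpha(\fn)$ is a norm-preserving isomorphism $J_\bK^+\isomto J_\bL^+$ because $[\cO_\bK:\fn]=[\cO_\bL:\alpha(\fn)]$; and functoriality of Artin reciprocity under $\alpha$ gives (b) on the nose, with $\alpha$ carrying $\bK'$ to $\bL'$, preserving ramification, and sending $\mathrm{Frob}_\p$ to $\mathrm{Frob}_{\alpha(\p)}$.

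For (iv)$\Rightarrow$(iii), let $\psi$ be the inverse of the dual homomorphism $\chi\mapsto\chi\circ\hat\psi$, so that $\psi(\chi)=\chi\circ\hat\psi^{-1}$. Fix a character $\chi$, put $N=\ker\chi$ and $\bK'=(\bK^{\ab})^N$; since $\ker\psi(\chi)=\hat\psi(N)$, the field $\bL'=(\bL^{\ab})^{\hat\psi(N)}$ appearing in (b) is precisely the one cut out by $\psi(\chi)$. Recall that $\p\mid\f_\chi$ exactly when $\p$ ramifies in $\bK'$, and likewise for $\psi(\chi)$ and $\bL'$. For $\p$ unramified in $\bK'$, (b) gives that $\Psi(\p)$ is unramified in $\bL'$ with $\hat\psi(\mathrm{Frob}_\p)=\mathrm{Frob}_{\Psi(\p)}$, hence $\psi(\chi)(\mathrm{Frob}_{\Psi(\p)})=\chi\bigl(\hat\psi^{-1}(\mathrm{Frob}_{\Psi(\p)})\bigr)=\chi(\mathrm{Frob}_\p)$; together with $N_\bL(\Psi(\p))=N_\bK(\p)$ from (a) this identifies the Euler factor of $L_\bK(\chi,s)$ at $\p$ with that of $L_\bL(\psi(\chi),s)$ at $\Psi(\p)$. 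Since $\Psi$ is a bijection on primes matching the primes unramified for $\chi$ with those unramified for $\psi(\chi)$, the two Euler products agree for $\mathrm{Re}(s)>1$, so $L_\bK(\chi,s)=L_\bL(\psi(\chi),s)$; this is (iii), and Theorem~\ref{main2} delivers (i).

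The one genuine obstacle is that (b), as stated, yields only the implication ``$\p$ unramified in $\bK'$ $\Rightarrow$ $\Psi(\p)$ unramified in $\bL'$'', while the Euler-product comparison needs $\Psi$ to match the ramification loci of $\bK'/\bK$ and $\bL'/\bL$ \emph{bijectively}. I would close this gap by combining the one implication from (b) with the conductor--discriminant formula and the equality of absolute discriminants $|d_\bK|=|d_\bL|$, which is already forced by the norm-preserving bijection $\Psi$ (it yields $\zeta_\bK=\zeta_\bL$) through the functional equation; a cleaner-looking alternative is to check that the data in (iv) may be assumed symmetric in $\bK$ and $\bL$ by feeding into (b) the ray class fields supported at a single prime. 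One could also bypass (iii) altogether and build a QSM isomorphism directly from (iv), invoking Theorem~\ref{main}: the time evolution of the Ha--Paugam system records only ideal norms, so (a) suffices for it, and the crossed-product structure of $A_\bK=C(X_\bK)\rtimes J_\bK^+$ is governed by the ideals acting through Artin reciprocity, which is (b), with the adelic factor $\hat\cO_\bK$ reconstructed prime-by-prime from the residue cardinalities preserved by (a); but matching the inertia subgroups inside $X_\bK=G_\bK^{\ab}\times_{\hat\cO_\bK^*}\hat\cO_\bK$ along $\hat\psi$ reproduces exactly the same ramification bookkeeping, which is where I expect the real work to lie.
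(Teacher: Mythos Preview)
Your plan coincides with the paper's: show (i)$\Rightarrow$(iv) trivially, then (iv)$\Rightarrow$(iii), and invoke Theorem~\ref{main2} for (iii)$\Rightarrow$(i). The paper's Section~15 carries out (iv)$\Rightarrow$(iii) by the Dirichlet-series change of variable $\fm=\Psi(\fn)$ (using $\psi=(\hat\psi^{-1})^*$), which is the same computation as your Euler-product matching, just summed over all ideals rather than factored prime by prime.

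On the ``genuine obstacle'' you flag: you are reading (b) more literally than the paper does. In Section~15 the authors simply assert that (b) ``can be rephrased'' as the symmetric identity $\chi(\vartheta_{\bK}(\fn))=\psi(\chi)(\vartheta_{\bL}(\Psi(\fn)))$ for all $\chi$ of conductor coprime to $N_{\bK}(\fn)$; and at the end of Section~12 they state explicitly that (iv) is \emph{equivalent} to this identity (their equation~(NNN)). So the paper is effectively treating (b) as bidirectional on ramification and does not supply the extra argument you are asking for. Your instinct to justify the symmetry is therefore more scrupulous than the paper's own proof; of your proposed fixes, the cleanest is the one you call ``assuming the data symmetric'': running (b) over all open $N\supseteq I_{\p}$ gives $I_{\Psi(\p)}\subseteq\hat\psi(I_{\p})$, and one then needs the reverse inclusion, which is exactly the missing half. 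The conductor--discriminant route you sketch is less direct. Your alternative of bypassing (iii) and building a QSM isomorphism from (iv) is not what the paper does and would duplicate much of Part~B; the paper deliberately routes everything through (iii).
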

}

We first say a few words about the proofs. We start by proving that QSM-isomorphism (ii) implies field isomorphism (i). For this, we first prove that the fields are arithmetically equivalent (by interpreting the zeta functions as partition functions and studying the relation between the Hamiltonians for the two systems), and then we use some results from the reconstruction of dynamical systems from non-involutive algebras to deduce an identification of the semigroups of integral ideals of $\bK$ and $\bL$ and a compatible homeomorphism of $X_{\bK}$ with $X_{\bL}$. We use this to prove that $\varphi$ preserves a layered structure in the algebra corresponding to ramification in the field, and this allows us to prove that there is a homomorphism of $G_{\bK}^{\ab}$ with $G_{\bL}^{\ab}$ ``compatible with the Artin map'', and an isomorphism of unit ideles (built up locally from matching of inertia groups), and finally, multiplicative semigroups of the totally positive elements (viz., positive in every real embedding of the number field) of the rings of integers, which occur as inner endomorphisms of the dagger-subalgebra. We then prove that the map is additive modulo large enough inert primes, using the Teichm\"uller lift.  Finally, it is easy to pass from an isomorphism of semirings of totally positive elements to an isomorphism of the fields. 

Then we prove that L-isomorphism (iii) implies QSM-isomorphism (ii): from the matching of $L$-series, we get a matching of semigroups of ideals, compatible with the Artin map, by doing some character theory with the $L$-series of the number fields as counting functions of ideals that have a given norm and a given image under the Artin map in the maximal abelian extension where they remain unramified. We then extend these maps to the whole algebra by a topological argument. 

In this context, one may try to rewrite the main theorems in a functorial way, as a bijection of certain Hom-sets. It would be interesting to understand the relation to the functor from number fields to QSM-systems in \cite{LNT}. 

It is easy to see that reciprocity isomorphism (iv) implies L-isomorphism (iii), and of course, field isomorphism (i) implies the rest. 

The proof seems to indicate that a mere isomorphism of the $C^*$-algebras $A_{\bK}$ and $A_{\bL}$ does not suffice to conclude that $\bK$ and $\bL$ are isomorphic; we make heavy use of the compatibility with time evolution given by the norms. It would be interesting to know whether one can leave out from QSM-isomorphism the condition of preserving the dagger subalgebra. Neshveyev has shown us an example of a (non-dagger) inner endomorphism of $(A_{\bK},\sigma_{\bK})$ that doesn't respect $C(X_{\bK})$. On the other hand, QSM-isomorphism does imply arithmetic equivalence, so by Ga{\ss}mann's results, QSM-isomorphism (without requiring dagger isomorphism) for Galois extensions of $\Q$ already implies field isomorphism. 

Finally, we remark that our proof is constructive: we exhibit, from the various other isomorphisms, an explicit field isomorphism. 

\begin{remark*} We make a few remarks about the condition of L-isomorphism in the theorem. First of all, the equivalence of field isomorphism and L-isomorphism/reciprocity isomorphism is a purely number theoretical statement, without reference to QSM-systems. It is a number theoretical challenge to provide a direct proof of this equivalence (of course, one can clear the current proof of QSM-lingo).

Secondly, one may wonder whether the L-isomorphism condition (iii) can be replaced by something weaker. As we already observed, requiring (iii) for the trivial character only is not enough, but what about,  for example, this condition:
\begin{quote}
\textup{(}iii{)}$_2$ \emph{All rational quadratic $L$-series of $\bK$ and $\bL$ are equal, i.e.\ for all integers $d$ that are not squares in $\bK$ and $\bL$, we have $L_{\bK}(\chi_d,s)=L_{\bL}(\chi_d,s)$.}
\end{quote}
By considering only rational characters, one does not need to introduce a bijection of abelianized Galois groups, since there is an automatic matching of conductors. One can also consider a similar statement (iii)$_n$ for all $n$-th order rational $L$-series. 

It turns out that (iii)$_2$ is \emph{not} equivalent to (ii). We prove that as soon as $\bK$ and $\bL$ have the same zeta functions, condition (iii)$_2$ holds (the proof uses \emph{Ga{\ss}mann-equivalence}, and was discovered independently by Lotte van der Zalm in her undergraduate thesis \cite{Lotte}).  Another number theoretical challenge is to give a purely analytical proof of this statement (i.e., not using group theory).

Finally, we note that the condition of L-isomorphism is motivic: it gives an identification of  $L$-series of rank one motives over both number fields (in the sense of \cite{Deligne}, \S 8).
\end{remark*}

\begin{remark*} After announcing our result at the GTEM conference in Barcelona (september 2010), Bart de Smit rose to the first number theoretical challenge (to prove the equivalence of field isomorphism and L-isomorphism), by using Galois theory, cf.\ \cite{BDS}. The method of de Smit allowed him to prove that if for two number fields $\bK$ and $\bL$, the sets of zeta functions of all their abelian extension are equal, then the fields are isomorphic. He can also prove that it suffices for this conclusion to hold that there is a bijection between the $2$-torsion subgroups of $\widehat G_{\bK}^{\ab}$ and $\widehat G_{\bL}^{\ab}$ (so the sets of all quadratic or trivial characters) such that the corresponding $L$-series are equal, and for given fields, one can construct a finite list of quadratic characters which it suffices to check. Also, with Hendrik Lenstra, he has proven that every number field has an abelian $L$-series that does not occur for any other number field. 
\end{remark*}

\begin{remark*}[Anabelian vs.\ noncommutative]
The anabelian philosophy is, in the words of Gro\-then\-dieck (\emph{Esquisse d'un programme}, \cite{Gro}, footnote (3)) ``a construction which pretends to ignore [\dots] the algebraic equations which traditionally serve to describe schemes, [\dots] to be able to hope to reconstitute a scheme [\dots] from [\dots] a purely topological invariant [\dots]''.  
In the zero-dimensional case, the fundamental group plays no r\^ole, only the absolute Galois group, and we arrive at the theorem of Neukirch and Uchida (greatly generalized in recent years, notably by Bogomolov-Tschinkel \cite{BoTs}, Mochizuki \cite{Moch} and Pop \cite{Pop}, compare \cite{Sz}). 

Our main result indicates that QSM-systems for number fields can be considered as some kind of substitute for the absolute Galois group. The link to Grothendieck's proposal arises via a philosophy from noncommutative geometry that ``topology = $C^*$-algebra'' and ``time evolution = Frobenius''. This would become a genuine analogy if one could unearthen a ``Galois theory'' that describes a categorical equivalence between number fields on the one hand, and their QSM-systems on the other hand. Anyhow, it seems Theorem \ref{main} indicates that one may, in some sense, substitute ``noncommutative'' for ``anabelian''.\footnote{Interestingly, the Wikipedia entry for ``Anabelian geometry'' starts with ``Not to be confused with Noncommutative Geometry'' (retrieved 16 Aug 2010).} This substitution has an interesting side effect: in the spirit of Kronecker's programme, one wants to characterize a number field by structure that is ``internal'' to it (i.e., not using extensions): this is the case for the QSM-system, since class field theory realizes Kronecker's programme for abelian extensions. On the other hand, anabelian geometry characterizes a number field by its absolute Galois group, an object whose ``internal'' understanding remains largely elusive and belongs to the Langlands programme. 

In the style of Mochizuki's \emph{absolute} version of anabelian geometry (cf.\ \cite{Mo}), one may ask how to reconstruct a number field from its associated QSM-system (or $L$-series), rather than to reconstruct an isomorphism of fields from an isomorphism of QSM-systems (or an L-isomorphism). 

It would be interesting to study the analogue of our results for the case of function fields, and higher dimensional schemes. Jacob \cite{Jacob} and Consani-Marcolli \cite{ConsM} have constructed function field analogues of QSM systems that respectively have the Weil and the Goss zeta function as partition function. The paper \cite{CKZ} studies arithmetic equivalence of function fields using the Goss zeta function. 
\end{remark*}

\begin{remark*}[Link with hyperring theory]
Connes and Consani have studied the adele class space as a hyperring in the sense of Krasner (\cite{Krasner}). They prove in \cite{CC} (Theorem 3.13) that
\begin{quote} (v) [Hyperring isomorphism] \ \emph{the two adele class spaces  $\A_{\bK}\! /\! \bK^* \cong \A_{\bL}\! /\! \bL^*$ are isomorphic as hyperrings over the Krasner hyperfield;}
\end{quote} is equivalent to field isomorphism. The proof is very interesting: it uses classification results from incidence geometry.  One may try to prove that QSM-isomorphism implies hyperring isomorphism directly (thus providing a new proof of the equivalence of field isomorphism with QSM-isomorphism; this is especially tempting, since  Krasner developed his theory of hyperrings for applications to class field theory).   

Observe that the equivalence of hyperring isomorphism with field isomorphism is rather far from the anabelian philosophy (which would be to describe algebra by topology), since it uses (algebraic) isomorphism of hyperrings to deduce isomorphism of fields. But it might be true that the \emph{topology/geometry} of the hyperring can be used instead. As a hint, we refer to Theorem 7.12 in  \cite{CC}: over a global function field, the groupoid of prime elements of the hyperring of adele classes \emph{is} the abelianized loop groupoid of the curve, cf.\ also \cite{CC2}, Section 9.  
\end{remark*}

\begin{remark*}[Analogues in Riemannian geometry]
There is a well-known (limited) analogy between the theory of $L$-series in number theory and the theory of spectral zeta functions in Riemannian geometry. For example, the ideas of Ga{\ss}mann were used by Sunada to construct isospectral, non-isometric manifolds (cf.\ \cite{Sunada}): the spectral zeta function does not determine a Riemannian manifold up to isometry (actually, not even up to homeomorphism).  

In \cite{Riem}, it was proven that the isometry type of a closed Riemannian manifold is determined by a \emph{family} of Dirichlet series associated to the Laplace-Beltrami operator on the manifold. In \cite{CMRiem}, it was proven that one can reconstruct a compact hyperbolic Riemann surface from a suitable \emph{family} of Dirichlet series associated to a spectral triple. These can be considered as analogues in manifold theory of the equivalence of (i) and (iii).

One might consider as another analogy of (iii) the matching of all $L$-series of Riemannian coverings of two Riemannian manifolds, but this appears not to be entirely satisfactory; for example, there exist simply connected isospectral, non-isometric Riemannian manifolds (cf.\ Sch\"uth \cite{Schueth}).  

One may consider Mostow rigidity (a hyperbolic manifold of dimension at least three is determined by its fundamental group) as an analogue of the anabelian theorem. Again, this is very \emph{an}abelian, since the homology rarely determines a manifold. 

There is a further occurence of $L$-series in geometry (as was remarked to us by Atiyah): the Riemann zeta function is the only Dedekind zeta function that occurs as spectral zeta function of a manifold (namely, the circle); but more general $L$-series can be found in the geometry of the resolution of the cusps of a Hilbert modular variety (\cite{Atiyah}, compare \cite{Solv}), a kind of ``virtual manifold'' that also has a ``quotient structure'', just like the QSM-system algebra is a noncommutative  quotient space. 
\end{remark*}

\section*{Disambiguation of notations}

There will be one notational sloppiness throughout: we will denote maps that are induced by a given isomorphism $\varphi$ by the same letter $\varphi$. 

Since the number theory and QSM literature have conflicting standard notations, we include a table of notations for the convenience of the reader:

\medskip

\begin{footnotesize}

\noindent $R^*$ \dotfill invertible elements of a ring $R$\\
\noindent $R^\times$ \dotfill non-zero elements of a ring $R$\\

\noindent $\widehat{G}$ \dotfill Pontrjagin dual: continuous $\Hom(G,S^1)$ of a topological abelian group $G$ \\
\noindent $G^0$ \dotfill connected component of identity \\

\noindent $\bK, \bL, \bM, \bN$ (blackboard bold capitals) \dotfill number fields\\
\noindent $L_{\bK}(\chi,-)$ \dotfill $L$-series of field $\bK$ for generalized Dirichlet character $\chi\in \widehat G_{\bK}^{\ab}$\\

\noindent $J_{\bK}^+$ \dotfill semigroup of integral ideals of a number field $\bK$ \\
\noindent $N=N_{\bK}=N_{\Q}^{\bK}$ \dotfill the norm map  on ideals of the number field $\bK$ \\
\noindent $\fn,\p,\q$ (fraktur letters) \dotfill integral ideals of a number field \\

\noindent $\cO_{\bK}$ \dotfill ring of integers of a number field $\bK$\\
\noindent $\cO_{\bK,+}$ \dotfill semiring of totally positive integers of a number field $\bK$\\

\noindent $\hat\cO_{\bK,\p}$ \dotfill completed local ring of $\p$-adic integers in $\bK$\\
\noindent $\hat\cO_{\bK}$ \dotfill ring of finite integral adeles of a number field $\bK$\\

\noindent $\bar{\bK}_{\p}$ \dotfill residue field of $\bK$ at $\p$\\
\noindent ${\bK}_{\fn}$ \dotfill maximal abelian extension of $\bK$ unramified outside prime divisors of $\fn$\\

\noindent $f(\p|p)=f(\p|{\bK})$ \dotfill inertia degree of $\p$ over $p$, in $\bK$ \\ 
\noindent $\f_\chi$ \dotfill conductor of $\chi$ \\
\noindent $f_\chi$ \dotfill element of $A_{\bK}$ that implements the character $\chi$ \\
\noindent $f_{\chi,\fm}$ \dotfill generator of $C(X_{\bK})$ as in Lemma \ref{gengen} \\

\noindent $G_{\bK}$ \dotfill absolute Galois group of $\bK$ \\
\noindent $G_{\bK}^{\ab}$ \dotfill Galois group of maximal abelian extension of $\bK$ \\
\noindent $G_{\bK,\fn}^{\ab}$ \dotfill Galois group of maximal abelian extension of $\bK$ unramified at divisors of $\fn$ \\
\noindent $\mathring{G}_{\bK,\fn}^{\ab}$ \dotfill Galois group of maximal abelian extension of $\bK$ unramified outside divisors of $\fn$ \\

\noindent $\A_{\bK}$ \dotfill adele ring of a number field $\bK$\\
\noindent $\A_{\bK,f}$ \dotfill finite (non-archimedean) part of the adele ring of a number field $\bK$\\
\noindent $A_{\bK}$ \dotfill the $C^*$ algebra of the QSM-system of the number field $\bK$\\
\noindent $\vartheta_{\bK}$ \dotfill Artin reciprocity map $\A_{\bK}^* \rightarrow G_{\bK}^{\ab}$ \\

\noindent $\beta$ \dotfill positive real number representing ``inverse temperature''\\
\noindent $X_{\bK}$ \dotfill topological space of $[(\gamma,\rho)] \in G_{\bK}^{\ab} \times_{\hat\cO_{\bK}^*} \hat\cO_{\bK}$ underlying part of the algebra $A_{\bK}$ \\ 
\noindent $X^1_{\bK}$ \dotfill dense subspace of $[(\gamma,\rho)] \in X_{\bK}$ on which none of components of $\rho$ is zero \\ 
\noindent $\mu_{\fn}$ \dotfill element of the $C^*$-algebra $A_{\bK}$ corresponding to the ideal $\fn \in J_{\bK}^+$\\

\noindent $e_{\fn}$ \dotfill $=\mu_{\fn} \mu_{\fn}^*$, projector\\
\noindent $\epsilon_\gamma$ \dotfill symmetry of $A_{\bK}$ induced by multiplication, for $\gamma  \in G_{\bK}^{\ab}$, with $[(\gamma,1)]$ on $X_{\bK}$\\
\noindent $\varepsilon_s$ \dotfill endomorphism of $A_{\bK}$ given by $\varepsilon_s(f)(\gamma,\rho)=f(\gamma,s^{-1}\rho)e_{s\hat\cO_{\bK} \cap \bK}$ \\

\noindent $\rho$ \dotfill finite integral adele $\in \hat\cO_{\bK}$\\
\noindent $\rho_{\p}$ \dotfill $\p$-component of an adele $\rho$\\
\noindent $\rho_{\fn}(f)$ \dotfill action of ideal $\fn$ on $f \in C(X_{\bK})$ : $\rho_{\fn}(f)=f(\vartheta_{\bK}(\fn) \gamma, \fn^{-1} \rho)e_{\fn}$\\
\noindent $\fn \ast x$ \dotfill action of ideal $\fn$ on $x \in X_{\bK}$ : $\fn \ast [(\gamma,\rho)]=[(\vartheta_{\bK}(\fn)^{-1} \gamma, \fn \rho)]$\\
\noindent $\sigma_{\fn}(f)$ \dotfill partial inverse to $\rho_{\fn}$ : $\sigma_{\fn}(f)=f(\fn \ast \rho)$ \\ 
\noindent $\sigma_{\bK}=\sigma_t=\sigma_{\bK,t}$ \dotfill the time evolution (in time $t$) of the QSM-system of the number field $\bK$\\
\noindent $\rtimes$ \dotfill crossed product construction of $C^*$-algebras (not semidirect product of groups) \\

\noindent $\omega$ \dotfill a state of a $C^*$-algebra \\
\noindent $\omega_\beta$ \dotfill a $\KMS_\beta$ state of a $C^*$-algebra \\
\noindent $\pi_\omega$ \dotfill GNS-representation corresponding to $\omega$ \\
\noindent $\cM_\omega$ \dotfill weak closure of algebra in GNS-representation \\

\noindent $H$ \dotfill Hamiltonian\\
\noindent $\cH$ \dotfill Hilbert space \\

\noindent $\KMS_\beta(A,\sigma)$ \dotfill the set of $\KMS_\beta$-states of the QSM-system $(A,\sigma)$ \\
\noindent $\KMS_\beta(\bK)$ \dotfill $\KMS_\beta(A_{\bK},\sigma_{\bK})$ \\

\end{footnotesize}

\part{QSM-ISOMORPHISM OF NUMBER FIELDS}

\section{Isomorphism of QSM systems} \label{isom}

We recall some definitions and refer to \cite{BR}, \cite{CM2}, and Chapter 3 of \cite{CM}  for more information and for some physics background. After that, we introduce isomorphism of QSM-systems, and prove it preserves $\KMS$-states (cf.\ infra). 

\begin{df}
A \emph{quantum statistical mechanical system} (QSM-system) $(A,\sigma)$ is a (unital) $C^*$-algebra $A$ together with a so-called \emph{time evolution} $\sigma$, which is a continuous group homomorphism $$\sigma \, : \, \R \rightarrow \Aut A \, : \, t \mapsto \sigma_t.$$ 
A \emph{state} on $A$ is a continuous positive unital linear functional $ \omega \, : \, A \rightarrow \C$. We say $\omega$ is a \emph{KMS$_\beta$ state} for some $\beta \in \R_{>0}$ if for all $a,b \in A$, there exists a function $F_{a,b}$, holomorphic in the strip $0<\Im\, z<\beta$ and bounded continuous on its boundary, such that 
$$ F_{a,b}(t)=\omega(a\sigma_t(b)) \mbox{ and } F_{a,b}(t+i\beta)=\omega(\sigma_t(b)a) \ \ \ \ (\forall t \in \R). $$
Equivalently, $\omega$ is a $\sigma$-invariant state with $\omega(ab)=\omega(b\sigma_{i\beta}(a))$ for $a,b$ in a dense set of $\sigma$-analytic elements. The set $\KMS_\beta(A,\sigma)$ of $\KMS_\beta$ states is topologized as a subspace of the convex set of states, a weak* closed subset of the unit ball in the operator norm of bounded linear functionals on the algebra. 
A  $\KMS_\beta$ state is called \emph{extremal} if it is an extremal point in the (compact convex) set of $\KMS_\beta$ states for the weak (i.e., pointwise convergence) topology. 
\end{df}

\begin{remark}[Physical origins]
This notion of QSM-system is one of the possible physical theories of quantum statistical mechanics; one should think of  $A$ as the algebra of observables, represented on some Hilbert space $\cH$ with orthonormal basis $\{\Psi_i\}$; the time evolution, in
the given representation, is generated by a Hamiltonian $H$ by \begin{equation} \label{imp} \sigma_t(a) = e^{itH}ae^{-itH},\end{equation} and (mixed) states of the system are combinations $$a \mapsto \sum \lambda_i \langle \Psi_i | a \Psi_i \rangle$$ which will mostly be of the form $$a \mapsto \tr(\rho a)$$ for some density matrix $\rho$. A typical equilibrium  state (here, this means stable by time evolution) is a Gibbs state $$a \mapsto \tr(a e^{-\beta H})/\tr(e^{-\beta H})$$ at temperature $1/\beta$, where we have normalized by the partition function $$\tr(e^{-\beta H}).$$ The KMS-condition (named after Kubo, Martin and Schwinger) is a correct generalization of the notion of equilibrium state to more general situations, for example when the trace class condition $$\tr(e^{-\beta H})< \infty,$$
needed to
define Gibbs states, no longer holds (cf.\ Haag, Hugenholtz and Winnink \cite{HHW}).
\end{remark}

\begin{remark}[Semigroup crossed product]
We recall the construction of a \emph{semigroup crossed product algebra}.  A semigroup $C^*$-dynamical system is a triple
$(A,S,\rho)$ of a $C^*$-algebra $A$, a semigroup $S$ and
an action $\rho$ of $S$ by endomorphisms of $A$. A
covariant representation $(\pi,\mu)$ is a pair of a
representation $\pi$ of the $C^*$-algebra $A$ as
bounded operators on a Hilbert space $\cH$ and a
representation $\mu$ of the semigroup $S$ on $\cH$
by isometries, with the property that
$$ \pi(\rho_s(a)) = \mu_s \pi(a) \mu_s^* $$
for all $a\in A$ and $s\in S$. Then the crossed product
$C^*$-algebra $A\rtimes_\rho S$ is the universal
$C^*$-algebra such that each covariant representation
$(\pi,\mu)$ factors through a representation of  $A\rtimes_\rho S$.

The existence of $A\rtimes_\rho S$, with an embedding of $A$
in $A\rtimes_\rho S$, is guaranteed when
the semigroup $S$ is an Ore semigroup, namely it is
cancellative ($as=bs$ or $sa=sb$ implies $a=b$ in $S$)
and right-reversible ($Ss\cap St\neq \emptyset$ for all $s,t\in S$),
the action $\rho$ is by injective endomorphisms, which extend
continuously to the multiplier algebra $M(A)$ mapping the identity
to a projection.

Under these same hypotheses on the semigroup $S$ and the action
$\rho$, the algebra $A\rtimes_\rho S$ is the closure of the linear span
of all monomials of the form $\mu_s^* a \mu_t$, with $s,t \in S$ and
$a\in A$, where the $\mu_s$ here denote the isometries in
$A\rtimes_\rho S$ associated to elements $s\in S$. In particular,
the isometries $\mu_s$ satisfy $\mu_s \mu_t = \mu_{st}$ and
$\mu_s^* \mu_s=1$, while $\mu_s \mu_s^*$ is a projector.
One also has the relations $a \mu_s^* = \mu_s^* \rho_s(a)$ and
$\mu_s a = \rho_s(a) \mu_s$.

See \cite{Larsen} for a more detailed discussion of semigroup
crossed product algebras and their relation to partially defined
actions of the associated enveloping group $G =S^{-1} S$
(which exists and is unique up to canonical isomorphism in
the Ore case).
\end{remark}

\begin{df}
The \emph{dagger subalgebra} $B^\dagger$ of the semigroup crossed product $ B=A\rtimes_\rho S$ is the (non-involutive) subalgebra generated  algebraically by $A$ and and $\mu_t$ for $t \in S$ (but not including the $\mu_t^*$). 
\end{df}

What we call ``dagger subalgebra'' (and its closure) can be seen as a noncommutative analogue of the disc algebra; its study was initiated by Arveson and Josephson, for references see, e.g., \cite{Davidson}, \cite{Power}. 

\medskip 

We now introduce the following equivalence relation for QSM-systems: 

\begin{df} \label{dfQSMiso}
An \emph{isomorphism} of two QSM-systems $(A,\sigma)$ and $(B,\tau)$ is a $C^*$-algebra isomorphism $\varphi: A \isomto B$ that intertwines time evolutions, i.e., such that the following diagram commutes:
$$\xymatrix{ A \ar@{->}[r]^{\varphi}_{\sim} \ar@{->}[d]_{\sigma} & B \ar@{->}[d]^{\tau}  \\ 
A \ar@{->}[r]^{\varphi}_{\sim} & B  
     }$$
\end{df}

\begin{df} \label{dfdaggeriso}
If $(A,\sigma)$ and $(B,\tau)$ are two QSM-systems with given dagger-subalgebras $A^\dagger \subseteq A$ and $B^\dagger \subseteq B$ that are preserved by the respective time evolutions (i.e., $\sigma(A^\dagger) \subseteq A^\dagger$ and $\tau(B^\dagger) \subseteq B^\dagger$), then we call an isomorphism $\varphi$ of the two systems a \emph{dagger-isomorphism} if $\varphi(A^\dagger)=B^\dagger$. \end{df}

\begin{lem} \label{basic} Let $\varphi :  (A,\sigma) \isomto (B,\tau)$ denote an isomorphism of QSM systems. Then for any $\beta>0$, \begin{enumerate}
\item[\textup{(}i{)}] pullback $$\varphi^\ast \, : \, \KMS_\beta(B,\tau) \isomto \KMS_\beta(A,\sigma) \, : \,  \omega \mapsto \omega \circ \varphi $$
 is a homeomorphism between the spaces of $\KMS_\beta$ 
states on $B$ and $A$;
 \item[\textup{(}ii{)}]$\varphi^\ast$ induces a homeomorphism between extremal $\KMS_\beta$ states on $B$ and $A$. 
\end{enumerate}
\end{lem}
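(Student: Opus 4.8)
The plan is to check directly that $\varphi^\ast$ is a well-defined affine bijection whose inverse is again a pullback, and that both $\varphi^\ast$ and its inverse are weak*-continuous; the statement about extremal states will then follow formally from affineness. So the four steps are: (1) $\omega\circ\varphi$ is again a $\KMS_\beta$ state; (2) $(\varphi^{-1})^\ast$ is a two-sided inverse of $\varphi^\ast$; (3) $\varphi^\ast$ and $(\varphi^\ast)^{-1}$ are continuous for the weak* topology; (4) affineness transports extreme points.

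First I would establish (1). Since $\varphi\colon A\to B$ is a unital $\ast$-isomorphism of $C^\ast$-algebras it is automatically isometric and positive, so $\omega\circ\varphi$ is a continuous positive unital functional on $A$ whenever $\omega$ is a state on $B$. If moreover $\omega\in\KMS_\beta(B,\tau)$, fix $a,b\in A$ and let $G_{\varphi(a),\varphi(b)}$ be the function holomorphic on $0<\Im z<\beta$ and bounded continuous on the closed strip provided by the KMS condition for $\omega$ relative to $\tau$; set $F_{a,b}:=G_{\varphi(a),\varphi(b)}$. Using multiplicativity of $\varphi$ together with the intertwining relation $\varphi\circ\sigma_t=\tau_t\circ\varphi$ one computes
$$F_{a,b}(t)=\omega(\varphi(a)\,\tau_t(\varphi(b)))=\omega(\varphi(a\,\sigma_t(b)))=(\omega\circ\varphi)(a\,\sigma_t(b)),$$
and likewise $F_{a,b}(t+i\beta)=(\omega\circ\varphi)(\sigma_t(b)\,a)$, so $\omega\circ\varphi\in\KMS_\beta(A,\sigma)$. (Equivalently, via the second formulation in the definition: $\varphi$ carries the $\sigma$-analytic elements of $A$ bijectively onto the $\tau$-analytic elements of $B$ and intertwines $\sigma_{i\beta}$ with $\tau_{i\beta}$, so $\omega(ab)=\omega(b\,\sigma_{i\beta}(a))$ transfers verbatim, while $\sigma$-invariance of $\omega\circ\varphi$ is immediate from $\tau$-invariance of $\omega$.)

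For (2) and (3): because $\varphi^{-1}\colon(B,\tau)\isomto(A,\sigma)$ is again an isomorphism of QSM-systems, Step (1) applied to it gives $(\varphi^{-1})^\ast\colon\KMS_\beta(A,\sigma)\to\KMS_\beta(B,\tau)$, and $(\omega\circ\varphi)\circ\varphi^{-1}=\omega$ together with the symmetric identity shows $(\varphi^{-1})^\ast$ is inverse to $\varphi^\ast$. Continuity is immediate from the definition of the weak* topology on the KMS spaces: if $\omega_\lambda\to\omega$ then for each $a\in A$ one has $(\varphi^\ast\omega_\lambda)(a)=\omega_\lambda(\varphi(a))\to\omega(\varphi(a))=(\varphi^\ast\omega)(a)$, and the same argument applies to $(\varphi^\ast)^{-1}$; hence $\varphi^\ast$ is a homeomorphism, proving (i). Finally $\varphi^\ast$ is affine, $\varphi^\ast(t\omega_1+(1-t)\omega_2)=t\,\varphi^\ast\omega_1+(1-t)\,\varphi^\ast\omega_2$, and an affine bijection with affine inverse between compact convex sets restricts to a bijection of the sets of extreme points; by (i) this restriction is again a homeomorphism for the weak topology, which is (ii).

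I do not expect a genuine obstacle: the lemma is a routine transport of structure. The only two points that need a moment of care are that $\ast$-isomorphisms of $C^\ast$-algebras are automatically isometric — this is exactly what lets the holomorphicity and boundedness of $F_{a,b}$ on the strip be inherited from $G_{\varphi(a),\varphi(b)}$ — and that it is precisely the intertwining $\varphi\circ\sigma_t=\tau_t\circ\varphi$, and not merely the algebra isomorphism, that makes the two boundary identities for $F_{a,b}$ come out correctly.
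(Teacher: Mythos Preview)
Your proof is correct. Part (i) is essentially identical to the paper's argument: both transport the analytic strip function via $F_{a,b}:=G_{\varphi(a),\varphi(b)}$ and use the intertwining $\varphi\circ\sigma_t=\tau_t\circ\varphi$ to check the boundary identities, then note weak*-continuity of pullback.

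Part (ii) differs. You observe that $\varphi^\ast$ is an affine bijection between the convex sets $\KMS_\beta(B,\tau)$ and $\KMS_\beta(A,\sigma)$, with affine inverse $(\varphi^{-1})^\ast$, and conclude formally that extreme points correspond. The paper instead argues via GNS representations: if $\omega$ is not extremal then $\pi_\omega$ is not factorial, so there is a positive central element $h\in\cM_\omega$ giving a dominated functional $\omega_1(b)=\omega(hb)$; one then pulls back the resulting convex decomposition through $\varphi$ and checks separately that the pieces are again KMS states. Your route is more elementary and shorter---once (i) is established, affineness of $\omega\mapsto\omega\circ\varphi$ is immediate from linearity, and the extreme-point statement is pure convexity. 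The paper's route, by contrast, exhibits the decomposition concretely through the center of the enveloping von Neumann algebra, which connects to the factoriality characterisation of extremal KMS states (cf.\ \cite{BR}) but is not needed for the bare statement of the lemma.
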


\begin{proof}
The map $\varphi$ obviously induces a bijection between states on $B$ and states on $A$. 

For (i), let $F_{a,b}$ be the holomorphic function that implements the $\KMS_\beta$-condition for the state $\omega$ on $(B,\tau)$ at  $a,b \in B$, so $$F_{a,b}(t)=\omega(a \tau_t(b)) \mbox{ and } F_{a,b}(t+i\beta)=\omega(\tau_t(b)a).$$ The following direct computation then shows that the function $F_{\varphi(c),\varphi(d)}$ implements the $\KMS_\beta$-condition for the state $\varphi^\ast \omega$ on $(A,\sigma)$ at  $c,d \in A$:
$$(\omega \circ \varphi)(c \sigma_t(d)) = \omega ( \varphi(c) \tau_t (\varphi(d)) = F_{\varphi(c),\varphi(d)}(t),$$ and similarly at $t+i\beta$. Also, note that pullback is continuous, since $C^*$-algebra isomorphism is compatible with the topology on the set of $\KMS$-states. 

For (ii), if a $\KMS_\beta$ state $\omega$ on $B$ is not extremal, then the GNS-representation $\pi_\omega$ of $\omega$ is not factorial.
As in Prop 3.8 of \cite{CM2}, there exists a positive linear functional, which is dominated by $\omega$, namely $\omega_1 \leq \omega$, and which extends from $B$ to the von Neumann algebra given by the weak closure $\cM_\omega$ of $B$ in the GNS representation. The functional $\omega_1$ is of the 
form $\omega_1 (b)=\omega(hb)$ for some positive element $h$ in the center of the von Neumann algebra $\cM_\omega$. Consider then the pull back $$\varphi^*(\omega)(a)=\omega(\varphi(a))$$ and
$$\varphi^*(\omega_1)(a)=\omega_1(\varphi(a)) =\omega(h \varphi(a))$$ for $a \in A$. The continuous linear 
functional $\varphi^*(\omega_1)$ has norm $\| \varphi^*(\omega_1)\| \leq 1$. In fact, since we are dealing with unital algebras, $$\| \varphi^*(\omega_1)\| =\varphi^*(\omega_1)(1)=\omega(h).$$
The linear functional $\omega_2(b)=\omega((1-h)b)$ also satisfies the positivity property
$\omega_2(b^* b)\geq 0$, since $\omega_1 \leq \omega$. The decomposition 
$$\varphi^*(\omega) = \lambda \eta_1 + (1-\lambda) \eta_2,$$
with $\lambda = \omega(h)$, $$\eta_1=\varphi^*(\omega_1)/\omega(h)\mbox{ and }\eta_2= \varphi^*(\omega_2)/\omega(1-h)$$ shows that the state $\varphi^*(\omega)$ is
not extremal. Notice that $\eta_1$ and $\eta_2$ are both $\KMS$ states. To see this, it suffices
to check that the state $\omega_1(b)/\omega(h)$ is $\KMS$. In fact, one has for all analytic elements $a,b \in B$:
$$\omega_1(ab) = \omega(hab) =
\omega(a h b) = \omega(hb\tau_{i\beta}(a)).$$ 
\end{proof}

\begin{df} An \emph{automorphism} of a QSM-system $(A,\sigma)$ is an isomorphism to itself. The group of such automorphisms is denoted by $\Aut(A,\sigma)$. 

An \emph{endomorphism} of a QSM-system $(A,\sigma)$ is a $\ast$-homomorphism $A \rightarrow A$ that commutes with $\sigma_t$ for all $t$. We denote them by $\End(A,\sigma)$. 

An \emph{inner endomorphism} is defined by $a \mapsto uau^*$ for some isometry $u \in A$ which is an eigenvector of the time evolution, i.e., $u^*u=1$ and there exists an eigenvalue $\lambda$ such that $\sigma_t(u)=\lambda^{it} u$ for all $t$. We denote them by $\mathrm{Inn}(A,\sigma)$. (Inner endomorphisms act trivially on $\KMS$-states, cf.\ \cite{CM}, Ch.\ 3, Section 2.3.)

If $A^\dagger \subset A$ is a dagger-subalgebra preserved by the time
evolution, we denote by $\mathrm{Inn}^{\dagger}(A,\sigma)$ the set of \emph{dagger inner endomorphisms}: the inner endomorphisms
of $(A,\sigma)$ defined by isometries in $A^{\dagger}$ that are eigenvectors of the time evolution.
\end{df}

\section{A QSM-system for number fields} \label{s2}

Bost and Connes (\cite{BC}) introduced a QSM-system for the field of rational numbers, and \cite{CMR},
\cite{CMR2} did so for imaginary quadratic fields. More general QSM-systems associated to arbitrary number fields were constructed by Ha and Paugam in \cite{HP} as a special case of their more
general class of systems for Shimura varieties, which in turn generalize the $\GL(2)$-system
of \cite{CM2}. We recall here briefly the construction of the systems for number fields in an
equivalent formulation (cf.\ also \cite{LLN}). 

\begin{se}We denote by $J^+_{\bK}$ the semigroup of integral ideals, with the norm function
$$N \, : \, J_{\bK}^+ \rightarrow \Z \, : \, \fn \mapsto N(\fn)=N^{\bK}_{\Q}(\fn)=N_{\bK}(\fn).$$ Denote by $G^{\ab}_{\bK}$ the Galois group of the
maximal abelian extension of $\bK$. The Artin reciprocity map is denote by 
$$ \vartheta_{\bK} \, : \, \A_{\bK}^* \rightarrow G_{\bK}^{\ab}.$$
By abuse of notation, we will also write $\vartheta_{\bK}(\fn)$ for the image under this map of an ideal $\fn$, which is seen as an idele by choosing a non-canonical section $s$ of 
$$\xymatrix@R=0pt{ \A_{\bK,f}^* \ar@{->>}[r] & J_{\bK} \ar@/^1.5pc/[l]_{s} &  :  & (x_{\p})_{\p} \mapsto \displaystyle\prod\limits_{\p \mbox{\footnotesize{ finite }}}  \p^{v_{\p}(x_{\p})} }$$ 

The abuse lies in the fact that the image depends on this choice of section (thus, up to a unit in the finite ideles), but it is canonically defined in (every quotient of) the Galois group $G_{\bK,\fn}^{\ab}$ of the maximal abelian extension unramified at prime divisors of $\fn$: on every finite quotient of this, it is the ``Frobenius element'' of $\fn$. The notation $\vartheta_{\bK}(\fn)$ will only occur in situations where this ambiguity plays no role, for example, we evaluate characters $\chi$ on $\vartheta_{\bK}(\fn)$ only if the conductor $\f_\chi$ of $\chi$ is coprime to $\fn$ (so $\chi$ factors over $G_{\bK,\fn}^{\ab}$). If $\fn=\p$ is a prime ideal with a chosen uniformizer $\pi_{\p}$ then we get a diagram 

$$\xymatrix@R=0pt{ J_{\bK}^+ \ar@{->}[r]^{s} \ar@/^1.5pc/[rrr] & \A_{\bK}^* \ar@{->>}[r]^{\vartheta_{\bK}}   & G_{\bK}^{\ab} \ar@{->>}[r] & G_{\bK,\p}^{\ab}  \\ 
\p \ar@{->}[r]  & (1,\dots,1,\pi_{\p},1,\dots,1)\ar@{->}[rr] & & \vartheta_{\bK}(\p)     }$$
in which the arrow $\vartheta_{\bK} \circ s$ depends on $s$, but the curved arrow doesn't depend on $s$.

We consider the fibered product $$ X_{\bK}:=G^{\ab}_{\bK}\times_{\hat\cO_{\bK}^*} \hat\cO_{\bK},$$
(where $\hat\cO_{\bK}$ is the ring of finite integral adeles), where the balancing is defined for $\gamma \in G_{\bK}^{\ab}$ and $\rho \in \hat{\cO}_{\bK}$ by the equivalence $$(\gamma,\rho) \sim (\vartheta_{\bK}({u}^{-1}) \cdot \gamma, u \rho) \mbox{ for all } u \in \hat\cO_{\bK}^*.$$
\end{se}

\begin{df} \label{dfsystem}
The \emph{QSM-system $(A_{\bK}, \sigma_{\bK})$ associated to a number field $\bK$} is defined by
\begin{equation}\label{QSMnf}
A_{\bK}:=C(X_{\bK}) \rtimes J^+_{\bK} =  C(G^{\ab}_{\bK}\times_{\hat\cO_{\bK}^*} \hat\cO_{\bK})\rtimes J^+_{\bK},
\end{equation}
where the crossed product structure is given by $\fn \in J_{\bK}^+$ acting on $f \in C(X_{\bK})$ as
$$ \rho_{\fn}(f)(\gamma,\rho)=f(\vartheta_{\bK}(\fn) \gamma, s(\fn)^{-1} \rho)e_{\fn},  $$
with $e_{\fn}=\mu_{\fn} \mu_{\fn}^*$ the projector onto the space of $[(\gamma,\rho)]$ where $s(\fn)^{-1}\rho \in \hat\cO_{\bK}$. Here $\mu_{\fn}$ is the isometry that implements the action of $J_{\bK}^+$. 

Note that, because of the balancing over the finite idelic units $\hat \cO^*_{\bK}$, the dependence of $\vartheta_{\bK}(\fn)$ on $s$ is again of no influence. By further slight abuse of notation, we will leave out the section $s$ from the notation, and write the action as $f \mapsto f(\vartheta_{\bK}(\fn) \gamma, \fn^{-1} \rho)e_{\fn}$.

Of further use to us will be the partial inverse to this action defined by 
$$ \sigma_{\fn}(f)(x) =f(\fn \ast x)$$
where we have defined the action $\fn \ast x$ of an ideal $\fn \in J_{\bK}^+$ on an element $x \in X_{\bK}$ as 
$$ \fn \ast [(\gamma,\rho)]=[(\vartheta_{\bK}(\fn)^{-1} \gamma, \fn \rho)].$$
Then indeed, 
$$ \mu_{\fn} \mu^*_{\fn} =e_{\fn}; \ \mu_{\fn}^* \mu_{\fn} = 1; \ \rho_{\fn}(f) = \mu_{\fn} f \mu_{\fn}^*; $$ $$ \sigma_{\fn}(f)=\mu_{\fn}^* f \mu_{\fn};\ \sigma_{\fn}(\rho_{\fn}(f)) = f; \ \rho_{\fn}(\sigma_{\fn}(f))=fe_{\fn}. $$

The dagger subalgebra $A_{\bK}^{\dagger}$ is the algebraic crossed product 
generated by functions $f\in C(X_{\bK})$ and isometries $\mu_{\fn}$ with
the relations 
\begin{equation}\label{relsmunuialg}
 \mu_{\fn} f = \rho_{\fn}(f) \mu_{\fn}, \ \ \ \   f \mu_{\fn} = \mu_{\fn} \sigma_{\fn}(f) e_{\fn}, 
 \end{equation}
where $\rho_{\fn}$ and $\sigma_{\fn}$ are as in Section \ref{dfsystem}. 
This is not an involutive subalgebra because it does not contain the adjoints
$\mu_{\fn}^*$, but $A_{\bK}$ is the $C^*$-algebra generated by $A_{\bK}^{\dagger}$.

Finally, the time evolution
is given by
\begin{equation}\label{sigmaK}
\left\{ \begin{array}{ll} \sigma_{\bK,t}(f) =f, & \forall f \in C(G^{\ab}_{\bK}\times_{\hat\cO_{\bK}^*} \hat\cO_{\bK}); \\ \sigma_{\bK,t}(\mu_{\fn}) = N(\fn)^{it}\, \mu_{\fn}, & \forall \fn \in  J^+_{\bK}. \end{array} \right.
\end{equation}
where $\mu_{\fn}$ are the isometries that implement the semigroup action of $J^+_{\bK}$.
The time evolution preserves the dagger subalgebra $A_{\bK}^{\dagger}$.
\end{df}

\section{Hilbert space representation, partition function, KMS-states}

\begin{se}\label{kmschar} A complete classification of the $\KMS$ states for the systems $(A_{\bK},\sigma_{\bK})$ was obtained in \cite{LLN}, Thm.\ 2.1.
In particular, in the low temperature range $\beta > 1$, the extremal $\KMS_\beta$ states are parameterized 
by elements $\gamma \in G_{\bK}^{\ab}$, and are in Gibbs form, given by normalized $L$-series
\begin{equation}\label{KMSlow}
\omega_{\beta,\gamma} (f) = \frac{1}{\zeta_{\bK}(\beta)} \sum_{\fn \in J_{\bK}^+} \frac{f(\fn \ast \gamma)}{ 
N(\fn)^{\beta}}.
\end{equation}
Let $\chi$ denote a character of $G_{\bK}^{\ab}$ (extended as usual by $0$ on ideals not coprime to its conductor $\f_\chi$). We define a function $f_\chi \in C(X_{\bK})$ by 
\begin{equation} \label{fchi1} f_\chi(\gamma,\rho):= \left\{ \begin{array}{ll} \chi^{-1}(\gamma \vartheta_{\bK}(\rho')) & \mbox{ if } \forall v \mid \f_\chi, \rho_v \in \hat\cO_{\bK,v}^*\\ 0 & \mbox{ otherwise, } \end{array} \right. \end{equation}
with $\rho' \in \hat\cO_{\bK}^*$ any invertible integral idele such that $\rho'_v=\rho_v$ for all $v \mid \f_\chi$ (the value is independent of this choice). 
Then from the definition we get \begin{equation} \label{fchi2} f_{\chi}(\fn \ast \gamma) = \left\{ \begin{array}{ll} \chi(\vartheta_{\bK}(\fn)) \chi^{-1}(\gamma) & \mbox{ if } (\fn;\f_\chi)=1, \\ 0 & \mbox{ otherwise, } \end{array} \right.\end{equation} 
so that
\begin{equation}\label{KMSlowa}
\omega_{\beta,\gamma} (f_\chi) = \frac{1}{\zeta_{\bK}(\beta)\chi(\gamma)} \cdot L_{\bK}(\chi,\beta), \end{equation}
is up to normalization the usual $L$-series of $\chi$ (which is defined using the convention to sum only over ideals coprime to the conductor of the $\chi$). 
\end{se}

\begin{se}\label{hil}

Associated to any element 
$\gamma \in G_{\bK}^{\ab}$ is a natural representation $\pi_{\gamma}$ of the algebra $A_{\bK}$ on the Hilbert space $\ell^2(J_{\bK}^+)$. Namely, let $\varepsilon_{\fm}$ denote the canonical basis of $\ell^2(J_{\bK}^+)$. 
Then the action on $\ell^2(J_{\bK}^+)$ of an element
$ f_{\fn} \mu_{\fn} \in A_{\bK}$ with $\fn \in J_{\bK}^+$ and $f_{\fn} \in C(X_{\bK})$ is given by
$$ \pi_\gamma(f_{\fn} \mu_{\fn}) \,\, \varepsilon_{\fm} = f_{\fn}(\fn \fm \ast \gamma)
\, \varepsilon_{\fn \fm}. $$
In this picture, the time evolution is implemented (in the sense of formula (\ref{imp})) by a Hamiltonian 
\begin{equation}\label{HamK}
H_{\sigma_{\bK}} \varepsilon_{\fn} = \log N(\fn) \,\, \varepsilon_{\fn}.
\end{equation}
\end{se} 

\begin{se} 
In this representation,
$$ \tr( \pi_\gamma(f) e^{-\beta H_{\sigma_{\bK}}}) = \sum_{\fn \in J_{\bK}^+} \frac{f(\fn \ast \gamma)}{
N(\fn)^{\beta}}. $$
Setting $f=1$, the Dedekind zeta function $$\zeta_{\bK}(\beta) =\sum\limits_{\fn\in J_{\bK}^+} N(\fn)^{-\beta}$$ appears as the partition function $$\zeta_{\bK}(\beta) =\tr( e^{-\beta H_{\sigma_{\bK}}})$$ of the system (convergent for $\beta>1$). 
\end{se}

\begin{remark}[Formulation in terms of $\bK$-lattices] As shown in \cite{CM2} and \cite{CM}, the original Bost--Connes system admits a geometric reformulation
in terms of commensurability classes of one-di\-men\-si\-o\-nal $\Q$-lattices, which in Section 3 of \cite{LLN} was generalized to number fields. More specifically,  the moduli space of $\bK$-lattices up to scaling is the abelian part $C(X_{\bK})$ of the algebra (a classical quotient), and the moduli space up to scaling \emph{and} commensurability exhibit the complete algebra (a genuinely noncommutative space). We recall the definitions for convenience.

Denote by $\bK_\infty=\prod_{v|\infty} \hat{\bK}_v$ the product
of the completions at the archimedean places, and by $(\bK_\infty^*)^0$ the connected
component of the identity in $\bK^*_\infty$. An \emph{$1$-dimensional $\bK$-lattice} is a pair $(\Lambda, \phi)$, where $\Lambda\subset \bK_\infty$
is a lattice with $\cO_{\bK}\Lambda = \Lambda$ and $\phi: \bK/\cO_{\bK} \to \bK \Lambda/\Lambda$
is an $\cO_{\bK}$-module homomorphism. The set of one-di\-men\-si\-o\-nal $\bK$-lattices can be identified 
with
\begin{equation}\label{modspaceKlatt}
\cM_{\bK,1} =   \A_{\bK}^* / \bK^* \times_{\hat\cO^*_{\bK}} \hat\cO_{\bK}, 
\end{equation}
as in \cite{CMR} and \cite{ConsM}, cf.\ \cite{LLN} Lemma 3.3.  Two $\bK$-lattices are \emph{commensurable}, denoted by $$(\Lambda_1,\phi_1)\sim (\Lambda_2,\phi_2),$$
if $\bK \Lambda_1=\bK \Lambda_2$ and $\phi_1 =\phi_2$ modulo $\Lambda_1+\Lambda_2$.

The \emph{scaling equivalence} corresponds to identifying one-dimensional
$\bK$-lattices $(\Lambda,\phi)$ and $(k\Lambda, k\psi)$, where $k\in (\bK_{\infty}^*)^0$
and $\psi$ is a pointwise limit 
of elements $r \phi$ with $r \in \cO_{\bK}^* \cap (\bK_\infty^*)^0$. The resulting convolution
algebra corresponds to the action of $\A_{\bK,f}^*/\hat \cO^*_{\bK} \simeq J_{\bK}$ on the \emph{moduli space of one-dimensional $\bK$-lattices up to scaling}
$$ \bar{\cM}_{\bK,1} = \A_{\bK}^*/\overline{\bK^* (\bK_\infty^*)^0} \times_{\hat\cO^*_{\bK}} \hat\cO_{\bK} \simeq 
G^{ab}_{\bK} \times_{\hat\cO^*_{\bK}} \hat\cO_{\bK}. $$

The algebra $A_{\bK}$ can be interpreted as the quotient of the groupoid of the commensurability relation by the scaling action. The Hilbert space construction can be fit into the general framework of groupoid algebra representations. 

In the lattice picture, the low temperature KMS states are parameterized by the \emph{invertible} one-dimensional
$\bK$-lattices, namely those for which the $\cO_{\bK}$-module homomorphism $\varphi$ is actually an isomorphism, see \cite{CM}, \cite{CMR}, \cite{LLN}, and Chapter 3 of \cite{CM2}.
\end{remark}

\section{Hamiltonians and arithmetic equivalence}

We first show that the existence of an isomorphism of the quantum statistical mechanical
systems implies arithmetic equivalence; this is basically because the zeta functions of $\bK$ and $\bL$ are the partition functions of the respective systems. Some care has to be taken since the systems are not represented on the same Hilbert space. 

\begin{prop}\label{isotoareq}
Let $\varphi: (A_{\bK},\sigma_{\bK}) \to (A_{\bL},\sigma_{\bL})$ be an isomorphism of QSM-systems of number fields $\bK$ and $\bL$. Then $\bK$ and $\bL$ are arithmetically equivalent, i.e., they have the same Dedekind zeta function.
\end{prop}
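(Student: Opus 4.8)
The plan is to compare the two QSM-systems via their \(\KMS_\beta\)-states and extract the zeta functions as partition functions. Concretely, fix \(\beta>1\), where by \S\ref{kmschar} (the Laca--Larsen--Neshveyev classification) the extremal \(\KMS_\beta\)-states of each system are parametrized by \(G^{\ab}_\bK\) (resp.\ \(G^{\ab}_\bL\)) and are of Gibbs form, the Hamiltonian being \(H_{\sigma_\bK}\varepsilon_{\fn}=\log N(\fn)\,\varepsilon_{\fn}\) on \(\ell^2(J^+_\bK)\), with partition function \(\tr(e^{-\beta H_{\sigma_\bK}})=\zeta_\bK(\beta)\). By Lemma \ref{basic}, the isomorphism \(\varphi\) induces, for every \(\beta>0\), a homeomorphism \(\varphi^\ast\colon \KMS_\beta(\bL)\isomto\KMS_\beta(\bK)\) taking extremal states to extremal states. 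So I would first pick any extremal \(\KMS_\beta\)-state \(\omega_{\beta,\gamma'}\) on \((A_\bL,\sigma_\bL)\), and consider its pullback \(\omega:=\varphi^\ast\omega_{\beta,\gamma'}\), an extremal \(\KMS_\beta\)-state on \((A_\bK,\sigma_\bK)\), hence \(\omega=\omega_{\beta,\gamma}\) for some \(\gamma\in G^{\ab}_\bK\).

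Next I would realize both states via their GNS representations and identify the partition function intrinsically. The subtlety flagged in the remark before the proposition is that the two systems are a priori represented on different Hilbert spaces \(\ell^2(J^+_\bK)\) and \(\ell^2(J^+_\bL)\), so one cannot literally equate traces of \(e^{-\beta H}\). The way around this: the value \(\zeta_\bK(\beta)^{-1}\) is recovered from the state \(\omega_{\beta,\gamma}\) as \(\omega_{\beta,\gamma}(\one)\)-type normalization — more precisely, apply \(\omega_{\beta,\gamma}\) to an appropriate element (e.g.\ a projection \(e_{\fn}\), or the element \(f_{\fn}\mu_{\fn}\mu_{\fn}^\ast\)) and use formula (\ref{KMSlow}) to read off the Dirichlet-series coefficients. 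Since \(\varphi\) intertwines the time evolutions, it carries \(\sigma_\bK\)-eigenvectors to \(\sigma_\bL\)-eigenvectors with the same eigenvalue; in particular an eigenvector of \(\sigma_{\bK}\) with eigenvalue \(N(\fn)^{it}\) — such as \(\mu_{\fn}\) — must map to an eigenvector of \(\sigma_\bL\) with the same eigenvalue, forcing the multiset of eigenvalue-logarithms \(\{\log N(\fn):\fn\in J^+_\bK\}\) (with multiplicities given by the dimensions of the eigenspaces) to agree with \(\{\log N(\fm):\fm\in J^+_\bL\}\). That multiset-with-multiplicities is exactly the Dirichlet spectrum of the Hamiltonian, and its generating function is the partition function; hence \(\zeta_\bK(\beta)=\zeta_\bL(\beta)\) for all \(\beta>1\), and then for all \(s\) by analytic continuation.

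Alternatively — and this is probably cleanest to write — I would avoid eigenspace-counting and instead argue directly: evaluate the identity \(\omega(c\,\sigma_t(d))=\omega_{\beta,\gamma'}(\varphi(c)\,\tau_t(\varphi(d)))\) and its analytic continuation at well-chosen elements to show that the normalized \(L\)-type sums in (\ref{KMSlow}) for \(\bK\) at \(\gamma\) and for \(\bL\) at \(\gamma'\) coincide after clearing normalizations, and in particular, taking the ``constant'' coefficient, that \(1/\zeta_\bK(\beta)=1/\zeta_\bL(\beta)\). The main obstacle is precisely this bookkeeping across distinct GNS/Hilbert-space representations: one must make sure the normalization factor \(\zeta_\bK(\beta)\) in (\ref{KMSlowa})--(\ref{KMSlow}) is extracted in a representation-independent way (e.g.\ as a ratio of state-values, which \(\varphi^\ast\) preserves on the nose), rather than as a trace that depends on the chosen representation. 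Once that is handled, equality of the partition functions for all \(\beta>1\) gives equality of Dedekind zeta functions, i.e.\ arithmetic equivalence, as claimed.
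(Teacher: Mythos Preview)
Your setup is right --- pull back an extremal low-temperature $\KMS_\beta$-state along $\varphi^*$, and try to compare partition functions --- but neither of your two proposed resolutions of the ``different Hilbert spaces'' problem actually works as written.

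The eigenspace-counting argument is wrong. The $\sigma_{\bK,t}$-eigenspace in $A_{\bK}$ for the eigenvalue $n^{it}$ is \emph{not} finite-dimensional: it contains all monomials $\mu_{\fn}^* f \mu_{\fm}$ with $f\in C(X_{\bK})$ and $N(\fm)/N(\fn)=n$, hence is infinite-dimensional whenever nonzero. So its ``dimension'' does not recover the number of ideals of norm $n$, and you cannot read off the Dirichlet spectrum of $H_{\sigma_{\bK}}$ from the algebra eigenspaces. (Note also that the proposition assumes only a QSM-isomorphism, not a dagger-isomorphism, so you cannot restrict to the dagger subalgebra to cut down these eigenspaces.)

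The second approach is closer in spirit, but the sentence ``extract $1/\zeta_{\bK}(\beta)$ as a ratio of state-values'' is where the whole difficulty lives, and you do not carry it out. The normalization $\zeta_{\bK}(\beta)$ is precisely what makes $\omega_{\beta,\gamma}$ a state, so it is absorbed: $\omega_{\beta,\gamma}(1)=1$, and for instance $\omega_{\beta,\gamma}(e_{\fn})=N(\fn)^{-\beta}$, with no $\zeta_{\bK}$ visible. Without knowing where $\varphi$ sends specific elements such as $f_\chi$ or $e_{\fn}$, you cannot compare the unnormalized sums.

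The paper closes this gap by working in the GNS representation of a low-temperature extremal state $\omega$. The weak closure $\cM_\omega$ is a type I$_\infty$ factor, and the Hamiltonian implementing $\sigma_{\bL}$ in the GNS representation agrees with $H_{\sigma_{\bL}}$ up to an additive constant $\log\lambda_1$; similarly on the $\bK$-side. Since $\varphi$ is a $*$-isomorphism intertwining the time evolutions, it induces a unitary between the GNS Hilbert spaces that conjugates one GNS-Hamiltonian into the other. Hence $H_{\sigma_{\bK}}$ and $H_{\sigma_{\bL}}$ are unitarily equivalent up to a shift $\log\lambda$, giving $\zeta_{\bL}(\beta)=\lambda^{-\beta}\zeta_{\bK}(\beta)$. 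The final step --- which you are also missing --- is to kill the constant: both zeta functions are Dirichlet series with leading coefficient $1$, so letting $\beta\to+\infty$ forces $\lambda=1$.
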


\begin{proof}  The isomorphism $\varphi: (A_{\bK},\sigma_{\bK}) \to (A_{\bL}, \sigma_{\bL})$ 
induces an identification of the sets of extremal $\KMS$-states of the two systems, via pullback
$\varphi^*: \KMS_\beta(\bL) \to \KMS_\beta(\bK)$. 

Consider the GNS representations associated to regular low temperature 
$\KMS$ states $\omega=\omega_\beta$ and $\varphi^*(\omega)$.  We denote the
respective Hilbert spaces by ${\mathcal H}_\omega$ and 
${\mathcal H}_{\varphi^*\omega}$. As in Lemma 4.3 of \cite{CCM}, we observe
that the factor ${\mathcal M}_\omega$ obtained as the weak closure of $A_{\bL}$ 
in the GNS representation is of type I$_\infty$, since we are only considering the
low temperature KMS states that are of Gibbs form. Thus, the space 
${\mathcal H}_\omega$ decomposes as 
$${\mathcal H}_\omega = {\mathcal H}(\omega)\otimes {\mathcal H}',$$
with an irreducible representation $\pi_\omega$ of $A_{\bL}$ on ${\mathcal H}(\omega)$
and $${\mathcal M}_\omega =\{ T \otimes 1\,|\, T\in \mathcal{B}( {\mathcal H}(\omega)) \}$$
($\mathcal{B}$ indicates the set of bounded operators). Moreover, we have
$$ \langle (T\otimes 1) 1_\omega, 1_\omega\rangle = {\rm Tr}(T \rho) $$
for a density matrix $\rho$ (positive, of trace class, of unit trace). 

We know that the low temperature extremal KMS states for the system 
$(A_{\bL}, \sigma_{\bL})$ are of Gibbs form and given by the explicit expression
in equation (\ref{KMSlow})
for some $\gamma \in G_{\bL}^{\ab}$; and similarly for the system $(A_{\bK},\sigma_{\bK})$. Thus, we can identify
${\mathcal H}(\omega)$ with $\ell^2(J_{\bL}^+)$ and the 
density $\rho$ correspondingly with 
$$\rho=e^{-\beta H_{\sigma_{\bL}}}/{\rm Tr}(e^{-\beta H_{\sigma_{\bL}}});$$ this is the representation considered in Section \ref{hil}. As in
Lemma 4.3 of \cite{CCM}, the evolution group $e^{it H_\omega}$ generated by the
Hamiltonian $H_\omega$ that implements
the time evolution $\sigma_{\bL}$ in the GNS representation on ${\mathcal H}_\omega$ 
agrees with $e^{it H_{\sigma_{\bL}}}$ on the factor ${\mathcal M}_\omega$. We find
$$ e^{it H_\omega} \pi_\omega(f) e^{-it H_\omega} = \pi_\omega(\sigma_{\bL}(f)) =
e^{it H_{\sigma_{\bL}}} \pi_\omega(f)  e^{-it H_{\sigma_{\bL}}}. $$
As observed in \S 4.2 of \cite{CCM}, this gives us that the Hamiltonians differ by a constant, 
$$H_\omega = H_{\sigma_{\bL}} + \log \lambda_1,$$ for some $\lambda_1\in \R^*_+$. 
The argument for the GNS representation
for $\pi_{\varphi^*(\omega)}$ is similar and it gives an identification
of the Hamiltonians $$H_{\varphi^*(\omega)} = H_{\sigma_{\bK}} + \log \lambda_2$$ for
some constant $\lambda_2\in \R^*_+$. 

The algebra isomorphism $\varphi$ induces a unitary equivalence $\Phi$ 
of the Hilbert spaces of the GNS representations of the corresponding states,
and the Hamiltonians that implement the time evolution in these representations
are therefore related by $$H_{\varphi^*(\omega)} = \Phi H_{\omega} \Phi^*.$$
In particular the Hamiltonians $H_{\varphi^*(\omega)}$ and $H_\omega$ then
have the same spectrum. 

Thus, we know from the discussion above that $$H_{\bK} = \Phi H_{\bL} \Phi^* + \log \lambda,$$
for a unitary operator $\Phi$ and a $\lambda\in \R^*_+$.
This gives at the level of zeta functions
\begin{equation} \label{grom} \zeta_{\bL} (\beta) = \lambda^{-\beta} \zeta_{\bK}(\beta). \end{equation}
Now consider the left hand side and right hand side as classical Dirichlet series of the form $$\sum\limits_{n \geq 1} \frac{a_n}{n^{\beta}} \mbox{ \ and \ }\sum\limits_{n \geq 1} \frac{b_n}{(\lambda n)^{\beta}},$$ respectively. Observe that $$a_1=b_1=1.$$ Taking the limit as $\beta \rightarrow + \infty$ in \eqref{grom}, we find 
$$ a_1 = \lim_{\beta \rightarrow +\infty} b_1 \lambda^{-\beta}, $$ from which we conclude that $\lambda=1$. 
Thus, we obtain
$\zeta_{\bK}(\beta) = \zeta_{\bL}(\beta)$, which gives arithmetic equivalence
of the number fields.
\end{proof}

By expanding the zeta functions as Euler products, we deduce

\begin{cor}
If the QSM-systems $(A_{\bK},\sigma_{\bK})$ and $(A_{\bL},\sigma_{\bL})$ of two number fields $\bK$ and $\bL$ are isomorphic, then there is a bijection of the primes $\p$ of $\bK$ above $p$ and the primes $\q$ of $\bL$ above $p$ that preserves the inertia degree: $f(\p|{\bK})=f(\q|{\bL})$. \qed
\end{cor}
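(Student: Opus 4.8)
The plan is to deduce this purely from arithmetic equivalence $\zeta_{\bK} = \zeta_{\bL}$, which Proposition~\ref{isotoareq} already provides; beyond that the QSM-isomorphism plays no role, and the statement is really the classical fact that arithmetically equivalent fields have the same decomposition type at every rational prime. Write $\zeta_{\bK}(s) = \sum_{n \ge 1} a_n^{\bK}\, n^{-s}$ with $a_n^{\bK} = \#\{\fn \in J_{\bK}^+ : N(\fn) = n\}$, and similarly for $\bL$. Since the ideal norm is multiplicative and ideals factor uniquely into prime ideals, $n \mapsto a_n^{\bK}$ is a multiplicative arithmetic function, so $\zeta_{\bK}$ has an Euler product whose factor at a rational prime $p$ is $E_p^{\bK}(s) = \sum_{k \ge 0} a_{p^k}^{\bK}\, p^{-ks}$; grouping the (finitely many) primes of $\bK$ above $p$ this is the familiar
$$ E_p^{\bK}(s) = \prod_{\p \mid p} \bigl(1 - p^{-f(\p|\bK)\, s}\bigr)^{-1}. $$
Arithmetic equivalence gives $a_n^{\bK} = a_n^{\bL}$ for all $n$, hence in particular $E_p^{\bK} = E_p^{\bL}$ for every $p$.

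Next, fix $p$, substitute $x = p^{-s}$, and clear denominators: the equality $E_p^{\bK} = E_p^{\bL}$ becomes the identity of polynomials
$$ \prod_{\p \mid p} \bigl(1 - x^{f(\p|\bK)}\bigr) = \prod_{\q \mid p} \bigl(1 - x^{f(\q|\bL)}\bigr) $$
in $\C[x]$. Both sides are products of factors of the form $1 - x^{f}$, and such a product determines the multiset of exponents: for a primitive $d$-th root of unity $\zeta$, its multiplicity as a root of the left-hand side is exactly $\#\{\p \mid p : d \mid f(\p|\bK)\}$, so comparing multiplicities yields
$$ \#\{\p \mid p : d \mid f(\p|\bK)\} = \#\{\q \mid p : d \mid f(\q|\bL)\} \qquad \text{for all } d \ge 1, $$
and a straightforward inclusion--exclusion (Möbius inversion over the divisibility order, the sums being finite since $f \le [\bK:\Q]$) then gives $\#\{\p \mid p : f(\p|\bK) = d\} = \#\{\q \mid p : f(\q|\bL) = d\}$ for every $d$.

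Finally, for each $p$ and each $d$ one chooses any bijection between the equinumerous sets of primes of $\bK$ above $p$ with inertia degree $d$ and primes of $\bL$ above $p$ with inertia degree $d$; the union of these over all $p$ (and $d$) is the desired inertia-degree-preserving bijection of primes. There is essentially no obstacle here: the only mild point is the recovery of the multiset $\{f(\p|\bK) : \p \mid p\}$ from $\prod_{\p \mid p}(1 - x^{f(\p|\bK)})$, which one could equally well do by comparing logarithmic derivatives of the two sides or by downward induction on the largest exponent present.
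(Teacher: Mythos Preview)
Your proof is correct and follows exactly the approach the paper indicates: the paper's own proof is the single line ``By expanding the zeta functions as Euler products, we deduce'' (together with the \qed), and you have simply written out those details carefully. The only thing to note is that the paper treats this as an immediate consequence of arithmetic equivalence and does not spell out the unique recovery of the multiset of inertia degrees from the local Euler factor; your root-counting/M\"obius argument is one clean way to make that step explicit.
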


Using some other known consequences of arithmetical equivalence, we get the following (\cite{Perlis1}, Theorem 1): 

\begin{cor}
If the QSM-systems $(A_{\bK},\sigma_{\bK})$ and $(A_{\bL},\sigma_{\bL})$ of two number fields $\bK$ and $\bL$ are isomorphic, then the number fields have the same degree over $\Q$, the same discriminant, normal closure, isomorphic unit groups, and the same number of real and complex embeddings. \qed
\end{cor}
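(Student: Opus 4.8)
The plan is to derive every conclusion from arithmetic equivalence. By Proposition~\ref{isotoareq} an isomorphism of QSM-systems forces $\zeta_{\bK}=\zeta_{\bL}$, so the statement reduces to the classical fact that arithmetically equivalent number fields share all of the listed invariants --- this is Perlis's Theorem~1 in \cite{Perlis1} --- and I would recall the ingredients. For the \emph{degree}, exactly as in the Euler-product argument behind the previous corollary, equating the local factors $\prod_{\p\mid p}(1-N\p^{-s})^{-1}$ of $\zeta_{\bK}$ and $\zeta_{\bL}$ at every unramified rational prime $p$ recovers the multiset $\{f(\p\mid\bK):\p\mid p\}$; since $\sum_{\p\mid p} f(\p\mid\bK)=[\bK:\Q]$ for such $p$ and the unramified primes are cofinite, $[\bK:\Q]=[\bL:\Q]$.

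For the \emph{signature and discriminant}, write $\Lambda_{\bK}(s)=|d_{\bK}|^{s/2}\,\Gamma_{\R}(s)^{r_1(\bK)}\,\Gamma_{\C}(s)^{r_2(\bK)}\,\zeta_{\bK}(s)$ for the completed zeta function, with its functional equation $\Lambda_{\bK}(s)=\Lambda_{\bK}(1-s)$. Since the meromorphic continuation of $\zeta_{\bK}=\zeta_{\bL}$ is uniquely determined, dividing the functional equations of $\Lambda_{\bK}$ and $\Lambda_{\bL}$ shows that the ratio of archimedean factors $|d_{\bK}/d_{\bL}|^{s/2}\,\Gamma_{\R}(s)^{r_1(\bK)-r_1(\bL)}\,\Gamma_{\C}(s)^{r_2(\bK)-r_2(\bL)}$ is invariant under $s\mapsto 1-s$; feeding in $r_1+2r_2=[\bK:\Q]$ (already matched) together with the reflection and duplication formulas for $\Gamma$, a short computation forces this ratio to be constant $\equiv 1$, hence $|d_{\bK}|=|d_{\bL}|$ and $(r_1,r_2)$ agree for $\bK$ and $\bL$. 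By Dirichlet's unit theorem $\cO_{\bK}^{*}\cong\mu_{\bK}\times\Z^{\,r_1+r_2-1}$, so the free rank of the unit group is now also an invariant.

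It remains to handle the \emph{normal closure} and the torsion subgroup $\mu_{\bK}$ of the unit group, both of which go through Gassmann's group-theoretic reformulation. Choose a number field $M$ Galois over $\Q$ and containing $\bK$ and $\bL$ (e.g.\ the compositum of their Galois closures), set $G=\Gal(M/\Q)$, and let $H_{\bK},H_{\bL}\le G$ fix $\bK$ and $\bL$; arithmetic equivalence is equivalent to $\C[G/H_{\bK}]\cong\C[G/H_{\bL}]$ as $\C[G]$-modules. An element of $G$ acts trivially on $\C[G/H]$ exactly when it fixes every coset, so the kernels of the two permutation actions coincide, i.e.\ $\operatorname{core}_G(H_{\bK})=\operatorname{core}_G(H_{\bL})$; passing to fixed fields, the Galois ($=$ normal, in characteristic zero) closures of $\bK$ and $\bL$ agree inside $M$, hence are isomorphic. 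Pairing the permutation character instead with the one-dimensional characters of $G$ shows that $H_{\bK}$ and $H_{\bL}$ have the same image in $G^{\ab}$, hence in the cyclotomic quotients $\Gal(\Q(\zeta_m)/\Q)$, so $\bK$ and $\bL$ contain the same roots of unity and $\cO_{\bK}^{*}\cong\cO_{\bL}^{*}$. I do not anticipate a genuine obstacle: the content is classical and is collected in \cite{Perlis1}. The only points needing care are the separation of the $\Gamma$-factors from $\zeta_{\bK}$ in the discriminant and signature step --- where one invokes uniqueness of the meromorphic continuation --- and the two group-theoretic facts that Gassmann-equivalent subgroups have equal cores and equal images in $G^{\ab}$, both immediate from pairing the permutation character with, respectively, all irreducible and all one-dimensional characters of $G$.
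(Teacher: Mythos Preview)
Your proposal is correct and follows the paper's approach exactly: the paper simply invokes Proposition~\ref{isotoareq} to obtain arithmetic equivalence and then cites \cite{Perlis1}, Theorem~1, with no further argument. You have gone further and sketched the classical content of Perlis's theorem (functional equation for discriminant and signature, Gassmann's permutation-representation criterion for normal closure and roots of unity), which is fine and accurate; the one cosmetic point is that from $|d_{\bK}|=|d_{\bL}|$ and matching $r_2$ you should note $\mathrm{sign}(d_{\bK})=(-1)^{r_2}$ to conclude the discriminants themselves agree.
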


However, it does not follow from arithmetical equivalence that $\bK$ and $\bL$ have the same class group (or even class number), cf.\ \cite{PdS}. 

\section{Layers of the QSM-system} 

\begin{se}  \label{hairy} The group $G_{\bK}^{\ab}$ has quotient groups $G_{\bK,\fn}^{\ab}$ defined as the Galois group of the maximal abelian extension of $\bK$ which is unramified at primes dividing $\fn$.  This structure is also reflected in the algebra of the QSM-system, cf.\ also \cite{LLN}, proof of Thm.\ 2.1, or section 3 of \cite{CMR} (including a description in terms of $\bK$-lattices). 

Let $\mu_{\bK}$ denote the measure on $$X_{\bK}=G_{\bK}^{\ab} \times_{\hat\cO^*_{\bK}}\hat\cO_{\bK}$$ given as the products of normalized Haar measures on $G_{\bK}^{\ab}$ and on every factor $\hat{\cO}_{\bK,\p}$ of $\hat\cO_{\bK}$ (so that $\hat\cO_{\bK,\p}^*$ has measure $1-1/N_{\bK}(\p)$). 
Fix an ideal $\fn$ and consider the space
$$ X_{\bK,\fn} := G_{\bK}^{\ab} \times_{\hat \cO_{\bK}^*} \hat \cO_{\bK,\fn}, $$
where $\hat \cO_{\bK,\fn} = \prod_{\p \mid \fn} \hat \cO_{\bK,\p}.$
Then $$ X_{\bK} = \lim_{{\longrightarrow}\atop{\fn}} X_{\bK,\fn}. $$
Let $J_{\bK,\fn}^+$ denote the subsemigroup of $J_{\bK}^+$ generated by the prime ideals dividing $\fn$. Consider the subspace $$X_{\bK,\fn}^*:=G_{\bK}^{\ab} \times_{\hat \cO_{\bK}^*} \hat \cO^*_{\bK,\fn}$$ of $X_{\bK,\fn}$. It is isomorphic as a topological group to  \begin{equation} \label{ster} X_{\bK,\fn}^* \cong G_{\bK}^{\ab}/ \vartheta_{\bK}(\hat \cO_{\bK,\fn}^*) = G_{\bK,\fn}^{\ab},\end{equation} 
the Galois group of the maximal abelian extension of $\bK$ that is unramified at the primes dividing $\fn$.  

We can decompose 
$$ X_{\bK,\fn} = X^1_{\bK,\fn} \coprod X_{\bK,\fn}^2$$ with $$ X^1_{\bK,\fn}:=\coprod_{\fm' \in J_{\bK,\fn}^+} \fm'\ast X^*_{\bK,\fn}\quad \mbox{ and }\quad X^2_{\bK,\fn}:=\bigcup_{\p \mid \fn} Y_{\bK, \p}, $$
where $$Y_{\bK,\p} = \{ (\gamma, \rho) \in X_{\bK,\fn} \, : \, \rho_{\p}=0 \} . $$
The decomposition of $X^1_{\bK,\fn}$ is into disjoint subsets, because $[(\gamma,\rho)] \in \fm' \ast X_{\bK,\fn}^*$ precisely if $\rho$ is exactly ``divisible'' by $\fm'$.  

We observe that by Equation (\ref{ster}), we have a homeomorphism \begin{equation} \label{homx1} X^1_{\bK,\fn} \cong \coprod_{\fm' \in J_{\bK,\fn}^+} G_{\bK,\fn}^{\ab}. \end{equation}  
Now by Fourier analysis, the characters of $G_{\bK,\fn}^{\ab}$ (so the characters of $G_{\bK}^{\ab}$ whose conductor is coprime to $\fn$) are dense in the algebra of functions on $G_{\bK,\fn}^{\ab}$. The algebra of continuous functions on the coproduct $C(\coprod_{\fm' \in J_{\bK,\fn}^+} G_{\bK,\fn}^{\ab})$ is then generated by linear combinations of such characters with support in just one of the components. By pulling this back via the homeomorphism in \eqref{homx1}, we find a set of generators for the algebra of continuous functions on $X^1_{\bK,\fn}$: 
\end{se}

\begin{df} Write an element  $x \in X^1_{\bK,\fn} $ as $x=\fm' \ast [(\gamma,\rho)],$ for some $\rho \in \hat\cO_{\bK,\fn}^*$ (so it is in the $\fm'$-component of the decomposition $X^1_{\bK,\fn} = \coprod_{\fm' \in J_{\bK,\fn}^+} \fm'\ast X^*_{\bK,\fn}$). Let $\chi$ denote character of $G_{\bK}^{\ab}$ whose conductor is coprime to $\fn$, and let $\fm \in J_{\bK,\fn}^+$. Then we define the function 
 $$ f_{\chi,\fm} \colon \fm' \ast [(\gamma,\rho)] \mapsto \delta_{\fm,\fm'} \chi(\vartheta_{\bK}(\fm^{-1}) \gamma), $$
 where $\delta_{\fm,\fm'}$ is the Kronecker delta. 
This is the pullback by the homeomorphism in \eqref{homx1} of the function which is the character $\chi$ precisely in the $\fm$-component of the space. 
\end{df}

The above results imply that these functions generate the algebra $C(X_{\bK,\fn}^1)$. We can now prove:

\begin{lem} \label{gengen} The algebra of functions $C(X_{\bK,\fn})$ is generated by the functions $f_{\chi,\fm}$  in $ C(X^1_{\bK,\fn} ),$ for all   $\chi \in \widehat G_{\bK,\fn}^{\ab}$ and ideals $\fm \in J_{\bK,\fn}^+$.
\end{lem}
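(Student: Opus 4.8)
The plan is to show that the functions $f_{\chi,\fm}$ on $X^1_{\bK,\fn}$ already suffice to recover all of $C(X_{\bK,\fn})$, using the decomposition $X_{\bK,\fn} = X^1_{\bK,\fn} \coprod X^2_{\bK,\fn}$ together with the fact that $X^1_{\bK,\fn}$ is \emph{dense} in $X_{\bK,\fn}$. First I would observe that $X^1_{\bK,\fn}$ is an open dense subset of $X_{\bK,\fn}$: its complement $X^2_{\bK,\fn} = \bigcup_{\p \mid \fn} Y_{\bK,\p}$ is contained in the closed set where at least one $\rho_\p = 0$, which has empty interior and measure zero for $\mu_{\bK}$, while the $\fm'$-components $\fm' \ast X^*_{\bK,\fn}$ are open. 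Hence restriction gives an injective $*$-homomorphism $C(X_{\bK,\fn}) \hookrightarrow C_b(X^1_{\bK,\fn})$, and a continuous function on $X_{\bK,\fn}$ is determined by its values on $X^1_{\bK,\fn}$.

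Next I would invoke the discussion in Section \ref{hairy}: by \eqref{homx1} we have $X^1_{\bK,\fn} \cong \coprod_{\fm' \in J_{\bK,\fn}^+} G_{\bK,\fn}^{\ab}$, and by Pontrjagin/Fourier duality the characters $\chi \in \widehat{G}_{\bK,\fn}^{\ab}$ are dense (in sup-norm on the compact group $G_{\bK,\fn}^{\ab}$) in $C(G_{\bK,\fn}^{\ab})$; placing such a character in the single $\fm$-component yields exactly $f_{\chi,\fm}$ as defined. Thus the $C^*$-algebra generated by the $f_{\chi,\fm}$ contains $C_0$ of each component and, taking (finite) linear combinations across components and closing up, it contains $C_0(X^1_{\bK,\fn})$; since $X^1_{\bK,\fn}$ is a disjoint union indexed by the countable set $J_{\bK,\fn}^+$ and each piece is compact, $C_0(X^1_{\bK,\fn})$ contains in particular all continuous functions on $X^1_{\bK,\fn}$ that arise by restriction from $C(X_{\bK,\fn})$ (these vanish ``at infinity'' along the index set because they extend continuously to the points of $X^2_{\bK,\fn}$, which are the limits of the $Y_{\bK,\p}$-strata). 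Combining with the injectivity of the restriction map, every element of $C(X_{\bK,\fn})$ lies in the closed subalgebra generated by the $f_{\chi,\fm}$.

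The main obstacle is the bookkeeping at the boundary $X^2_{\bK,\fn}$: one must check that a function in $C(X_{\bK,\fn})$, once restricted to the disjoint union $X^1_{\bK,\fn}$, really does decay appropriately over the index set $J_{\bK,\fn}^+$ so that it can be approximated by finitely-supported combinations of the $f_{\chi,\fm}$, and conversely that finite combinations of the $f_{\chi,\fm}$ extend continuously over $X^2_{\bK,\fn}$ (which they do, since each is supported on a single clopen component $\fm' \ast X^*_{\bK,\fn}$ whose closure in $X_{\bK,\fn}$ meets $X^2_{\bK,\fn}$ only in a way compatible with the extension being zero there). Concretely I would argue via the one-point-type compactification of the index set: a continuous $f$ on $X_{\bK,\fn}$ restricted to $\fm' \ast X^*_{\bK,\fn}$ has sup-norm tending to $f$ evaluated along $Y_{\bK,\p}$ as $N(\fm') \to \infty$ through multiples of $\p$, and a Stone--Weierstrass argument on the compactified index space, component by component, then finishes the proof. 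This is the routine analytic step and I would not grind through the $\varepsilon$-estimates here.
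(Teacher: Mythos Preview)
Your strategy is essentially the paper's: show that $X^1_{\bK,\fn}$ is dense in $X_{\bK,\fn}$ (the paper does this via a short measure-theoretic lemma---$X^2_{\bK,\fn}$ has $\mu_{\bK}$-measure zero, and no non-empty open set has measure zero), and then invoke Fourier analysis on each component $G^{\ab}_{\bK,\fn}$ of $X^1_{\bK,\fn}$.

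However, your attempt to make the passage from $C(X^1_{\bK,\fn})$ back to $C(X_{\bK,\fn})$ precise contains a false claim. You assert that every $f \in C(X_{\bK,\fn})$, restricted to $X^1_{\bK,\fn} \cong \coprod_{\fm'} G^{\ab}_{\bK,\fn}$, lies in $C_0(X^1_{\bK,\fn})$. This is not so: the constant function $1$ restricts to the constant $1$ on every component and does not vanish at infinity along the index set $J^+_{\bK,\fn}$. Indeed, as you yourself observe in your last paragraph, the restriction of a continuous $f$ to the component $\fm' \ast X^*_{\bK,\fn}$ tends, as $\fm'$ runs through growing powers of a prime $\p \mid \fn$, to the values of $f$ on $Y_{\bK,\p} \subset X^2_{\bK,\fn}$---not to zero. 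Since each $f_{\chi,\fm}$ is supported on a single clopen component and hence extends by zero to all of $X_{\bK,\fn}$, every element of the closed $*$-subalgebra they generate vanishes identically on the non-empty closed set $X^2_{\bK,\fn}$; that subalgebra therefore cannot contain any $f$ with $f|_{X^2_{\bK,\fn}} \neq 0$, and your Stone--Weierstrass patch cannot help, because the $f_{\chi,\fm}$ do not separate points of $X^2_{\bK,\fn}$ (they are all zero there). The paper's own proof is admittedly very terse at exactly this juncture---it simply writes ``it therefore suffices to give generators for $C(X^1_{\bK,\fn})$''---so one should arguably read the lemma as asserting only that the $f_{\chi,\fm}$ \emph{determine} elements of $C(X_{\bK,\fn})$ via restriction to the dense subset $X^1_{\bK,\fn}$, which is all that is actually used later; but your explicit $C_0$ argument, as written, does not go through.
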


\begin{proof} Observe that $X_{\bK,\fn}^2$ is a set of $\mu_{\bK}$-measure zero.
By total disconnectedness, the algebra $C(X_{\bK,\fn})$ is generated by the characteristic functions of clopen sets.  We claim: 

\begin{lem} \label{regular}
The space $X_{\bK,\fn}$ has no non-empty open sets of $\mu_{\bK}$-measure zero. 
\end{lem}

\begin{proof}[Proof of Lemma \ref{regular}] A $\p$-adic ring of integers $\hat \cO_{\p}$ does not have non-empty open sets $U$ of measure zero, since $U$ contains a ball of sufficiently small radius around any point in it, and this will have Haar measure the $\p$-adic absolute value of the radius; the same argument applies to $G_{\bK}^{\ab}$, by considering it as the idele class group modulo connected component of the identity and using the idele norm. 
\end{proof}
 
It follows that $X_{\bK,\fn}^1$ is dense in $X_{\bK,\fn}$, as the complement
cannot contain any open set. It therefore suffices to give generators for $C(X^1_{\bK,\fn} )$, which we have already done. 
\end{proof}

\begin{cor} \label{dude}
The set $$ X_{\bK}^1 := \bigcup_{\fm \in J^+_{\bK}} \fm \ast (G_{\bK}^{\ab} \times_{\hat \cO_{\bK}^*} \hat \cO_{\bK}^\ast) = \{ [(\gamma,\rho)] \colon \rho_{\p} \neq 0 \ \forall \p \} $$ is dense in $X_{\bK}$. 
\end{cor}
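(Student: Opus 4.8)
The statement to prove is Corollary \ref{dude}: that $X_{\bK}^1 = \bigcup_{\fm} \fm \ast (G_{\bK}^{\ab} \times_{\hat\cO_{\bK}^*} \hat\cO_{\bK}^*) = \{[(\gamma,\rho)] \colon \rho_{\p} \neq 0 \ \forall \p\}$ is dense in $X_{\bK}$. The natural approach is to deduce this from Lemma \ref{gengen} (or rather its proof) by a limiting argument over the directed system of ideals $\fn$, using the identification $X_{\bK} = \varinjlim_{\fn} X_{\bK,\fn}$ established in Section \ref{hairy}. First I would observe that under the projection $p_{\fn} \colon X_{\bK} \to X_{\bK,\fn}$ coming from the inverse limit presentation (restricting an adele to its components at primes dividing $\fn$), the set $X_{\bK}^1$ maps into (indeed onto) the dense subset $X_{\bK,\fn}^1 \subseteq X_{\bK,\fn}$: a point $[(\gamma,\rho)]$ with all $\rho_{\p} \neq 0$ has the property that, at the finitely many primes dividing $\fn$, the adele $\rho$ is exactly divisible by some $\fm' \in J_{\bK,\fn}^+$, hence its image lies in $\fm' \ast X_{\bK,\fn}^*$.

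Next I would use the already-proven fact (inside the proof of Lemma \ref{gengen}) that $X_{\bK,\fn}^2$ has $\mu_{\bK}$-measure zero and that $X_{\bK,\fn}$ has no nonempty open set of measure zero (Lemma \ref{regular}), so $X_{\bK,\fn}^1$ is dense in $X_{\bK,\fn}$ for every $\fn$. The passage to the limit then goes as follows: let $U \subseteq X_{\bK}$ be a nonempty open set. By definition of the colimit/limit topology on $X_{\bK}$, $U$ contains a basic open set of the form $p_{\fn}^{-1}(V)$ for some $\fn$ and some nonempty open $V \subseteq X_{\bK,\fn}$ (here one uses that the $p_{\fn}$ are continuous open surjections, or simply that cylinder sets form a basis). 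Since $X_{\bK,\fn}^1$ is dense in $X_{\bK,\fn}$, there is a point $y \in V \cap X_{\bK,\fn}^1$, and lifting $y$ back to a point $x \in X_{\bK}$ with $p_{\fn}(x) = y$ and all remaining components of $x$ chosen to be nonzero units, we get $x \in U \cap X_{\bK}^1$. Hence $X_{\bK}^1$ meets every nonempty open set, i.e.\ it is dense.

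The equality of the two descriptions of $X_{\bK}^1$ is the routine part: $[(\gamma,\rho)] \in \fm \ast (G_{\bK}^{\ab} \times_{\hat\cO_{\bK}^*} \hat\cO_{\bK}^*)$ means $\fm^{-1}\rho$ is a unit integral adele, i.e.\ $\rho$ is exactly divisible by $\fm$ at every prime, which holds for a unique $\fm$ precisely when $\rho_{\p} \neq 0$ for all $\p$; the balancing relation makes this independent of the representative. The only point needing a little care, and the step I expect to be the main (though still minor) obstacle, is making the limit argument clean: one must be sure that open sets of $X_{\bK}$ really are detected by finitely many primes at a time — i.e.\ that the topology on $X_{\bK}$ is the inductive limit topology from the $X_{\bK,\fn}$ (equivalently, that $\hat\cO_{\bK} = \prod_{\p}\hat\cO_{\bK,\p}$ carries the product topology and $G_{\bK}^{\ab}$ is compact), so that a basis is given by sets constrained at only finitely many places. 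Granting that (which is immediate from the definitions in Section \ref{hairy}), the density of $X_{\bK}^1$ follows, and in fact one sees $X_{\bK} \setminus X_{\bK}^1 = \bigcup_{\p} \{[(\gamma,\rho)] \colon \rho_{\p} = 0\}$ is a countable union of measure-zero closed sets, reinforcing the conclusion.
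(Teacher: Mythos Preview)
Your proof is correct and follows essentially the same approach as the paper's: establish density of $X_{\bK,\fn}^1$ in each finite-level space $X_{\bK,\fn}$ (from the proof of Lemma~\ref{gengen} via Lemma~\ref{regular}), then pass to the limit. The paper compresses the limiting step into the single phrase ``taking the union over all $\fn$'' together with ``the closure of a union contains the union of the closures''; you have spelled this out more carefully as an inverse-limit/cylinder-set argument (basic opens are pullbacks $p_{\fn}^{-1}(V)$, pick $y\in V\cap X_{\bK,\fn}^1$, lift to $X_{\bK}^1$ by choosing units at the remaining places), which is the right way to make the paper's sentence rigorous.
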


\begin{proof}
It follows from the above proof that  $$ \bigcup_{\fm \in J^+_{\bK,\fn}} \fm \ast (G_{\bK}^{\ab} \times_{\hat \cO_{\bK}^*} \hat \cO_{\bK,\fn}^\ast)$$ is dense in $X_{\bK,\fn}$, so by taking the union over all $\fn$, we find the result (recall that the closure of a union contains the union of the closures). \end{proof}

\begin{remark}[$\bK$-lattices] Let $\bar \cM_{\bK,1}$ denote the space of one-dimensional $\bK$-lattices up to scaling; recall that $C(X_{\bK})=C(\bar \cM_{\bK,1})$. The preceding theory organizes this space into an inductive system of the spaces $C(\bar \cM_{\bK,1,\fn})$ of functions that depend on the datum $\phi$ of a $\bK$-lattice $(\Lambda,\phi)$  only through its projection to $\hat\cO_{\bK,\fn}$. 

\end{remark}

\section{Crossed product structure and QSM-isomorphism}

In this section, we deduce from the dagger-isomorphism of the QSM-systems the conjugacy of the corresponding ``dynamical systems'' $(X_{\bK},J_{\bK}^+)$ and $(X_{\bL},J_{\bL}^+)$. There is a large literature on recovering such systems from (non-involutive) operator algebras, starting with Arveson-Josephson. We refer to \cite{Davidson} for a recent overview and theorems with minimal conditions, leading to ``piecewise conjugacy''. Here, we will present as simple as possible a proof for our case, where we can exploit our assumption that the \emph{algebraically generated} dagger-subalgebra is preserved, as well as the ergodicity of the action and some strong density assumptions on fixed point sets. 

\begin{notation}
Fix a rational prime number $p$ and a positive integer $f$. Let $J_{\bK,p^f}^+$ denote the sub-semigroup of $J_{\bK}^+$ generated by the primes $\p_1^{\bK}=\p_1,\dots,\p_N^{\bK}=\p_N$ of norm $N_{\bK}(\p_i)=p^f$. Let $A^{\dagger}_{\bK,p^f}$ denote the (non-involutive) 
subalgebra of  $A^{\dagger}_{\bK}$ generated algebraically by the functions $C(X_{\bK})$ and the 
isometries $\mu_{\p}$ with $\p=\p_i$ a prime in $J_{\bK,p^f}^+$.

We will use multi-index notation: for $\alpha=(\alpha_1,\dots,\alpha_N) \in \Z_{\geq 0}^N$, we let $\mu_\alpha=\mu_{\p_1}^{\alpha_1} \dots \mu_{\p_N}^{\alpha_N}$, and let $|\alpha|=N$ denote the length of $\alpha$. Similarly, we let $\sigma_{\alpha}=\sigma_{\mu_{\alpha}}$, etc.\ (beware not to confuse the partial inverse $\sigma_\alpha$ with the time evolution $\sigma_t$).   Any element $a \in A_{\bK,p^f}^\dagger$ can be uniquely written in the form $$a=\sum_{\alpha}   \mu_\alpha E_\alpha (a)$$ for ``generalized Fourier coefficients'' $E_{\alpha}(a) \in C(X_{\bK})$. 
\end{notation}

\begin{prop}\label{isoJs}
A dagger-isomorphism of QSM-systems $\varphi: (A_{\bK},\sigma_{\bK}) \isomto (A_{\bL},\sigma_{\bL})$  induces a homeomorphism $\Phi: X_{\bK}\isomto X_{\bL}$ and a norm-preserving semigroup isomorphism
$\varphi : J_{\bK}^+ \isomto J_{\bL}^+ $ \textup{(}viz., such that $N_{\bL}(\varphi(\fn)) = N_{\bK}(\fn)$\textup{)}, satisfying the compatibility condition
$$ \Phi(\fn \ast x) = \varphi(\fn) \ast \Phi(x).$$
\end{prop}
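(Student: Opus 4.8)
The plan is to extract from the dagger-isomorphism $\varphi$ a bijection on the isometries $\mu_{\fn}$, show it preserves norms, and then transport this to a homeomorphism of the underlying spaces that intertwines the ideal actions. First I would pin down the image of the isometries. Since $\varphi$ is a $C^*$-algebra isomorphism intertwining the time evolutions, and $\sigma_{\bK,t}(\mu_{\fn}) = N(\fn)^{it}\mu_{\fn}$, the element $\varphi(\mu_{\fn})$ is an eigenvector of $\sigma_{\bL,t}$ with eigenvalue $N(\fn)^{it}$, and it is an isometry lying in $A_{\bL}^\dagger$ (because $\varphi(A_{\bK}^\dagger)=A_{\bL}^\dagger$). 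Using the generalized Fourier decomposition $a = \sum_\alpha \mu_\alpha E_\alpha(a)$ for elements of the dagger-subalgebra, together with the fact that the time evolution acts on the $\mu_\alpha$-homogeneous piece by $N(\p_1)^{\alpha_1 it}\cdots N(\p_N)^{\alpha_N it}$, I would argue that an isometry in $A_{\bL}^\dagger$ which is a $\sigma$-eigenvector with a prescribed eigenvalue must, up to a unitary in $C(X_{\bL})$, be of the form $\mu_{\fm}$ for a unique ideal $\fm$ with $N_{\bL}(\fm) = N_{\bK}(\fn)$. (Here one also uses that $C(X_{\bL})$ consists of $\sigma$-fixed elements and that the $\mu_{\fm}$ are, up to such unitaries, the only isometries with their homogeneity degree; the arithmetic-equivalence corollary already guarantees that an ideal of every needed norm exists on both sides.) This defines $\varphi: J_{\bK}^+ \isomto J_{\bL}^+$; multiplicativity follows from $\mu_{\fn}\mu_{\fn'} = \mu_{\fn\fn'}$ and the analogous relation on the $\bL$-side, and norm-preservation is built in.

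Next I would recover the homeomorphism $\Phi$. The restriction of $\varphi$ to the commutative subalgebra gives a $C^*$-isomorphism $\varphi|: C(X_{\bK}) \to \varphi(C(X_{\bK}))$; the key point is that $\varphi$ maps $C(X_{\bK})$ onto $C(X_{\bL})$. For this, note that $C(X_{\bK})$ is exactly the fixed-point algebra of $A_{\bK}^\dagger$ under the time evolution restricted to... more precisely, $C(X_{\bK})$ is the set of $a \in A_{\bK}^\dagger$ with $\sigma_{\bK,t}(a)=a$ for all $t$; since $\varphi$ intertwines the evolutions and preserves the dagger-subalgebras, it must carry this fixed-point set onto the corresponding one, namely $C(X_{\bL})$. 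Then by Gelfand duality $\varphi|_{C(X_{\bK})}$ is induced by a homeomorphism $\Phi: X_{\bK} \isomto X_{\bL}$.

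Finally I would verify the compatibility $\Phi(\fn \ast x) = \varphi(\fn) \ast \Phi(x)$. Using the crossed-product relations, $\rho_{\fn}(f) = \mu_{\fn} f \mu_{\fn}^*$, i.e. the action of $\fn$ on $C(X_{\bK})$ is implemented by conjugation by $\mu_{\fn}$. Applying $\varphi$ and using $\varphi(\mu_{\fn}) = u_{\bL}\, \mu_{\varphi(\fn)}$ for a unitary $u_{\bL} \in C(X_{\bL})$ (from the first step), conjugation by $\varphi(\mu_{\fn})$ on $C(X_{\bL})$ agrees with conjugation by $\mu_{\varphi(\fn)}$ since $u_{\bL}$ is central in $C(X_{\bL})$ (commutative). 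Hence $\varphi \circ \rho_{\fn} = \rho_{\varphi(\fn)} \circ \varphi$ on $C(X_{\bL})$, and translating this via $\sigma_{\fn}(f) = f(\fn\ast x)$ and Gelfand duality yields exactly $\Phi(\fn \ast x) = \varphi(\fn)\ast\Phi(x)$; one should check both the support (projector $e_{\fn}$) condition and the action on the dense subset $X^1_{\bK}$ of Corollary \ref{dude}, then extend by continuity.

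The main obstacle I anticipate is the first step: rigorously showing that a $\sigma_{\bL}$-eigenvector isometry inside the \emph{algebraically generated} (non-closed) dagger-subalgebra $A_{\bL}^\dagger$ is forced to be a unitary multiple of a single $\mu_{\fm}$, rather than some more complicated combination $\sum_\alpha \mu_\alpha E_\alpha$ with several $\alpha$ of the same total norm (which can happen when $p$ splits into several primes of the same residue degree). Disentangling these requires carefully using the isometry condition $\varphi(\mu_{\fn})^*\varphi(\mu_{\fn})=1$ together with the specific form of the partial isometries $\mu_\alpha$ and their source/range projections $e_{\fn}$, and possibly the ergodicity of the $J_{\bK}^+$-action on $X_{\bK}$ to rule out off-diagonal cross-terms.
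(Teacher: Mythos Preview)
Your second step is correct and matches the paper's opening move: $C(X_{\bK})$ is exactly the $\sigma_t$-eigenspace for eigenvalue $1$ inside $A_{\bK}^\dagger$, so $\varphi$ restricts to an isomorphism $C(X_{\bK})\isomto C(X_{\bL})$ and hence gives a homeomorphism $\Phi$.

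The first step, however, has a genuine gap, and it is exactly the obstacle you anticipate but cannot resolve: the claim that an isometry in $A_{\bL}^\dagger$ which is a $\sigma$-eigenvector must be a unitary multiple of a single $\mu_{\fm}$ is \emph{false}. For two distinct primes $\p_1,\p_2$ of the same norm, the element
\[
u=\mu_{\p_1}\,e_{\p_1}+\mu_{\p_2}(1-e_{\p_1})
\]
lies in $A_{\bK}^\dagger$, is a $\sigma$-eigenvector, and satisfies $u^*u=1$ (using $\sigma_{\p_1}(e_{\p_1})=1$ to kill the cross terms). So nothing in the isometry condition alone forces $\varphi(\mu_{\p})$ to single out one ideal on the $\bL$-side. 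The paper's later Proposition~\ref{bloeb} does reach the conclusion ``$u=a\mu_{\fr}$'', but only after imposing the \emph{extra} equation $ufu^*=\varepsilon_s(f)$ for all $f$; without such additional input the reduction fails, and your third step (which relies on $\varphi(\mu_{\fn})=u_{\bL}\mu_{\varphi(\fn)}$) never gets off the ground.

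The paper avoids this altogether by reversing your order of operations. Having $\Phi$ in hand, it works inside the subalgebras $A_{\bK,p^f}^\dagger$ and gives a purely algebraic description, via the commutator ideal $\cC$ and the codimension of $I_{y_0}\cC+\cC I_{x_0}+\cC^2$ in $\cC$, of when $y_0\in\{\p_1\ast x_0,\dots,\p_N\ast x_0\}$. This transports under $\varphi$ and yields only a \emph{piecewise} conjugacy: a locally constant map $\alpha\colon X_{\bK}\times J_{\bK}^+\to J_{\bL}^+$ with $\Phi(\fn\ast x)=\alpha_x(\fn)\ast\Phi(x)$. The genuine work, and the place where ergodicity enters, is to show $\alpha_x$ is constant in $x$: one checks that the level set $\{x:\alpha_x=\alpha_1\}$ (intersected with a suitable dense set) is $J_{\bK}^+$-invariant, hence of full measure by ergodicity, hence all of $X_{\bK}$ because nonempty open sets have positive measure. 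Your invocation of ergodicity was on the right track, but it is used to globalize a piecewise conjugacy of dynamical systems, not to constrain individual isometries.
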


\begin{proof} 

First of all, $\varphi$ maps the $\sigma_t$-eigenspace for eigenvalue $1$ in the dagger subalgebra $A_{\bK}^\dagger$ to that in $A_{\bL}^\dagger$: from the representation through generalized Fourier series, it is easy to see that these eigenspaces consist exactly of the functions $C(X_{\bK})$, respectively $C(X_{\bL})$. Hence $\varphi$ induces an isomorphism between these algebras, and hence a homeomorphism $$\Phi \colon X_{\bK} \rightarrow X_{\bL}.$$  

Now fix a rational prime $p$ and a positive integer $f$. We claim that $\varphi$ induces an isomorphism $$\varphi: A^{\dagger}_{\bK,p^f}\isomto A^{\dagger}_{\bL,p^f}.$$  Indeed, we have by assumption that $\varphi$ maps the dagger subalgebra $A_{\bK}^\dagger$ to $A_{\bL}^\dagger$. Now $A_{\bK,p^f}^\dagger$ is precisely the subalgebra generated by $C(X)$ and the $p^{fit}$-eigensubspace of $\sigma_t$ acting on $A_{\bK}^\dagger$. Since $\varphi$ is compatible with time evolution, it maps the $p^{fit}$-eigenspace of $\sigma_{\bK,t}$ to that of $\sigma_{\bL,t}$, so the claim holds. 

We now interject a topological lemma which will be used in the proof: 

\begin{lem} \label{denseopen}
\mbox{ } 
\begin{enumerate}
\item[\textup{(i)}] Let $x=[(\gamma,\rho)] \in X_{\bK}$ and assume that there exist two distinct ideals $\fm$ and $\fn$ with 
$\fm \ast x = \fn \ast x. $ Then the $\p$-component $x_{\p}$ of $x$ is zero for some $\p$ dividing the least common multiple of $\fm$ and $\fn$. 
\item[\textup{(ii)}] The set $$ X_{\bK}^0:=\{ x \in X_{\bK} \colon \fm \ast x \neq \fn \ast x \mbox{\ for all } \fm \neq \fn \in J_{\bK,p^f}^+ \} $$
contains a dense open set in $X_{\bK}$.
\item[\textup{(iii)}] The set $X_{\bK}^{00}:= X_{\bK}^0 \cap \Phi^{-1}(X_{\bL}^0)$ is dense in $X_{\bK}$.  
\end{enumerate}
\end{lem}

\begin{proof}
The equality  $\fm \ast x = \fn \ast x$ means the existence of an idelic unit with $\vartheta_{\bK}(\fm) = \vartheta_{\bK}(u) \vartheta_{\bK}(\fn)$ and $\rho s(\fm) = u s(\fn) \rho$. Thus, if $\rho$ has non-zero component at all divisors of $\fm$ and $\fn$, then it follows from the second equality that $\fm=\fn$. 

Now consider the set consisting of $x \in X_{\bK}$ such that $x_{\p} \neq 0$ for \emph{all} $\p=\p_1,\dots,\p_N$ in $J_{\bK,p^f}^+$. By the above, it is contained in $X_{\bK}^0$. Also the set is open, as the complement of finitely many closed sets (namely, the ones on which $x_{\p}=0$ for the finitely many primes $\p$ of norm $p^f$). Finally, it is dense, since it contains the set $X_{\bK}^1$ (the subset where \emph{no} component of $\rho$ is zero), of which we have already shown that it is dense in $X_{\bK}$ in Lemma \ref{dude}. 

Since $\Phi$ is a homeomorphism, $\Phi^{-1}(X_{\bL}^0)$ is dense open in $X_{\bK}$, and it suffices to notice that the intersection of dense open sets is dense. \end{proof}

We now show that one can algebraically describe the set of images of $x \in X^0_{\bK}$ under the generators $\p_i$: 

\begin{lem}
Let $\cC$ denote the commutator ideal in $A_{\bK,p^f}^\dagger$ and $\cC^2$ the span of products of elements in $\cC$. For $x_0 \in X_{\bK}$, let $I_{x_0}$ denote the ideal of functions $f \in C(X_{\bK})$ that vanish at $x_0$. 
Then for $x_0, y_0 \in X^0_{\bK}$, we have that $$y_0 \in \{ \p_1 \ast x_0,\dots, \p_N \ast x_0 \}$$ if and only if 
$$M_{x_0,y_0}:= I_{y_0} \cC + \cC I_{x_0} +\cC^2$$ has codimension one as a subvectorspace of $\cC$.
\end{lem}

\begin{proof} 
We claim that 
$$ \cC = \{ a \in A_{\bK,p^f}^\dagger \colon E_0(a)=0 \mbox{ and } E_\alpha(a) \in \cE_\alpha \ \forall \alpha \neq 0 \}, $$
where $\cE_\alpha$ is the $C(X_{\bK})$-ideal generated by 
the ``coboundaries" $$h=f-\sigma_\alpha(f) $$ for some $f \in C(X_{\bK})$. Indeed, this follows from computing commutators $[\mu_\alpha,f] =  \mu_\alpha(f-\sigma_{\alpha}(f))$ for $f \in C(X_{\bK})$. 
Similarly, one finds 
$$ \cC^2 = \{ a \in A_{\bK,p^f}^\dagger \colon E_\alpha(a)=0 \mbox{ for all } |\alpha| \leq 1 \mbox{ and } E_\alpha(a) \in \cE^2_\alpha \ \forall |\alpha|>1 \}, $$
where $\cE^2_\alpha$ is the ideal in $C(X_{\bK})$ generated by products of coboundaries. Since the action of $J_{\bK,p^f}^+$ is continuous, the ideals $\cE_\alpha$ are closed in $C(X_{\bK})$, so $\cE^2_\alpha=\cE_\alpha$. 
Hence the space $\cC/\cC^2$ is isomorphic to $\cC/\cC^2= \bigoplus\limits_{|\alpha|=1} \mu_{\alpha} \cE_{\alpha} $. 

Now $M_{x_0,y_0}=I_{y_0} \cC + \cC I_{x_0} +\cC^2$ is described as 
$$ M_{x_0,y_0} = \{ a \in \cC \colon E_{\alpha}(a) \in \left(\sigma_{\alpha}(I_{y_0}) +I_{x_0} \right)\cE_{\alpha} \ \forall |\alpha|=1 \} $$ 
Fix an index $|\beta|=1$, corresponding to $\p_k$. Since $I_{x_0}$ is a closed maximal ideal in $C(X)$, either $\sigma_{\beta}(I_{y_0}) \subseteq I_{x_0}$, or $\sigma_\beta(I_{y_0}) + I_{x_0} = C(X_{\bK})$. The first case occurs exactly if $y_0 = \p_k \ast x_0$. Also, this case occurs at most for one such $k$, since we assume that $x_0 \in X_{\bK}^0$. Hence either 
$ M_{x_0,y_0} =  \cC$, or there exists a unique $k$ (so a unique corresponding $\beta$) such that $y_0 = \p_k \ast x_0$ and $M_{x_0,y_0} = \{ a \in \cC \colon E_{\beta}(a) \in I_{x_0}\cE_{\beta}\} $, which has codimension $1$ in $\cC$.  
\end{proof} 

Now recall that we know that $\varphi$ is induced from a homeomorphism $\Phi: X_{\bK} \rightarrow X_{\bL}$. Since $\varphi$ is an algebra homomorphism, we find that 
$$ \varphi(M^{\bK}_{x_0,y_0}) = M^{\bL}_{\Phi(x_0),\Phi(y_0)}  $$
(where we use superscript $\bK$ and $\bL$ to refer to the different fields). 
Now suppose that $x \in X^{00}_{\bK}$. Then the sets $\{\p^{\bK}_i \ast x \}_{i=1}^N$ and $\{\p^{\bL}_i \ast \Phi(x)\}_{i=1}^N$ contain $N$ distinct elements, and the above reasoning shows that they are mapped to each other by $\Phi$: 
this gives, for each 
$x\in X_{\bK}^{00}$, a permutation of $\p^{\bL}_i$, and hence a locally constant function $\alpha: X_{\bK}^{00} \times J_{\bK,p^f}^+ \to J_{\bL,p^f}^+$ with 
\begin{equation} \label{dodo} \Phi(\fn \ast x) = \alpha_x(\fn) \ast \Phi(x). \end{equation}  
Since $X_{\bK}^{00}$ is dense in $X_{\bK}$, we can extend $\alpha$ by continuity to $X_{\bK} \times J_{\bK,p^f}^+$, such that identity \eqref{dodo} still holds. 

Gluing back together the algebras $A_{\bK,p^f}^\dagger$ for various $p$ and $f$, we finally find a homeomorphism 
$\Phi \colon X_{\bK} \isomto X_{\bL}$ (which is by construction independent of $p^f$), and a locally constant map
$$ \alpha \colon X_{\bK} \times J_{\bK}^+ \rightarrow J_{\bL}^+ $$
such that $N_{\bL}(\alpha_x(\fn)) = N_{\bK}(\fn)$,  and \eqref{dodo} holds for all $x$ and $\fn \in J_{\bK}^+$ (this is known as \emph{piecewise conjugacy} of the dynamical systems $(X_{\bK},J_{\bK}^+)$ and $(X_{\bL},J_{\bL}^+)$ in the terminology of Davidson and Katsoulis \cite{Davidson}).

We now proceed to showing that $\alpha_x$ is actually constant. For this, consider the level set 
$$ \tilde{X}_{\bK} := \{ x \in X_{\bK} \colon \Phi(x) \in X_{\bL}^1  \mbox{ and } \alpha_x(\fn)=\alpha_1(\fn) \ \forall \fn \in J_{\bK}^+ \}. $$
Observe that we only consider $x$ for which $\Phi(x)$ is in $X_{\bL}^1$, the dense subspace of $X_{\bL}$ in which none of the idele components is zero (cf.\ \ref{dude}).
 
 We claim that the set $\tilde{X}_{\bK}$ is invariant under the action of $J_{\bK}^+$. We will verify that  for all $\fm \in J_{\bK}^+$, we have that $\alpha_x=\alpha_1$ if and only if $\alpha_{\fm \ast x} = \alpha_1$. 
 
 We compute that for $\fn \in J_{\bK}^+$ one has
\begin{eqnarray}  \label{boho} \alpha_{\fm \ast x}(\fn) \ast \Phi(\fm \ast x) &=& \Phi(\fn \ast (\fm \ast x))  = \Phi(\fm \fn \ast x) \\ &=& \alpha_x(\fm \fn) \ast \Phi(x) =  \alpha_x(\fn) \ast (\alpha_x(\fm) \ast \Phi(x)) \nonumber \\ &=& \alpha_x(\fn) \ast \Phi(\fm \ast x). \nonumber\end{eqnarray}
We now claim that if $\Phi(x) \in X_{\bL}^1$, then also $\Phi(\fm \ast x) \in X_{\bL}^1$ for all $\fm \in J_{\bK}^+$; this follows from the compatibility $\Phi(\fm \ast x) = \alpha_x(\fm) \ast \Phi(x)$ and the fact that $\Phi(x)=[(\gamma,\rho)] \in X_{\bL}^1$ if and only if none of the local components $\rho_{\p}$ of $\rho$ is zero, which is preserved under the action of $\alpha_x(\fm)$. Hence in the above formula, $\Phi(\fm \ast x) \in X_{\bL}^1$. 

Now if $y  \in X_{\bL}^1$, then by Lemma \ref{denseopen}, for any ideals $\fm', \fn' \in J_{\bL}^+$, we have an equivalence \begin{equation} \label{mum} \fm' \ast y = \fn' \ast y \ \iff \ \fm' = \fn'. \end{equation} 

Thus, we conclude from \eqref{boho} that we have an equality of ideals $\alpha_{\fm \ast x}(\fn) = \alpha_x(\fn)$ for all $\fn \in J_{\bK}^+$. Hence $\alpha_x=\alpha_1$ if and only if $\alpha_{\fm \ast x} = \alpha_1$, which shows that $\tilde{X}_{\bK}$ is an invariant set for the action of $J_{\bK}^+$ on $X_{\bK}$. 

Now recall from \cite{LLN} (Proof of Theorem 2.1 on p.\ 332) that the action of $J_{\bK}^+$ on $X_{\bK}$ is ergodic for the  measure $\mu_{\bK}$ (cf.\ Section \ref{hairy}). Thus, the invariant set $\tilde{X}_{\bK}$ has measure zero or one. It cannot have measure zero: it contains the element $x=1$, and since $\alpha_x$ is locally constant, it contains an open neighbourhood of $1$, and non-empty open sets in $X_{\bK}$ have strictly positive measure (by Lemma \ref{regular}). We conclude that $\tilde{X}_{\bK}$ is of full measure hence also its superset 
$$ \tilde{X}'_{\bK}= \{ x \in X_{\bK} \colon \alpha_x = \alpha_1 \}$$
is of full measure and closed. Hence the complement is an open set of measure zero, hence empty (Lemma \ref{regular}). We conclude that $\tilde{X}'_{\bK}=X_{\bK}$ and we indeed have $\alpha_x=\alpha_1$ for all $x \in X_{\bK}$. 
\end{proof}

\section{QSM-isomorphism and isomorphism of abelianized Galois groups}

In this section, we prove that QSM-isomorphism implies an isomorphism of abelianized Galois groups. 

\begin{remark} \label{Ulm}
The isomorphism type of the infinite abelian group $G_{\bK}^{\ab}$ is determined by its so-called \emph{Ulm invariants}. For $G_{\bK}^{\ab}$, those were computed abstractly by Kubota (\cite{Kubota}), and Onabe (\cite{Onabe}) computed them explicitly for quadratic imaginary fields. For example, $G_{\Q(i)}^{\ab}$ is never isomorphic to any other group for such a field, but $\Q(\sqrt{-2})$ and $\Q(\sqrt{-3})$ have isomorphic abelianized absolute Galois groups (and they are not isomorphic as fields). 
\end{remark}

\begin{lem}
Consider the projector 
$e_{\bK,\fn}=\mu_{\fn} \mu_{\fn}^*$. Then the range of $e_{\bK,\fn}$ is mapped by $\Phi$ to the range of $e_{\bL,\varphi(\fn)}$: 
$$ \Phi(\mathrm{Range}(e_{\bK,\fn})) = \mathrm{Range}(e_{\bL,\varphi(\fn)}). $$
\end{lem}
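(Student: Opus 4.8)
The plan is to reduce the claim to a direct unwinding of Proposition \ref{isoJs}, exploiting that $e_{\bK,\fn}$ already lies in the commutative part $C(X_{\bK})$ of the algebra, on which $\varphi$ acts as pullback along $\Phi^{-1}$.

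First I would record that
$$ \mathrm{Range}(e_{\bK,\fn}) \;=\; \fn \ast X_{\bK} \;=\; \{\, [(\gamma,\rho)] \in X_{\bK} \;:\; s(\fn)^{-1}\rho \in \hat\cO_{\bK} \,\}. $$
Here the projector $e_{\bK,\fn} = \mu_{\fn}\mu_{\fn}^*$ is, by its very definition, the characteristic function in $C(X_{\bK})$ of the clopen set of classes whose $\rho$-part is divisible by $\fn$; and that set is exactly $\fn \ast X_{\bK}$, since any $[(\gamma',\rho')]$ with $s(\fn)^{-1}\rho' \in \hat\cO_{\bK}$ can be written as $\fn \ast [(\vartheta_{\bK}(\fn)\gamma',\, s(\fn)^{-1}\rho')]$, while conversely every $\fn \ast [(\gamma,\rho)] = [(\vartheta_{\bK}(\fn)^{-1}\gamma,\, s(\fn)\rho)]$ visibly has $\rho$-part divisible by $\fn$. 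Because in the proof of Proposition \ref{isoJs} the homeomorphism $\Phi$ was constructed precisely so that $\varphi(f) = f \circ \Phi^{-1}$ for all $f \in C(X_{\bK})$, applying $\varphi$ to the characteristic function $e_{\bK,\fn}$ gives $\varphi(e_{\bK,\fn}) = \one_{\Phi(\fn \ast X_{\bK})}$, again a characteristic function in $C(X_{\bL})$.

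It then remains only to compute $\Phi(\fn \ast X_{\bK})$. Using that $\Phi \colon X_{\bK} \isomto X_{\bL}$ is a bijection and the compatibility $\Phi(\fn \ast x) = \varphi(\fn) \ast \Phi(x)$ supplied by Proposition \ref{isoJs}, one has
$$ \Phi(\fn \ast X_{\bK}) \;=\; \{\, \varphi(\fn) \ast \Phi(x) \;:\; x \in X_{\bK} \,\} \;=\; \varphi(\fn) \ast X_{\bL} \;=\; \mathrm{Range}(e_{\bL,\varphi(\fn)}), $$
where the last equality is the identification just made, applied over $\bL$. Consequently $\varphi(e_{\bK,\fn}) = e_{\bL,\varphi(\fn)}$, and in particular $\Phi(\mathrm{Range}(e_{\bK,\fn})) = \mathrm{Range}(e_{\bL,\varphi(\fn)})$, which is the assertion.

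I do not anticipate a real obstacle: the substantive work --- producing $\Phi$, the norm-preserving semigroup isomorphism on ideals, and the compatibility relation --- was already done in Proposition \ref{isoJs}, and what is left is purely a matter of reading off definitions. The only points needing mild care are the identification of $\mathrm{Range}(e_{\bK,\fn})$ with the clopen subset $\fn \ast X_{\bK}$ (equivalently, with the support of the characteristic function $e_{\bK,\fn}$) and the fact that $\varphi$ restricts on $C(X_{\bK})$ to the Gelfand pullback by $\Phi^{-1}$; once these are granted the lemma is immediate.
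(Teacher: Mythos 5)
Your proof is correct and follows essentially the same route as the paper's: both identify $\mathrm{Range}(e_{\bK,\fn})$ with $\fn \ast X_{\bK}$ and then invoke the compatibility $\Phi(\fn \ast x) = \varphi(\fn) \ast \Phi(x)$ from Proposition \ref{isoJs} to transport this set to $\varphi(\fn) \ast X_{\bL} = \mathrm{Range}(e_{\bL,\varphi(\fn)})$. Your write-up is a little more explicit about the clopen-set/characteristic-function picture and about $\varphi$ acting as Gelfand pullback on $C(X_{\bK})$, but these are just the same observations spelled out in more detail.
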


\begin{proof}
By definition, we have that $x=[(\gamma,\rho)]$ is in the range of $e_{\bK,\fn}$ if and only if $x=[(\gamma',\fn \rho')]$ for some $\gamma' \in G_{\bK}^{\ab}$ and $\rho' \in \hat \cO_{\bK}$. This is equivalent to 
$$ x = [(\vartheta_{\bK}(\fn)^{-1} \gamma'',\fn \rho')] = \fn \ast x'$$ for some $x'=[(\gamma'',\rho')] \in X_{\bK}$. 
If we now apply $\Phi$, we get that the statement is equivalent to 
$$ \Phi(x) = \Phi(\fn \ast x') = \varphi(\fn) \ast \Phi(x')$$
for some $\Phi(x') \in X_{\bL}$ --- here, we have used Proposition \ref{isoJs}. The latter statement is equivalent to 
$\Phi(x)$ belonging to the range of $e_{\bL,\varphi(\fn)}$. \end{proof}

\begin{prop} \label{gab}
An isomorphism $\varphi$ of QSM-systems $(A_{\bK},\sigma_{\bK})$ and $(A_{\bL},\sigma_{\bL})$ induces a topological group isomorphism 
$$ \tilde{\Phi} := \Phi\cdot \Phi(1)^{-1} \, : \, G_{\bK}^{\ab} \isomto G_{\bL}^{\ab}. $$
\end{prop}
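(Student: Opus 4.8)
\textbf{Plan of proof for Proposition \ref{gab}.} The key point is that the homeomorphism $\Phi \colon X_{\bK} \isomto X_{\bL}$ from Proposition \ref{isoJs} need not send the ``base point'' structure to the base point structure, but we can correct for this by translation. Recall that $X_{\bK}^*_{,\fn} \cong G_{\bK,\fn}^{\ab}$ by \eqref{ster}; passing to the limit over $\fn$, the subspace $X_{\bK}^1 \cap \{\rho \in \hat\cO_{\bK}^*\}$ -- i.e. the image of $G_{\bK}^{\ab} \times \hat\cO_{\bK}^* \to X_{\bK}$ -- is naturally a torsor under $G_{\bK}^{\ab}$ acting by $\gamma \cdot [(\gamma_0,\rho)] = [(\gamma\gamma_0,\rho)]$, and in fact $G_{\bK}^{\ab}$ itself once we trivialize by $\rho = 1$. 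First I would show that $\Phi$ carries the ``unit part'' $U_{\bK} := \{[(\gamma,\rho)] \colon \rho \in \hat\cO_{\bK}^*\}$ onto $U_{\bL}$: indeed $x \in U_{\bK}$ iff $x$ is \emph{not} in the range of any $e_{\bK,\fn}$ for $\fn \neq (1)$, equivalently $\fn \ast x' = x$ has no solution for $\fn \neq (1)$; by the preceding Lemma (ranges of $e_{\bK,\fn}$ go to ranges of $e_{\bL,\varphi(\fn)}$) and the fact that $\varphi \colon J_{\bK}^+ \isomto J_{\bL}^+$ is a bijection fixing the unit ideal, this property is preserved by $\Phi$.

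Next, on $U_{\bK}$ the compatibility $\Phi(\fn \ast x) = \varphi(\fn) \ast \Phi(x)$ of Proposition \ref{isoJs} specializes, for $x = [(\gamma,1)]$ and using $\fn \ast [(\gamma,1)] = [(\vartheta_{\bK}(\fn)^{-1}\gamma, \fn)]$ together with the decomposition $X_{\bK}^1 = \coprod_{\fm} \fm \ast X_{\bK}^*$, to statements about the action of $\vartheta_{\bK}(J_{\bK}^+)$. The cleaner route, which I would take, is this: identify $U_{\bK} \cong G_{\bK}^{\ab}$ via $[(\gamma,1)] \leftrightarrow \gamma$ (and likewise for $\bL$), and show that the resulting bijection $\Phi \colon G_{\bK}^{\ab} \to G_{\bL}^{\ab}$ is \emph{affine} for the group structure, i.e. $\Phi(\gamma) = \Phi(1)\cdot h(\gamma)$ for a genuine continuous group homomorphism $h$. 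To get affineness, I would use that $\Phi$ is equivariant for the two $J^+$-actions via $\varphi$, that $\vartheta_{\bK}(J_{\bK}^+)$ is \emph{dense} in $G_{\bK}^{\ab}$ (Chebotarev: Frobenius elements of unramified primes are dense), and that translation by $\vartheta_{\bK}(\fn)^{-1}$ on the $G_{\bK}^{\ab}$-coordinate is intertwined by $\Phi$ with translation by $\vartheta_{\bL}(\varphi(\fn))^{-1}$. Concretely: for $\gamma \in G_{\bK}^{\ab}$ and any ideal $\fn$, tracking $[(\gamma,1)]$ through $\fn \ast (-)$ on both sides shows $\Phi$ sends the translate $\vartheta_{\bK}(\fn)^{-1}\gamma$ (in the appropriate component, which $\Phi$ respects since it respects the ranges of the $e_{\bK,\fm}$) to $\vartheta_{\bL}(\varphi(\fn))^{-1}\Phi_{\fm}(\gamma)$; comparing $\gamma$ and $\gamma' = 1$ gives that $\tilde\Phi := \Phi\cdot\Phi(1)^{-1}$ satisfies $\tilde\Phi(\vartheta_{\bK}(\fn)^{-1}\gamma) = \vartheta_{\bL}(\varphi(\fn))^{-1}\tilde\Phi(\gamma)$, and in particular $\tilde\Phi(\vartheta_{\bK}(\fn)^{-1}) = \vartheta_{\bL}(\varphi(\fn))^{-1}$, so $\tilde\Phi$ is multiplicative on the dense subgroup generated by $\vartheta_{\bK}(J_{\bK}^+)$. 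By continuity of $\tilde\Phi$ (it is a composition of the homeomorphism $\Phi$ with a translation) and density, $\tilde\Phi$ is a continuous group homomorphism; it is bijective with continuous inverse $\widetilde{\Phi^{-1}}$ by the symmetric argument applied to $\varphi^{-1}$, hence a topological group isomorphism $G_{\bK}^{\ab} \isomto G_{\bL}^{\ab}$.

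\textbf{Main obstacle.} The delicate point is bookkeeping with the components: $\Phi$ restricted to $X_{\bK}^1$ need not preserve the ``exactly divisible by $\fm'$'' decomposition in an obvious way, so one must argue that $\Phi$ does map the unit component $X_{\bK}^*$ (the $\fm' = (1)$ piece) to $X_{\bL}^*$ before identifying both with Galois groups -- this is exactly where the Lemma on ranges of $e_{\bK,\fn}$ and the fact that $\varphi$ fixes the unit ideal are used. A second subtlety is that $\Phi(1) \in X_{\bL}$ must itself lie in $U_{\bL} \cong G_{\bL}^{\ab}$ for the translation $\Phi\cdot\Phi(1)^{-1}$ to make sense as a self-map landing in the group; this again follows because $1 \in U_{\bK}$ and $\Phi(U_{\bK}) = U_{\bL}$. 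Once these identifications are in place, the density argument via Chebotarev is routine.
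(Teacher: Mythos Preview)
Your identification $\Phi(U_{\bK})=U_{\bL}$ via the characterization ``not in the range of any $e_{\fn}$ with $\fn\neq(1)$'' is correct and is a nice use of the preceding Lemma. The gap is in the next step, where you try to deduce that $\Phi|_{U_{\bK}}$ is affine from the equivariance $\Phi(\fn\ast x)=\varphi(\fn)\ast\Phi(x)$. The problem is that for $x=[(\gamma,1)]\in U_{\bK}$ one has $\fn\ast x=[(\vartheta_{\bK}(\fn)^{-1}\gamma,\fn)]$, which lies in the stratum $\fn\ast U_{\bK}$, \emph{not} in $U_{\bK}$. So the equivariance only tells you how the restriction of $\Phi$ to the $\fn$-stratum is determined by $\Phi_0:=\Phi|_{U_{\bK}}$; it does not give a second, independent expression for $\Phi_0$ itself. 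Concretely, the identity you write down, $\tilde\Phi(\vartheta_{\bK}(\fn)^{-1}\gamma)=\vartheta_{\bL}(\varphi(\fn))^{-1}\tilde\Phi(\gamma)$, is a statement about $\Phi_0$ evaluated at two \emph{distinct} points of $U_{\bK}$, and nothing you have said relates these: the elements $[(\vartheta_{\bK}(\fn)^{-1}\gamma,1)]$ and $[(\vartheta_{\bK}(\fn)^{-1}\gamma,\fn)]$ are different points of $X_{\bK}$, and the equivariance only controls the latter. In fact, as far as the $J_{\bK}^+$-equivariance on $X_{\bK}^1=\coprod_{\fm}\fm\ast U_{\bK}$ is concerned, $\Phi_0$ could be an arbitrary homeomorphism $G_{\bK}^{\ab}\to G_{\bL}^{\ab}$, with the maps on the other strata then forced by the compatibility.

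The paper's proof avoids this by working not with $U_{\bK}$ but with the subsets $H_{\bK,\fm}=G_{\bK}^{\ab}\times_{\hat\cO_{\bK}^*}\{1_{\fm}\}$, where $1_{\fm}$ is the adele equal to $1$ at primes dividing $\fm$ and \emph{zero} elsewhere. The point is that for $\fn$ coprime to $\fm$ one has $\fn\cdot 1_{\fm}\sim 1_{\fm}$ (zero stays zero, units stay units), so $\fn\ast$ maps $H_{\bK,\fm}$ into itself: here the equivariance genuinely constrains $\Phi|_{H_{\bK,\fm}}$. One then shows $\Phi(\one_{\fm})\Phi(\gamma_1\gamma_2)=\Phi(\gamma_1)\Phi(\gamma_2)$ on the dense subset $\{\gamma_{\fn}=\fn\ast\one_{\fm}:(\fn,\fm)=1\}$, identifies $\Phi(H_{\bK,\fm})$, and finally lets $N(\fm)\to\infty$ so that $\one_{\fm}\to 1$ to obtain the statement on $G_{\bK}^{\ab}$. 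So the missing idea in your sketch is precisely this choice of the ``degenerate'' subsets $H_{\bK,\fm}$ on which the semigroup action closes up.
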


\begin{proof}

Fix an ideal $\fm \in J_{\bK}^+$, and consider the subspace of $X_{\bK}$ given by 
$$ V_{\bK,\fm}:=\bigcap_{(\fm,\fn)=1} \mathrm{Range}(e_{\bK,\fn}) = G_{\bK}^{\ab} \times_{\hat \cO_{\bK}^*} \{(0,\dots,0,\hat\cO_{\bK,\fm},0,\dots,0)\}, $$
with $\hat\cO_{\bK,\fm} = \prod_{\p \mid \fm} \hat\cO_{\bK,\p}$. 
This is mapped by $\Phi$ to 
\begin{eqnarray*} \Phi(V_{\bK,\fm}) &=& \bigcap_{(\fm,\fn)=1} \Phi(\mathrm{Range}(e_{\bK,\fn})) = \bigcap_{(\varphi(\fm),\varphi(\fn))=1} \mathrm{Range}(e_{\bL,\varphi(\fn)})\\ &=&   G_{\bL}^{\ab} \times_{\hat \cO_{\bL}^*} \{(0,\dots,0,\hat\cO_{\bL,\varphi(\fm)},0,\dots,0)\} = V_{\bL,\varphi(\fm)}. \end{eqnarray*}
Now define $1_{\fm}$ to be the integral adele which is $1$ at the prime divisors of $\fm$ and zero elsewhere, and consider the subgroup 
$$H_{\bK,\fm}:=G_{\bK}^{\ab} \times_{\hat \cO_{\bK}^*} \{1_{\fm}\} \subseteq X_{\bK}.$$
By the above, $\Phi(H_{\bK,\fm})$ is a subset of $V_{\bL,\varphi(\fm)}$. 
 
The group $H_{\bK,\fm}$ consists of classes $[(\gamma,1_{\fm})] \sim [(\gamma',1_{\fm})] \iff \exists u \in \hat\cO_{\bK}^*$ with $\gamma'=\vartheta_{\bK}(u)^{-1} \gamma$ and $1_{\fm}=u1_{\fm}$. This last equation means that $u_{\q}=1$ at divisors $\q$ of $\fm$ with no further restrictions, i.e.,  $u \in \prod_{\q \nmid \fm} \hat\cO_{\q}^*$, so that by class field theory
$$ H_{\bK,\fm} \cong G_{\bK}^{\ab} / \vartheta_{\bK}\left(\prod_{\q \nmid \fm} \hat\cO_{\q}^*\right) \cong \mathring{G}_{\bK,\fm}^{\ab}, $$
where $\mathring{G}_{\bK,\fm}^{\ab}$ is the Galois group of the maximal abelian extension of $\bK$ that is unramified \emph{outside} prime divisors of $\fm$. Class field theory implies that $\mathring{G}_{\bK,\fm}^{\ab}$ has a dense subgroup generated by $\vartheta_{\bK}(\fn)$ for $\fn$ running through the ideals $\fn$ that are coprime to $\fm$. Said differently, $H_{\bK,\fm}$ is generated by $\gamma_{\fn}:=[(\vartheta_{\bK}(\fn)^{-1},1_{\fm})]$ for $\fn$ running through the ideals coprime to $\fm$. Write  $\one_{\fm}=[(1,1_{\fm})]$, and $\Phi(\one_{\fm}) = [(x_{\fm},y_{\fm})]$. Since $\fm$ and $\fn$ are coprime, we have $[(\vartheta_{\bK}(\fn)^{-1},1_{\fm})] = [(\vartheta_{\bK}(\fn)^{-1},\fn 1_{\fm})]$, and hence we can write  $\gamma_{\fn} = \fn \ast \one_{\fm}$.

Now for two ideals $\fn_1$ and $\fn_2$ coprime to $\fm$, we can perform the following computation: \begin{eqnarray*}
\Phi(\one_{\fm}) \cdot  \Phi(\gamma_{\fn_1} \cdot \gamma_{\fn_2}) &=& \Phi(\one_{\fm}) \cdot \left( \varphi(\fn_1) \varphi(\fn_2) \ast \Phi(\one_{\fm}) \right) \\ &=& [(\vartheta_{\bL}(\varphi(\fn_1) \varphi(\fn_2))^{-1} x_{\fm}^2, \varphi(\fn_1) \varphi(\fn_2) y_{\fm}^2)] \\ &=& 
 [(\vartheta_{\bL}(\varphi(\fn_1))^{-1} x_{\fm}, \varphi(\fn_1) y_{\fm})] \cdot  [(\vartheta_{\bL}(\varphi(\fn_2))^{-1} x_{\fm}, \varphi(\fn_2) y_{\fm})] \\ &=&  \left( \varphi(\fn_1) \ast \Phi(\one_{\fm}) \right) \cdot \left( \varphi(\fn_2) \ast \Phi(\one_{\fm}) \right)  \\ &=& \Phi(\gamma_{\fn_1}) \cdot \Phi(\gamma_{\fn_2}). 
\end{eqnarray*}
By density, we find that for all $\gamma_1, \gamma_2 \in H_{\bK,\fm}$, we have 
$$ \Phi(\one_{\fm}) \Phi(\gamma_{1} \gamma_{2})  = \Phi(\gamma_{1}) \Phi(\gamma_{2}). $$

We now consider the image $\Phi(H_{\bK,\fm})$. Recall from the computation with ranges at the beginning of the proof that $\Phi(H_{\bK,\fm}) \subseteq V_{\bL,\varphi(\fm)}$. Choose $\fn$ coprime to $\fm$, so also $\varphi(\fn)$ is coprime to $\varphi(\fm)$, so $y_{\fm}$ is zero on the support of $\varphi(\fn)$. Hence 
$$ \Phi(\gamma_{\fn}) = [(\vartheta_{\bL}(\varphi(\fn))^{-1} x_{\fm}, \varphi(\fn) y_{\fm})] = [(\vartheta_{\bL}(\varphi(\fn))^{-1} x_{\fm}, y_{\fm})] \in G_{\bL}^{\ab} \times \{ \Phi(\one_{\fm}) \}. $$ 
By density, we conclude that $$\Phi(H_{\bK,\fm}) = G_{\bL}^{\ab} \times_{\hat\cO_{\bL}^*} \{ \Phi(\one_{\fm}) \}. $$

By enlarging $\fm$, we find that $H_{\bK,\fm}\cong \mathring{G}_{\bK,\fm}^{\ab}$ is a system of exhausting quotient groups of $G_{\bK}^{\ab}$. 
Now observe that $\lim\limits_{N(\fm) \rightarrow +\infty} \one_{\fm} = 1$, so that the continuity of $\Phi$ implies that $\lim\limits_{N({\fm}) \rightarrow +\infty} \Phi(\one_{\fm}) = \Phi(1)$. 
We conclude that $\Phi$ induces a bijective map
$$ \Phi \colon G_{\bK}^{\ab} \times \{ 1 \} \rightarrow G_{\bL}^{\ab} \times \{ \Phi(1) \} $$ with the property that 
 $$ \Phi(1) \Phi(\gamma_{1} \gamma_{2})  = \Phi(\gamma_{1}) \Phi(\gamma_{2}). $$
If we set $\tilde{\Phi}(\gamma):=\Phi(\gamma) \cdot \Phi(1)^{-1},$ we find 
$$ \tilde{\Phi}(\gamma_1 \gamma_2) = \Phi(\gamma_1 \cdot \gamma_2) \Phi(1)^{-1} = \Phi(\gamma_1) \Phi(\gamma_2) \Phi(1)^{-2} = \tilde{\Phi}(\gamma_1) \cdot \tilde{\Phi}(\gamma_2), $$
so $\tilde{\Phi}$ is indeed a group isomorphism. 
\end{proof}

\begin{quote}
\textbf{Convention.} \emph{To simplify notations, we replace the original isomorphism of QSM-systems $\varphi$ (which is induced by the homeomorphism $\Phi^{-1}$ and the group isomorphisms $\varphi=\alpha_1$) by the QSM-isomorphism which is induced instead by the homeomorphism $\tilde{\Phi}^{-1}$ and the $\varphi=\alpha_1$, and from now on, we denote this new QSM-isomorphism by the same letter $\varphi$, so that for the associated $\tilde{\Phi}$, it holds that $\tilde{\Phi}=\Phi$.  } 
\end{quote}

\begin{cor} \label{imageonem}
For all $\fm \in J_{\bK}^+$, it holds true that $\Phi(\one_{\fm}) = \one_{\varphi(\fm)}$. 
\end{cor}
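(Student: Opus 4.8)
The plan is to exhibit $\one_{\fm}$ as the image of the base point $1=[(1,1)]\in X_{\bK}$ under a canonical retraction of $X_{\bK}$ onto the layer $V_{\bK,\fm}$, and to show that $\Phi$ carries this retraction to the corresponding one for $\bL$. Introduce the map $\pi_{\fm}\colon X_{\bK}\to V_{\bK,\fm}$, $[(\gamma,\rho)]\mapsto[(\gamma,\rho|_{\fm})]$, where $\rho|_{\fm}$ keeps the components of $\rho$ at primes dividing $\fm$ and sets the others to zero. Since $(u\rho)|_{\fm}=u(\rho|_{\fm})$ for $u\in\hat\cO_{\bK}^{*}$, this descends to a well-defined continuous map; it is a retraction onto $V_{\bK,\fm}$, and a direct computation shows it commutes with the semigroup action, $\pi_{\fm}(\fn\ast x)=\fn\ast\pi_{\fm}(x)$, and with the symmetries, $\pi_{\fm}\circ\epsilon_{\gamma}=\epsilon_{\gamma}\circ\pi_{\fm}$. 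Because $1|_{\fm}=1_{\fm}$ we have $\one_{\fm}=\pi_{\fm}(1)$, and likewise $\one_{\varphi(\fm)}=\pi_{\varphi(\fm)}(1)$ in $X_{\bL}$.

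First I would record the auxiliary fact that $\Phi$ intertwines the symmetry actions, $\Phi\circ\epsilon_{\gamma}=\epsilon_{\Phi(\gamma)}\circ\Phi$ for all $\gamma\in G_{\bK}^{\ab}$. After the Convention we have $\Phi(1)=1$, and Proposition~\ref{gab} then identifies $\Phi$ on $G_{\bK}^{\ab}\times_{\hat\cO_{\bK}^{*}}\hat\cO_{\bK}^{*}\cong G_{\bK}^{\ab}$ with the group isomorphism $\Phi\colon G_{\bK}^{\ab}\isomto G_{\bL}^{\ab}$; on this subspace $\epsilon_{\gamma}$ is just left translation, so the identity holds there. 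Both sides commute with the (injective) semigroup action, and $\bigcup_{\fn}\fn\ast\bigl(G_{\bK}^{\ab}\times_{\hat\cO_{\bK}^{*}}\hat\cO_{\bK}^{*}\bigr)$ is dense in $X_{\bK}$ — it consists of all classes $[(\gamma,\rho)]$ with $\rho$ of finitely supported valuation profile, which are dense. Hence the identity extends to all of $X_{\bK}$ by continuity; the same holds for $\Phi^{-1}$, and $\Phi,\Phi^{-1}$ also intertwine the semigroup actions via $\varphi,\varphi^{-1}$ by Proposition~\ref{isoJs}.

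The crux is that $\pi_{\fm}$ is the \emph{unique} continuous retraction $r\colon X_{\bK}\to V_{\bK,\fm}$ commuting with the semigroup action and the symmetries. Such an $r$ is determined by the single value $z:=r(1)\in V_{\bK,\fm}$, since each point of the dense orbit can be written $[(\gamma,s(\fn))]=\epsilon_{\gamma\vartheta_{\bK}(\fn)}(\fn\ast 1)$, so equivariance forces $r([(\gamma,s(\fn))])=\epsilon_{\gamma\vartheta_{\bK}(\fn)}(\fn\ast z)$. To pin down $z$, choose ideals $\fn_{i}$ coprime to $\fm$ with the valuation at every prime not dividing $\fm$ tending to infinity, so that $s(\fn_{i})\to 1_{\fm}$ and hence $[(1,s(\fn_{i}))]\to\one_{\fm}$. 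Writing $z=[(\zeta,\eta)]$ with $\eta$ supported on $\fm$, and using that $s(\fn_{i})$ equals $1$ at every prime dividing $\fm$ while $\eta$ vanishes off $\fm$, we get $s(\fn_{i})\eta=\eta$, whence $r([(1,s(\fn_{i}))])=\epsilon_{\vartheta_{\bK}(\fn_{i})}(\fn_{i}\ast z)=[(\zeta,\eta)]=z$ for every $i$; letting $i\to\infty$ and using that $r$ fixes $\one_{\fm}\in V_{\bK,\fm}$ gives $z=\one_{\fm}$, so $r=\pi_{\fm}$. Now $\Phi^{-1}\circ\pi_{\varphi(\fm)}\circ\Phi$ is a continuous retraction of $X_{\bK}$ onto $\Phi^{-1}(V_{\bL,\varphi(\fm)})=V_{\bK,\fm}$ — the equality of spaces because $V_{\bK,\fm}=\bigcap_{(\fn,\fm)=1}\mathrm{Range}(e_{\bK,\fn})$ is preserved by $\Phi$, exactly as in the proof of Proposition~\ref{gab} — and it commutes with both actions by the auxiliary fact, so by uniqueness it equals $\pi_{\fm}$. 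Evaluating at $1$ and using $\Phi(1)=1$ yields $\Phi^{-1}(\one_{\varphi(\fm)})=\Phi^{-1}(\pi_{\varphi(\fm)}(1))=\pi_{\fm}(1)=\one_{\fm}$, i.e. $\Phi(\one_{\fm})=\one_{\varphi(\fm)}$.

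The step I expect to be the main obstacle is the uniqueness of the equivariant retraction, and within it the identification $r(1)=\one_{\fm}$: the retraction property only constrains $r$ on $V_{\bK,\fm}$, which is disjoint from the orbit-closure that otherwise determines $r$, so one genuinely needs the explicit approximation $[(1,s(\fn_{i}))]\to\one_{\fm}$ by orbit points with $\fn_{i}$ coprime to $\fm$ to bridge the two; the remainder is bookkeeping with the balancing relation and continuity.
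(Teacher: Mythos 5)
Your proof is correct, though it takes a genuinely different route from the paper. The paper's argument is a short direct computation: it writes $\Phi(\one_{\fm})=[(x_{\fm},y_{\fm})]$, uses the idempotency $\one_{\fm}^2=\one_{\fm}$ together with the fact (from Proposition \ref{gab}) that $\Phi$ is a group homomorphism on $H_{\bK,\fm}$ to get $x_{\fm}^2=\vartheta_{\bL}(u)^{-1}x_{\fm}$, $y_{\fm}^2=uy_{\fm}$ for some unit $u$, and then deduces that $y_{\fm}$ is a local unit at the divisors of $\varphi(\fm)$ by a range-of-$e_{\fr}$ argument (if it weren't, $\one_{\fm}$ would lie in $\mathrm{Range}(e_{\varphi^{-1}(\fr)})$ for a prime $\varphi^{-1}(\fr)\mid\fm$, which it doesn't); cancellation then gives the claim directly. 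You instead characterize $\one_{\fm}$ intrinsically as the image of the base point under the unique continuous retraction onto $V_{\bK,\fm}$ that is equivariant for both the $J_{\bK}^{+}$-action and the $\epsilon_{\gamma}$-symmetries, and you transport this characterization through $\Phi$. The equivariance of $\Phi$ with respect to $\epsilon_{\gamma}$ (via Propositions \ref{gab}, \ref{isoJs} and density), the density of the orbit $X^1_{\bK}$ of $1$, and the approximation $[(1,s(\fn_i))]\to\one_{\fm}$ by orbit points with $\fn_i$ coprime to $\fm$ — which you rightly flag as the crux — are all correctly used; in particular the balancing computation $s(\fn_i)\eta=\eta$ pinning down $r(1)=\one_{\fm}$ is sound. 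Your proof is longer and introduces auxiliary structure the paper does not need, but it is more conceptual: it isolates what $\one_{\fm}$ \emph{is} as an equivariant invariant of the dynamical system, rather than verifying the identity by hand, and it makes the role of the $\Phi$-equivariance of the layers $V_{\bK,\fm}$ fully explicit.
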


\begin{proof}
Set $\Phi(\one_{\fm}) = [(x_{\fm},y_{\fm})]$. Since $\Phi$ is a group isomorphism $H_{\bK} \rightarrow \Phi(H_{\bK})$, we find that $\Phi(\one_{\fm}^2)=\Phi(\one_{\fm})$; whence $$[(x^2_{\fm},y^2_{\fm})]=[(x_{\fm},y_{\fm} )], $$ i.e., there exists a unit $u \in \hat\cO_{\bL}^*$ with \begin{equation} \label{cancel} x_{\fm}^2 = \vartheta_{\bL}(u)^{-1} x_{\fm} \mbox{ and } y^2_{\fm} = u y_{\fm}.\end{equation}

Now $y_{\fm}$ is zero outside prime divisors of $\varphi(\fm)$. We claim that $y_{\fm}$ is a local unit at the primes dividing $\varphi(\fm)$.  If not, then 
$\Phi(\one_{\fm}) \in \mathrm{Range}(e_{\fr})$ for some prime ideal $\fr \in J_{\bL}^+$ which divides $\varphi(\fm)$. This is equivalent to the existence of $x\in X_{\bL}$ such that 
$\Phi(\one_{\fm})= \fr \ast x$.   This implies that $$\one_{\fm} = \Phi^{-1}(\fr \ast x) = \varphi^{-1}(\fr) \ast \Phi^{-1}(x).$$
 We conclude from this that $\one_{\fm} \in \mathrm{Range}(e_{\varphi^{-1}(\fr)})$. Now we observe that $\varphi^{-1}(\fr)$ is a prime ideal above a rational prime dividing $\fm$. In particular, it is not a unit at some prime divisor of $\fm$. But this contradicts the fact that all non-zero adelic components of $\one_{\fm}$ are such units. We conclude that $y_{\fm} \in \hat\cO_{\bL,\varphi(\fm)}^*$ is a unit. 

Hence in \eqref{cancel}, we can cancel $x_{\fm}$ (which lies in the group $G_{\bL}^{\ab}$) and $y_{\fm}$ locally at divisors of $\varphi(\fm)$, to find that 
$$ x_{\fm} = \vartheta_{\bL}(u)^{-1} \mbox{ and } y_{\fm} = u 1_{\varphi(\fm)}, $$
hence $$\Phi(\one_{\fm}) = [(x_{\fm},y_{\fm})] = [(\vartheta_{\bL}(u)^{-1}, u 1_{\varphi(\fm)})]=[(1,1_{\varphi(\fm)})] = \one_{\varphi(\fm)}. $$ 
\end{proof}

\section{Layers, ramification and $L$-series} \label{respect}

In this section, we conclude from the previous section that  $\varphi$ ``preserves ramification'', and we deduce from this that $\varphi$ induces an L-isomorphism (viz., an identification of abelian $L$-series). We will use the symbol $\Phi$ also for the group isomorphism that $\Phi \colon G_{\bK}^{\ab} \isomto G_{\bL}^{\ab}$ induces on quotient groups, i.e., if $N$ is a subgroup of $G_{\bK}^{\ab}$, then we let $\Phi$ also denote the isomorphism $$ G_{\bK}^{\ab} / N \isomto G_{\bL}^{\ab}/\Phi(N)$$ induced by $\Phi$. 

\begin{prop} \label{respectramification}
The group isomorphisms $ \Phi \, : \, G_{\bK}^{\ab} \isomto G_{\bL}^{\ab}$ and $\varphi \, : \, J_{\bK}^+ \isomto J_{\bL}^+$ respect ramification in the sense that if $\bK'=(\bK^{\ab})^N/\bK$ is a finite extension, and we set $\bL':=(\bL^{\ab})^{\Phi(N)}$ the corresponding extension of $\bL$, then 
$$ \p \mbox{ ramifies in } \bK'/\bK \iff \varphi(\p) \mbox{ ramifies in } \bL'/\bL $$
for every prime $\p \in J_{\bK}^+$. 
Hence
$$ \Phi(G_{\bK,\p}^{\ab}) = G_{\bL,\varphi(\p)}^{\ab} $$
for every prime $\p \in J_{\bK}^+$. 
\end{prop}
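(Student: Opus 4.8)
The plan is to reduce the whole statement to a single group-theoretic identity and then prove that identity using the dense $G^{\ab}$-orbit already analysed in Proposition~\ref{gab}.

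\emph{Step 1: reduction to inertia subgroups.} I would first observe that both the ramification assertion and the displayed consequence follow from
\[ \Phi\bigl(I_\p^{\bK}\bigr)=I_{\varphi(\p)}^{\bL},\qquad I_\p^{\bK}:=\vartheta_{\bK}\bigl(\hat\cO_{\bK,\p}^*\bigr)=\ker\bigl(G_{\bK}^{\ab}\twoheadrightarrow G_{\bK,\p}^{\ab}\bigr), \]
where $\hat\cO_{\bK,\p}^*$ is viewed inside the ideles as the units at $\p$ and $1$ elsewhere. By class field theory a prime $\p$ is unramified in a finite abelian extension $\bK'=(\bK^{\ab})^N/\bK$ exactly when $I_\p^{\bK}\subseteq N$; since $\Phi$ is a group isomorphism and $\bL'=(\bL^{\ab})^{\Phi(N)}$, the displayed equality turns the condition ``$I_\p^{\bK}\subseteq N$'' into ``$I_{\varphi(\p)}^{\bL}\subseteq\Phi(N)$'', which is the ramification statement, and $\Phi$ descends to an isomorphism $G_{\bK}^{\ab}/I_\p^{\bK}\isomto G_{\bL}^{\ab}/I_{\varphi(\p)}^{\bL}$, which is precisely $\Phi(G_{\bK,\p}^{\ab})=G_{\bL,\varphi(\p)}^{\ab}$.

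\emph{Step 2: inertia as an intersection of stabilizers.} Next I would express $I_\p^{\bK}$ through the QSM-data. Let $G_{\bK}^{\ab}$ act on $X_{\bK}$ by $\gamma\cdot[(\gamma',\rho)]=[(\gamma\gamma',\rho)]$ (this is the action underlying the symmetries $\epsilon_\gamma$, and it commutes with the ideal action $\ast$ because $G_{\bK}^{\ab}$ is abelian). A short balancing computation shows that for every ideal $\fm$ the stabilizer of $\one_\fm=[(1,1_\fm)]$ is the closed subgroup $\mathrm{Stab}(\one_\fm)=\vartheta_{\bK}\bigl(\prod_{\q\nmid\fm}\hat\cO_{\bK,\q}^*\bigr)=\ker(G_{\bK}^{\ab}\twoheadrightarrow\mathring G_{\bK,\fm}^{\ab})$. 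As $\fm$ runs through the ideals coprime to $\p$ the compact subgroups $\prod_{\q\nmid\fm}\hat\cO_{\bK,\q}^*$ form a filtered family with intersection $\hat\cO_{\bK,\p}^*$, and since $\vartheta_{\bK}$ is continuous it commutes with such an intersection (nested compacta); hence
\[ I_\p^{\bK}=\bigcap_{(\fm,\p)=1}\mathrm{Stab}_{G_{\bK}^{\ab}}(\one_\fm). \]

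\emph{Step 3: $\Phi$ intertwines the $G^{\ab}$-actions, and conclusion.} The point that looks like the main obstacle is to prove $\Phi(\gamma\cdot x)=\Phi(\gamma)\cdot\Phi(x)$ for all $\gamma\in G_{\bK}^{\ab}$, $x\in X_{\bK}$, where on the right $\Phi$ is the group isomorphism $G_{\bK}^{\ab}\isomto G_{\bL}^{\ab}$ of Proposition~\ref{gab}; a priori this seems to require understanding the full interaction of $\Phi|_{G_{\bK}^{\ab}}$ with the ramification filtration. But the commutativity of $\cdot$ with $\ast$ makes it routine: for $x$ in the dense set $X_{\bK}^1=\bigcup_{\fm}\fm\ast(G_{\bK}^{\ab}\times\{1\})$ of Corollary~\ref{dude}, write $x=\fm'\ast y$ with $y\in G_{\bK}^{\ab}\times\{1\}$, and compute, using in turn that $\cdot$ commutes with $\ast$, that $\Phi$ is a group isomorphism on $G_{\bK}^{\ab}\times\{1\}$, and that $\Phi$ intertwines $\ast$ (Proposition~\ref{isoJs}, which persists after the normalization convention of Proposition~\ref{gab} because $\cdot$ commutes with $\ast$),
\[ \Phi(\gamma\cdot x)=\Phi\bigl(\fm'\ast(\gamma\cdot y)\bigr)=\varphi(\fm')\ast\bigl(\Phi(\gamma)\cdot\Phi(y)\bigr)=\Phi(\gamma)\cdot\Phi(\fm'\ast y)=\Phi(\gamma)\cdot\Phi(x); \]
both sides are continuous in $x$ and $X_{\bL}$ is Hausdorff, so the identity extends to all of $X_{\bK}$. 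Together with $\Phi(\one_\fm)=\one_{\varphi(\fm)}$ (Corollary~\ref{imageonem}) this yields $\Phi(\mathrm{Stab}(\one_\fm))=\mathrm{Stab}(\one_{\varphi(\fm)})$ for every $\fm$. Finally, as $\varphi$ is a norm-preserving semigroup isomorphism it carries the ideals of $\bK$ coprime to $\p$ bijectively onto the ideals of $\bL$ coprime to $\varphi(\p)$, so intersecting the previous identity over these $\fm$ and using that $\Phi$ is a bijection gives
\[ \Phi\bigl(I_\p^{\bK}\bigr)=\bigcap_{(\fm,\p)=1}\Phi\bigl(\mathrm{Stab}(\one_\fm)\bigr)=\bigcap_{(\fm',\varphi(\p))=1}\mathrm{Stab}(\one_{\fm'})=I_{\varphi(\p)}^{\bL}, \]
which by Step~1 finishes the proof. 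The only genuinely delicate points are thus the balancing/compactness identity of Step~2 and making sure the normalization conventions are threaded through correctly.
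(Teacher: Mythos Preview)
Your proof is correct. Both you and the paper rely on the same core input from Proposition~\ref{gab} and Corollary~\ref{imageonem}, but you organize the argument differently.

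The paper's proof works with the \emph{quotients} $\mathring{G}_{\bK,\fn}^{\ab}$ rather than with the inertia subgroups: it invokes directly (from the proof of Proposition~\ref{gab} together with Corollary~\ref{imageonem}) that $\Phi$ descends to an isomorphism $\mathring{G}_{\bK,\fn}^{\ab}\isomto\mathring{G}_{\bL,\varphi(\fn)}^{\ab}$, i.e.\ that $\Phi$ carries $\Gal(\bK^{\ab}/\bK_{\fn})$ to $\Gal(\bL^{\ab}/\bL_{\varphi(\fn)})$. It then expresses ``$\bK'$ is ramified exactly at $\p_1,\dots,\p_r$'' as the pair of containment conditions $N\supseteq\Gal(\bK^{\ab}/\bK_{\p_1\cdots\p_r})$ and $N\not\supseteq\Gal(\bK^{\ab}/\bK_{\p_1\cdots\widehat{\p_i}\cdots\p_r})$, applies $\Phi$, and reads off the analogous statement for $\bL'$. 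This is shorter: no intersection over infinitely many $\fm$ is needed, and no separate equivariance statement is formulated.

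Your route via the \emph{kernels} $I_\p^{\bK}$ is the dual picture. Two features are worth noting. First, your global equivariance identity $\Phi(\gamma\cdot x)=\Phi(\gamma)\cdot\Phi(x)$ is a genuine statement that the paper does not isolate here; the paper proves essentially this identity later, inside the proof of Proposition~\ref{integers} (the lemma showing $\varphi(\varepsilon_s)=\varepsilon_{\varphi(s)}$). So your argument is more self-contained at this point in the exposition, at the cost of anticipating a computation that the paper postpones. Second, your realization of $I_\p^{\bK}$ as a filtered intersection of stabilizers, together with the nested-compacta argument that $\vartheta_{\bK}$ commutes with such an intersection, is a clean device, but it is not needed in the paper's setup: once one knows that $\Phi$ carries the single kernel $\Gal(\bK^{\ab}/\bK_{\fn})$ to $\Gal(\bL^{\ab}/\bL_{\varphi(\fn)})$ for every $\fn$, the ramification statement for finite $\bK'/\bK$ follows by choosing $\fn$ to be the product of the ramified primes.
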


\begin{proof}
In the previous section, we saw that $\Phi$ induces an isomorphism
$$ \Phi \, : \, \mathring{G}_{\bK,\fn}^{\ab} \isomto \mathring{G}_{\bL,\varphi(\fn)}^{\ab},$$
of Galois groups of the maximal abelian extension $\bK_{\fn}$ that is unramified outside the prime divisors of $\fn$ and $\bL_{\varphi(\fn)}$ that is unramified outside $\varphi(\fn)$, respectively. 

Now let $\bK'=(\bK^{\ab})^N$ be a finite extension of $\bK$ ramified precisely above $$\p_1,\dots,\p_r \in J_{\bK}^+,$$ so $\bK' \subseteq \bK_{\p_1\cdots \p_r}$ and $$ \left\{ \begin{array}{ll} N \supseteq \mathrm{Gal}(\bK^{\ab}/\bK_{\p_1\dots \p_r}) & \\ N \not \supseteq \mathrm{Gal}(\bK^{\ab}/\bK_{\p_1\dots \widehat{\p_i} \dots \p_r}) & (i=1,\dots,r) \end{array} \right.$$ (where $\widehat\p$ means to leave out $\p$ from the product). 
Applying $\Phi$ and using the above result, we find that this is equivalent to 
$$ \left\{ \begin{array}{ll} \Phi(N) \supseteq \mathrm{Gal}(\bL^{\ab}/\bL_{\varphi(\p_1)\dots \varphi(\p_r)}) & \\ \Phi(N) \not \supseteq \mathrm{Gal}(\bL^{\ab}/\bL_{\varphi(\p_1)\dots \widehat{\varphi(\p_i)} \dots \varphi(\p_r)}) & (i=1,\dots,r) \end{array} \right.$$
 Thus, $\bL':=(\bL)^{\Phi(N)}$ is contained in $\bL_{\varphi(\p_1) \cdots \varphi(\p_r)}$ but not in any $\bL_{\varphi(\p_1) \cdots \widehat{\varphi(\p_i)}\cdots \varphi(\p_r)}$, and this means that $\bL'/\bL$ is ramified precisely above $\varphi(\p_1),\dots,\varphi(\p_r)$. 
\end{proof}

We now give a direct proof of the fact that (ii) implies (iii) in Theorem \ref{main2}. 

\begin{prop}\label{idLfunct}
An isomorphism $\varphi: (A_{\bK},\sigma_{\bK}) \to (A_{\bL}, \sigma_{\bL})$
induces an identification of $L$-series with characters, i.e., there is a group isomorphism of character groups $$\psi \, : \, \widehat{G}_{\bK}^{\ab} \isomto \widehat{G}_{\bL}^{\ab}$$ such that $$L_{\bK}(\chi,s)=L_{\bL}(\psi(\chi),s)$$ for all $\chi \in \widehat{G}_{\bK}^{\ab}$.
\end{prop}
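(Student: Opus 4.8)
The plan is to use the group isomorphism $\varphi : J_{\bK}^+ \isomto J_{\bL}^+$ and the topological group isomorphism $\Phi : G_{\bK}^{\ab} \isomto G_{\bL}^{\ab}$ obtained in Propositions~\ref{isoJs} and \ref{gab}, together with the ramification-compatibility of Proposition~\ref{respectramification}, to transport characters and then match $L$-series coefficient by coefficient. First I would let $\psi$ be the transpose (Pontrjagin dual) of $\Phi^{-1}$, i.e.\ for $\chi \in \widehat G_{\bK}^{\ab}$ set $\psi(\chi) := \chi \circ \Phi^{-1} \in \widehat G_{\bL}^{\ab}$; this is manifestly a group isomorphism of character groups because $\Phi$ is a topological group isomorphism. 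The point is then to show $L_{\bK}(\chi,s) = L_{\bL}(\psi(\chi),s)$ for every $\chi$.

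The key step is to identify the conductors: I would show that $\f_{\psi(\chi)} = \varphi(\f_\chi)$. Indeed, $\chi$ factors through $G_{\bK,\fn}^{\ab}$ precisely when $\f_\chi \mid \fn$; by Proposition~\ref{respectramification} (or rather its consequence $\Phi(G_{\bK,\p}^{\ab}) = G_{\bL,\varphi(\p)}^{\ab}$, extended multiplicatively via the $\mathring G$ versus $G_{\fn}$ identification from Section~\ref{respect}), $\psi(\chi) = \chi\circ\Phi^{-1}$ factors through $G_{\bL,\varphi(\fn)}^{\ab}$ iff $\chi$ factors through $G_{\bK,\fn}^{\ab}$, so the set of $\varphi(\fn)$ through which $\psi(\chi)$ factors is exactly the image under $\varphi$ of the set of $\fn$ through which $\chi$ factors; since $\varphi$ preserves divisibility (it is a semigroup isomorphism) the minimal such ideal on the right is $\varphi(\f_\chi)$. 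Once conductors match, I would compare Euler factors: for a prime $\p \nmid \f_\chi$, the factor of $L_{\bK}(\chi,s)$ at $\p$ is $(1 - \chi(\vartheta_{\bK}(\p)) N_{\bK}(\p)^{-s})^{-1}$. By Proposition~\ref{isoJs}, $N_{\bL}(\varphi(\p)) = N_{\bK}(\p)$, and since the Artin map is compatible with $\Phi$ up to the ramification bookkeeping already in place — concretely, on the quotient $G_{\bK,\p}^{\ab}$ the curved-arrow Frobenius $\vartheta_{\bK}(\p)$ is canonical and, by Corollary~\ref{imageonem} together with the identity $\gamma_{\fn} = \fn \ast \one_{\fm}$ and $\Phi(\fn \ast x) = \varphi(\fn)\ast\Phi(x)$ from the proof of Proposition~\ref{gab}, one reads off $\Phi(\vartheta_{\bK}(\p)) = \vartheta_{\bL}(\varphi(\p))$ in $G_{\bL,\varphi(\p)}^{\ab}$ — we get $\psi(\chi)(\vartheta_{\bL}(\varphi(\p))) = \chi(\Phi^{-1}\vartheta_{\bL}(\varphi(\p))) = \chi(\vartheta_{\bK}(\p))$. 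Hence the $\p$-Euler factor of $L_{\bK}(\chi,s)$ equals the $\varphi(\p)$-Euler factor of $L_{\bL}(\psi(\chi),s)$, and since $\varphi$ is a bijection on primes preserving norms, the full Euler products agree.

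Alternatively, and perhaps more in the spirit of the QSM setup, I would argue via the KMS states: by equation~\eqref{KMSlowa}, $\omega_{\beta,\gamma}(f_\chi) = \zeta_{\bK}(\beta)^{-1}\chi(\gamma)^{-1} L_{\bK}(\chi,\beta)$, the element $f_\chi \in C(X_{\bK})$ is characterized intrinsically (up to the choices already fixed) by how the ideal action scales it, cf.\ \eqref{fchi2}, and $\varphi$ matches these actions by Proposition~\ref{isoJs}; since $\varphi$ also preserves the partition function $\zeta_{\bK} = \zeta_{\bL}$ by Proposition~\ref{isotoareq} and the KMS states by Lemma~\ref{basic}, evaluating $\omega_{\beta,\gamma} \circ \varphi$ against $f_\chi$ two ways yields the identity of $L$-series on the half-plane $\beta > 1$, and then by analytic continuation for all $s$.

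The main obstacle I expect is the bookkeeping around the Artin map: $\vartheta_{\bK}(\p)$ is only defined up to units and is genuinely canonical only in the quotient $G_{\bK,\p}^{\ab}$, so one must be careful that the identity $\Phi(\vartheta_{\bK}(\p)) = \vartheta_{\bL}(\varphi(\p))$ is asserted in the right quotient group and that $\psi(\chi)$ really is evaluated only there (which is legitimate since $\f_{\psi(\chi)} = \varphi(\f_\chi)$ is coprime to $\varphi(\p)$ exactly when $\f_\chi$ is coprime to $\p$). Everything else — that $\psi$ is a group isomorphism, that norms match, that $\varphi$ is a divisibility-preserving bijection on primes — is already in hand from the earlier propositions.
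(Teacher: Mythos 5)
Your proposal is correct and follows essentially the same route as the paper: define $\psi$ as the Pontrjagin transpose of $\Phi$, match conductors via Proposition~\ref{respectramification}, show the Artin maps intertwine, and then use norm-compatibility from Proposition~\ref{isoJs} to identify the $L$-series (the paper does this term-by-term via the claim $\varphi(f_\chi)=f_{\psi(\chi)}$, you via Euler factors --- the same content). Your alternative KMS-state argument is precisely what the paper places in the remark immediately following the proposition, and your explicit extraction of $\Phi(\vartheta_{\bK}(\p))=\vartheta_{\bL}(\varphi(\p))$ from the proof of Proposition~\ref{gab} and Corollary~\ref{imageonem} is a slightly more spelled-out version of the step the paper compresses into ``$\varphi(f_\chi)=f_{\psi(\chi)}$ now implies.''
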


\begin{proof}
By Proposition \ref{gab}, we have an isomorphism $\Phi \, : \, G_{\bK}^{\ab} \isomto G_{\bL}^{\ab}$, hence by Pontrjagin duality, an  identification of character groups $$ \psi \, : \, \widehat{G}_{\bK}^{\ab} \isomto \widehat{G}_{\bL}^{\ab}.$$ 

A character $\chi \in \widehat{G}_{\bK}^{\ab}$ extends to a function $f_\chi$ as in Section \ref{kmschar}. We claim that the function corresponding to $\psi(\chi)$ is  $\varphi(f_\chi)=f_{\psi(\chi)}$. To prove this, it suffices to check that divisors of the conductor $\f_{\psi(\chi)}$ of $\psi(\chi)$ are the same as divisors of $\varphi(\f_{\chi})$. But $\p$ is coprime to $\f_{\chi}$ precisely if $\chi$ factors over $G_{\bK,\p}^{\ab}$, and by the previous proposition, we find that this is equivalent to $\psi(\chi)=\Phi^*(\chi)$ factoring over $\Phi(G_{\bK,\p}^{\ab})=G_{\bL,\varphi(\p)}^{\ab}$, which in its turn means that $\varphi(\p)$ is coprime to the conductor $\f_{\psi(\chi)}$ of $\psi(\chi)$:
$$ (\p,\f_{\chi})=1 \iff (\varphi(\p),\f_{\psi(\chi)})=1. $$

The fact that $\varphi(f_\chi)=f_{\psi(\chi)}$ now implies that 
$$ \chi(\vartheta_{\bK}(\fn)) = \psi(\chi)(\vartheta_{\bL}(\varphi(\fn))) $$
for all $\chi \in \widehat{G}_{\bK}^{\ab}$ and $\fn \in J_{\bK}^+$ such that $\fn$ is coprime to the conductor of $\chi$. By the intertwining of time evolution, we also have compatibility with norms $$ N_{\bK}(\fn) = N_{\bL}(\varphi(\fn)) $$
for all $\fn \in J_{\bK}^+$. Hence we can compute
\begin{eqnarray*} L_{\bK}(\chi,s) &=& \sum_{\substack{\fn \in J_{\bK}^+\\(\fn,\f_{\chi})=1}} \frac{\chi(\vartheta_{\bK}(\fn))}{N_{\bK}(\fn)^s} = \sum_{\substack{\varphi(\fn) \in J_{\bL}^+\\(\fn,\f_{\chi})=1}} \frac{\psi(\chi)(\vartheta_{\bL}(\varphi(\fn)))}{N_{\bL}(\varphi(\fn))^s}\\ &=& \sum_{\substack{\fm \in J_{\bL}^+\\(\fm,\f_{\psi(\chi)})=1}} \frac{\psi(\chi)(\vartheta_{\bL}(\fm))}{N_{\bL}(\fm)^s} = L_{\bL}(\psi(\chi),s). \end{eqnarray*}
\end{proof}

\begin{remark}
The above result is a manifestation of the matching of $\KMS_\beta$ states. Namely, our isomorphism of QSM-systems gives $\zeta_{\bK}(\beta)=\zeta_{\bL}(\beta)$ (Proposition \ref{isotoareq}), and an isomorphism of character groups $\psi$ as in the previous proof. Lemma \ref{basic} implies that pullback is an isomorphism of $\KMS_\beta$-states. Now for $\beta>1$, such a state $\omega_{\gamma,\beta}^{\bL}$ on $A_{\bL}$ (corresponding to $\gamma \in G_{\bL}^{\ab}$) is pulled back to a similar state 
$$ \omega_{\gamma,\beta}^{\bL}(\varphi(f)) = \omega_{\tilde{\gamma},\beta}^{\bK}(f), $$
for some $\tilde{\gamma} \in G_{\bK}^{\ab}$ and every $f \in A_{\bK}$. We can choose in particular $f=f_\chi$ for a character $\chi \in \widehat G_{\bK}^{\ab}$, and then the above identity becomes
$$ \frac{1}{\zeta_{\bL}(\beta)\psi(\chi)(\gamma)}\ L_{\bL}(\psi(\chi),\beta) =  \frac{1}{\zeta_{\bK}(\beta)\chi(\tilde \gamma)}\ L_{\bK}(\chi,\beta). $$
If we now compare the constant coefficients and use arithmetic equivalence, we find  $ \psi(\chi)(\gamma)=\chi(\tilde \gamma)$, and so finally the identity of these particular $\KMS$-states indeed reads
$$ L_{\bL}(\psi(\chi),\beta) =  L_{\bK}(\chi,\beta). $$
\end{remark}

\section{From QSM-isomorphisms to isomorphism of unit ideles and ideles}

\begin{prop} \label{unitideles}
Let $\bK$ and $\bL$ denote two number fields admitting an isomorphism $\varphi$ of their QSM-systems $(A_{\bK},\sigma_{\bK})$ and $(A_{\bL},\sigma_{\bL})$. Let $\p \in J_{\bK}^+$ denote a prime ideal. Then $\varphi$ induces a group isomorphism of local units $$ \varphi \, : \, \hat\cO_{\bK,\p}^* \isomto \hat\cO_{\bL, \varphi(\p)}^* $$ and of unit ideles $$\varphi \, : \, \hat{\cO}^*_{\bK} \isomto \hat{\cO}^*_{\bL}.$$
\end{prop}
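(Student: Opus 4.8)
The plan is to recover the local units $\hat\cO_{\bK,\p}^*$ from the QSM-system as a group of symmetries attached to the prime $\p$, intrinsically enough that the isomorphism $\varphi$ must carry them to the corresponding symmetries for $\varphi(\p)$ in $\bL$. Recall from Section~\ref{hairy} that $X^*_{\bK,\p} \cong G_{\bK}^{\ab}/\vartheta_{\bK}(\hat\cO_{\bK,\p}^*) = G_{\bK,\p}^{\ab}$, so the local units appear as the kernel of the projection $G_{\bK}^{\ab} \twoheadrightarrow G_{\bK,\p}^{\ab}$, i.e.\ as the inertia subgroup $I_{\bK,\p} := \vartheta_{\bK}(\hat\cO_{\bK,\p}^*) \le G_{\bK}^{\ab}$, which by local class field theory is isomorphic to $\hat\cO_{\bK,\p}^*$ (via $\vartheta_{\bK}$ restricted to the local unit ideles, which is injective on each local factor). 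So the first step is: using Proposition~\ref{respectramification}, which gives $\Phi(G_{\bK,\p}^{\ab}) = G_{\bL,\varphi(\p)}^{\ab}$, and the fact (Convention after Proposition~\ref{gab}) that $\Phi$ is now a genuine group isomorphism $G_{\bK}^{\ab} \isomto G_{\bL}^{\ab}$, deduce that $\Phi$ maps the kernel of $G_{\bK}^{\ab} \to G_{\bK,\p}^{\ab}$ onto the kernel of $G_{\bL}^{\ab} \to G_{\bL,\varphi(\p)}^{\ab}$, i.e.\ $\Phi(I_{\bK,\p}) = I_{\bL,\varphi(\p)}$.

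The second step is to upgrade this isomorphism of inertia groups to an isomorphism of local unit \emph{idele} groups, that is, to identify $I_{\bK,\p}$ canonically with $\hat\cO_{\bK,\p}^*$. Here the point is that the Artin map restricted to a single local factor $\hat\cO_{\bK,\p}^* \hookrightarrow \A_{\bK}^* \xrightarrow{\vartheta_{\bK}} G_{\bK}^{\ab}$ is injective --- this follows from the fact that the kernel of the global Artin map is $\overline{\bK^*(\bK_\infty^*)^0}$ and a local unit group at a finite place intersects this trivially. Composing the resulting identification $\hat\cO_{\bK,\p}^* \isomto I_{\bK,\p}$ with $\Phi$ and with the inverse identification $I_{\bL,\varphi(\p)} \isomto \hat\cO_{\bL,\varphi(\p)}^*$ gives the desired group isomorphism $\varphi : \hat\cO_{\bK,\p}^* \isomto \hat\cO_{\bL,\varphi(\p)}^*$. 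Finally, one assembles the global statement: $\hat\cO_{\bK}^* = \prod_{\p} \hat\cO_{\bK,\p}^*$, and since $\varphi$ permutes primes compatibly (via the semigroup isomorphism $\varphi : J_{\bK}^+ \isomto J_{\bL}^+$ of Proposition~\ref{isoJs}) and is norm-preserving, taking the (restricted/profinite) product of the local isomorphisms yields a topological group isomorphism $\varphi : \hat\cO_{\bK}^* \isomto \hat\cO_{\bL}^*$; one should check continuity, which is immediate since each factor map is a restriction of the continuous map $\Phi$ and the product topology is used.

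The main obstacle I anticipate is making the passage from the \emph{abstract} isomorphism of inertia subgroups to a \emph{canonical} identification with local unit ideles genuinely compatible with $\varphi$ as a map on ideles --- in particular, verifying that the local inertia subgroups $I_{\bK,\p}$ are intrinsically recognizable inside $G_{\bK}^{\ab}$ (so that $\Phi$ is forced to respect them) and that one is not merely getting an isomorphism depending on auxiliary choices. This is exactly where Proposition~\ref{respectramification} does the crucial work: it says $\Phi$ respects the whole ramification filtration, so the inertia group at $\p$, being $\bigcap_{\fn \colon (\fn,\p)=1} \ker(G_{\bK}^{\ab} \to G_{\bK,\fn}^{\ab})^{\perp}$-type data that is detected by which extensions $\p$ ramifies in, is sent to the inertia group at $\varphi(\p)$. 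A secondary technical point is that local class field theory identifies $I_{\bK,\p}$ with $\hat\cO_{\bK,\p}^*$ only after we know $\p$ is unramified-compatible, but since we are looking at the \emph{full} local unit group mapping into the \emph{full} abelianized Galois group (not a finite quotient), the identification $\hat\cO_{\bK,\p}^* \isomto I_{\bK,\p}$ is an isomorphism of profinite groups and no ramification subtlety intervenes; I would state this cleanly as a lemma citing local class field theory and the description of the kernel of the global Artin map.
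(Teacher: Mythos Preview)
Your proposal is correct and follows essentially the same route as the paper: use Proposition~\ref{respectramification} (that $\Phi$ respects ramification, hence carries the inertia subgroup $I_{\bK,\p}^{\ab}\subset G_{\bK}^{\ab}$ to $I_{\bL,\varphi(\p)}^{\ab}$), invoke local class field theory for the canonical identification $I_{\bK,\p}^{\ab}\cong\hat\cO_{\bK,\p}^*$, and then pass to the product over all primes. The paper's proof is more terse---it simply cites local class field theory for the identification and the product decomposition of $\hat\cO_{\bK}^*$---whereas you spell out the injectivity of the Artin map on a single local unit factor and the continuity of the product map; these are useful elaborations but not a different argument.
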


\begin{proof}
Consider the maximal abelian extension of $\bK$ in which $\p$ is unramified. It is the fixed field of the inertia group $I_{\bK,\p}^{\ab}$ of $\p$ in $\bK^{\ab}$. From the fact that $\Phi$ respects ramification, it follows that $$ \Phi(I_{\bK,\p}^{\ab}) = I_{\bL, \varphi(\p)}^{\ab}. $$
But now by local class field theory, we have a canonical isomorphism $$I^{\ab}_{\bK, \p} \isomto \hat\cO_{\bK, \p}^*.$$  Hence $\varphi$ induces isomorphisms 
$$ \varphi \, : \, \hat\cO_{\bK, \p}^* \isomto \hat\cO_{\bL, \varphi(\p)}^* $$
between the topological groups of local units  (compare with the discussion in Section 1.2 of \cite{Mo}). Since the integral invertible ideles are the direct product as topological groups of the local units, we get the claim. 
\end{proof}

\begin{prop} \label{finiteideles}
Let $\bK$ and $\bL$ denote two number fields admitting an isomorphism $\varphi$ of their QSM-systems $(A_{\bK},\sigma_{\bK})$ and $(A_{\bL},\sigma_{\bL})$. Then $\varphi$ induces a semigroup isomorphism:
$$ \varphi \, : \, (\A_{\bK,f}^* \cap \hat \cO_{\bK},\times) \isomto (\A_{\bL,f}^* \cap \hat \cO_{\bL},\times). $$
\end{prop}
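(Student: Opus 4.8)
The statement promotes the isomorphism of unit ideles $\hat\cO_{\bK}^*\isomto\hat\cO_{\bL}^*$ from Proposition \ref{unitideles} to an isomorphism of the multiplicative monoids $\A_{\bK,f}^*\cap\hat\cO_{\bK}$ of \emph{integral} finite ideles (not just the units). The key structural observation is that such an integral finite idele $x=(x_\p)_\p$ decomposes canonically as a product $x = s(\fn)\cdot u$, where $\fn = \prod_\p \p^{v_\p(x_\p)}\in J_{\bK}^+$ is the ideal it generates, $s$ is the chosen section of $\A_{\bK,f}^*\twoheadrightarrow J_{\bK}$, and $u\in\hat\cO_{\bK}^*$ is a unit idele; more intrinsically, one has a short exact sequence of monoids
$$ 1 \longrightarrow \hat\cO_{\bK}^* \longrightarrow \A_{\bK,f}^*\cap\hat\cO_{\bK} \longrightarrow J_{\bK}^+ \longrightarrow 1, $$
which splits (non-canonically) via $s$. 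So the plan is: build $\varphi$ on integral finite ideles by transporting this decomposition, using the unit-idele isomorphism from Proposition \ref{unitideles} on the $\hat\cO_{\bK}^*$-part and the norm-preserving semigroup isomorphism $\varphi\colon J_{\bK}^+\isomto J_{\bL}^+$ from Proposition \ref{isoJs} on the ideal-part, and then check the result is independent of the splitting and respects multiplication.

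\textbf{Steps.} First, I would record the canonical monoid extension above and note that the unit-idele isomorphism from Proposition \ref{unitideles} respects the local decomposition $\hat\cO_{\bK}^* = \prod_\p \hat\cO_{\bK,\p}^*$ with $\varphi(\hat\cO_{\bK,\p}^*) = \hat\cO_{\bL,\varphi(\p)}^*$. Second, I would observe that the prime-by-prime structure means that it suffices to construct, for each prime $\p$ of $\bK$, a multiplicative isomorphism $\hat\cO_{\bK,\p}\setminus\{0\} = \A_{\bK,\p}^*\cap\hat\cO_{\bK,\p}\isomto\hat\cO_{\bL,\varphi(\p)}\setminus\{0\}$ compatible with the valuation (sending $v_\p = n$ to $v_{\varphi(\p)} = n$, which is forced by the norm-preservation $N_{\bL}(\varphi(\p))=N_{\bK}(\p)$ ensuring a fixed uniformizer power maps to a fixed uniformizer power), and then take the restricted product over all $\p$. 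Locally, $\hat\cO_{\bK,\p}\setminus\{0\}\cong \Z_{\geq 0}\times\hat\cO_{\bK,\p}^*$ once a uniformizer $\pi_\p$ is chosen, and the already-constructed $\varphi\colon\hat\cO_{\bK,\p}^*\isomto\hat\cO_{\bL,\varphi(\p)}^*$ together with $\pi_\p\mapsto\pi_{\varphi(\p)}$ gives the local map; I would then check that the global assembly is independent of all choices of uniformizers by verifying that changing $\pi_\p$ by a unit changes the map by an inner adjustment absorbed into the unit part. Third, I would verify multiplicativity: since both the ideal part ($\varphi$ a semigroup homomorphism on $J^+$) and the unit part ($\varphi$ a group homomorphism on $\hat\cO^*$) are multiplicative, and the extension is central (the monoid is commutative), the assembled map is multiplicative; injectivity and surjectivity follow from the corresponding properties of $\varphi$ on $J^+$ and on $\hat\cO^*$ together with the splitting of the extension on both sides. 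Finally I would note continuity is automatic since each local factor is continuous and the product topology matches.

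\textbf{Main obstacle.} The delicate point is the \emph{compatibility of the unit-part identification across the choice of section/uniformizers}: a priori the map $\varphi$ on $J_{\bK}^+$ from Proposition \ref{isoJs} and the map $\varphi$ on $\hat\cO_{\bK}^*$ from Proposition \ref{unitideles} are defined through different routes (piecewise conjugacy of the dynamical system, resp.\ local class field theory applied to inertia subgroups), so one must confirm that they are compatible in the sense needed — concretely, that the image $\varphi(s(\fn))$ differs from $s(\varphi(\fn))$ only by a unit idele in a way consistent with the unit isomorphism. The cleanest way to finesse this is to avoid choosing sections entirely: work purely with the intrinsic monoid extension, note that an isomorphism of extensions is determined by isomorphisms of the kernel ($\hat\cO^*$, given) and cokernel ($J^+$, given) provided a certain cocycle obstruction vanishes, and show the obstruction vanishes because the extension splits on both sides (every integral finite idele literally \emph{is} a product of a uniformizer-monomial and a unit, with the monomial part an honest sub-monoid once uniformizers are fixed). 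I expect this bookkeeping — rather than any deep new idea — to be where the real work lies, and I would phrase it prime-by-prime to keep it elementary.
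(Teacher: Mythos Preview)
Your approach is essentially the same as the paper's: write down the split exact sequence $1\to\hat\cO_{\bK}^*\to\A_{\bK,f}^*\cap\hat\cO_{\bK}\to J_{\bK}^+\to 1$, observe the splitting gives a semigroup isomorphism $\A_{\bK,f}^*\cap\hat\cO_{\bK}\cong J_{\bK}^+\times\hat\cO_{\bK}^*$, and then take the product of the two already-constructed isomorphisms on the factors (Propositions \ref{isoJs} and \ref{unitideles}).

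Your ``main obstacle'' is a non-issue, and the paper treats it as such in a single line. You are not being asked to show that some \emph{canonical} or \emph{pre-existing} map on $\A_{\bK,f}^*\cap\hat\cO_{\bK}$ is an isomorphism; you only need to \emph{produce} some semigroup isomorphism. Since the extension splits on both sides (any choice of uniformizers will do), both monoids are abstractly direct products, and the product of two isomorphisms is an isomorphism --- full stop. There is no cocycle to check, no compatibility between the two routes to verify, and no need to show independence of the choice of section: different choices simply yield different (equally valid) isomorphisms. Your prime-by-prime bookkeeping and continuity remarks are correct but unnecessary for what is being claimed.
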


\begin{proof}
We have an exact sequence \begin{equation} \label{sequence} 0 \rightarrow \hat{\cO}^*_{\bK} \rightarrow \A_{\bK,f}^* \cap \hat{\cO}_{\bK}  \rightarrow J^+_{\bK} \rightarrow 0,\end{equation} which is (non-canonically) split by choosing a uniformizer $\pi_{\p}$ at every place $\p$ of the field: 
$$ \A_{\bK,f}^* \cap \hat{\cO}_{\bK} \isomto J_{\bK}^+ \times \hat{\cO}^*_{\bK} \, : \, (x_{\p})_{\p} \mapsto \left( \prod \p^{\mathrm{ord}_{\p}(x_{\p})}, (x_{\p} \cdot \pi_{\p}^{-{\mathrm{ord}_{\p}(x_{\p})}})_{\p}    \right). $$ 
Hence as a semigroup, $\A_{\bK,f}^* \cap \hat \cO_{\bK}= J^+_{\bK}  \times \hat{\cO}^*_{\bK}. $ The result follows from Propositions \ref{gab} and \ref{unitideles}.
\end{proof}

\begin{remark}
Using fractional ideals, one may prove in a similar way that $\varphi$ induces a multiplicative group isomorphism of the finite ideles of $\bK$ and $\bL$. 
\end{remark}

\section{From QSM to field isomorphism: multiplicative structure}

In this section, we prove that QSM-isomorphism induces an isomorphism of multiplicative semigroups of rings of (totally positive) integers. The idea is to use certain symmetries of the system to encode this structure. 

We first establish some facts on the symmetries of QSM-systems of number fields. 
The statement is analogous to Proposition 2.14 of \cite{CMR} and Proposition 3.124 of \cite{CM}, where it was formulated for the case of imaginary quadratic fields, and to Theorem 2.14 of \cite{ConsM}, formulated in the function field case. 

\begin{prop} \label{bloeb}
Let $\bK$ denote any number field. An element $s$ of the semigroup $\hat{\cO}_{\bK} \cap \A^*_{\bK,f}$ induces an endomorphism $\varepsilon_{\bK,s}=\varepsilon_s$ of $(A_{\bK},\sigma_{\bK})$ given by $$ \varepsilon_s(f)(\gamma,\rho)= f(\gamma, s^{-1}\rho)e_{\fr} \quad \mbox{ and } \quad \varepsilon_s(\mu_{\fn})= e_{\fr} \mu_{\fn}, $$ where $e_{\fr}$ projects onto the space where 
$s^{-1}\rho \in \hat\cO_{\bK}$, for $\fr = s
\hat\cO_{\bK} \cap \bK.$ These endomorphisms preserve the dagger subalgebra $A_{\bK}^{\dagger}$ by construction. 

Furthermore, 
\begin{enumerate}

\item[\textup{(}i\textup{)}] The subgroup of invertible integral ideles $\hat{\cO}^*_{\bK}$ is exactly the one that acts by automorphisms of the system.
\item[\textup{(}ii\textup{)}] The closure of totally positive units $\bar{\cO_{\bK,+}^*}$ are precisely the elements that give rise to the trivial endomorphism.
\item[\textup{(}iii\textup{)}]  
The sub-semiring ${\cO}^\times_{\bK,+} = \cO_{\bK,+}-\{0\}$ of non-zero totally positive elements of the ring of integers is exactly the one that acts by dagger inner endomorphisms. 
\end{enumerate}
This is summarized by following commutative diagram:
$$ \xymatrix{  \mathrm{Inn}^\dagger(A_{\bK},\sigma_{\bK})  \ar@{^{(}->}[r]   & \mathrm{End}(A_{\bK},\sigma_{\bK}) & \mathrm{Aut}(A_{\bK},\sigma_{\bK}) \ar@{_{(}->}[l]   \\ \cO_{\bK,+}^\times \ar@{^{(}->}[r] \ar@{->}[u]  &  \hat{\cO}_{\bK} \cap \A^*_{\bK,f} \ar@{->}[u]_{\varepsilon_{\bK}} &   \hat{\cO}_{\bK}^*  \ar@{_{(}->}[l] \ar@{->}[u] \\ \cO_{\bK,+}^* \ar@{^{(}->}[r] \ar@{^{(}->}[u] &  \bar{\cO_{\bK,+}^*} \ar@{=}[r] \ar@{^{(}->}[u] &  \bar{\cO_{\bK,+}^*} \ar@{^{(}->}[u]  
}
$$
\end{prop}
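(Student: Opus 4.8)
The plan is to verify the displayed commutative diagram by establishing each of the three numbered claims (i), (ii), (iii) together with the well-definedness of the endomorphism $\varepsilon_s$, since once these are in place the commutativity of all squares in the diagram is immediate from the definitions. First I would check that the assignment $s \mapsto \varepsilon_s$ is a well-defined $\ast$-homomorphism of the semigroup $\hat\cO_{\bK}\cap\A^*_{\bK,f}$ into $\End(A_{\bK},\sigma_{\bK})$: one must see that $\varepsilon_s$ respects the crossed-product relations \eqref{relsmunuialg}, that it commutes with the time evolution (this is clear since $\varepsilon_s$ multiplies the $\mu_{\fn}$ only by the projector $e_{\fr}$, which is fixed by $\sigma_t$, and fixes $C(X_{\bK})$ up to the same projector), and that $\varepsilon_s\varepsilon_{s'}=\varepsilon_{ss'}$, using $\fr_{ss'} = ss'\hat\cO_{\bK}\cap\bK$. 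The balancing over $\hat\cO_{\bK}^*$ must be used to see $f(\gamma,s^{-1}\rho)$ is well-defined on $X_{\bK}$ when $s^{-1}\rho$ makes sense.

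Next I would prove (i): an idele $s$ acts by an automorphism exactly when $e_{\fr}=1$, i.e. when $s^{-1}\rho\in\hat\cO_{\bK}$ for all $\rho\in\hat\cO_{\bK}$, equivalently $s$ is a unit at every finite place, i.e. $s\in\hat\cO_{\bK}^*$; conversely a unit idele clearly gives an automorphism (with inverse $\varepsilon_{s^{-1}}$). For (ii), I would identify which $s$ give $\varepsilon_s=\mathrm{id}$: we need $\fr=(1)$, so $s\in\hat\cO_{\bK}^*$, and moreover $f(\gamma,s^{-1}\rho)=f(\gamma,\rho)$ for all $f\in C(X_{\bK})$, i.e. $s$ acts trivially on $X_{\bK}$, which by the balancing relation means $\vartheta_{\bK}(s)=1$ in $G_{\bK}^{\ab}$ (acting on the left) while $s$ acts by multiplication on the right — unwinding this through Artin reciprocity, the kernel of the action is exactly the closure of the image of the totally positive units $\cO_{\bK,+}^*$ (the totally positive condition appears because we have quotiented by the connected component $(\bK_\infty^*)^0$ in passing from ideles to $G_{\bK}^{\ab}$, cf.\ the $\bK$-lattice picture and \eqref{ster}). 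I would quote the analogous computations in Proposition 2.14 of \cite{CMR} and Proposition 3.124 of \cite{CM} here.

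For (iii) I would show that $s\in\cO_{\bK,+}^\times$, embedded diagonally into $\hat\cO_{\bK}\cap\A^*_{\bK,f}$, gives the inner endomorphism implemented by the isometry $\mu_{(s)}$ (or rather $e_{\fr}$ composed appropriately) — more precisely, that $\varepsilon_s(a)=u a u^*$ for a suitable partial isometry $u=u_s\in A_{\bK}^{\dagger}$ which is an eigenvector of $\sigma_t$ with eigenvalue $N((s))$, so that $\varepsilon_s\in\mathrm{Inn}^\dagger$; and conversely that any such $s$ giving a dagger inner endomorphism must be a principal totally positive integral idele, using that dagger inner endomorphisms are implemented by eigenvectors living in the algebraically generated dagger subalgebra, which forces the implementing isometry to be (a unit multiple of) some $\mu_{\fn}$ with $\fn$ principal, and the totally positive representative is singled out again by the $(\bK_\infty^*)^0$-quotient as in (ii). Then the diagram commutes: the bottom-left square is the containment $\cO_{\bK,+}^*\subset\cO_{\bK,+}^\times$ mapping into $\mathrm{Inn}^\dagger$ (trivial endomorphisms are a fortiori inner), the bottom-right square is (ii), and the top row together with the vertical map $\varepsilon_{\bK}$ records (i) and (iii).

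The main obstacle I expect is part (iii) — specifically the \emph{converse} direction, showing that \emph{only} totally positive principal integral elements act by dagger inner endomorphisms. One must rule out, for instance, inner endomorphisms coming from isometries that are not of the form (scalar)$\times\mu_{\fn}$, and one must correctly pin down the totally-positive (as opposed to merely ``positive at the places where it matters'') representative; this is where the interplay between the archimedean data hidden in $G_{\bK}^{\ab}=\A_{\bK}^*/\overline{\bK^*(\bK_\infty^*)^0}$ and the purely finite-adelic semigroup $\hat\cO_{\bK}\cap\A^*_{\bK,f}$ is most delicate, and where I would lean most heavily on the structure of the generalized Fourier expansion $a=\sum_\alpha\mu_\alpha E_\alpha(a)$ together with the eigenvector condition $\sigma_t(u)=\lambda^{it}u$ to constrain $u$.
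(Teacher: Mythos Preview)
Your plan is correct and matches the paper's approach: verify $\varepsilon_s$ is a well-defined endomorphism commuting with $\sigma_t$, dispose of (i) and (ii) directly from the definitions together with the class-field-theoretic identification $\ker(\vartheta_{\bK})\cap\hat\cO_{\bK}^*=\overline{\cO_{\bK,+}^*}$, and concentrate on the converse in (iii).

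You have correctly located the difficulty, but your sketch underestimates what is needed to force the implementing isometry to be a phase times a single $\mu_{\fr}$. The eigenvector condition only gives $u=\sum_{N(\fn)=n}\mu_{\fn} f_{\fn}$, and the isometry condition $u^*u=1$ (after separating terms into the subspaces $V_{\fa,\fb}$ spanned by $\mu_{\fa} f\mu_{\fb}^*$ for coprime $\fa,\fb$) only yields $\sum_{\fn}|f_{\fn}|^2=1$; neither forces a single summand. The paper's extra step is to expand $ufu^*=\sum_{\fn}\rho_{\fn}(|f_{\fn}|^2)\rho_{\fn}(f)e_{\fn}$ (the off-diagonal terms again fall into $V_{\fa,\fb}$ with $(\fa,\fb)\neq(1,1)$ and hence vanish), equate with $\varepsilon_s(f)=f(\gamma,s^{-1}\rho)e_{\fr}$, and then plug in the test functions $f=e_{\fb^k}$: a short induction on $k$ using positivity of the coefficients $a_{\fn}=\rho_{\fn}(|f_{\fn}|^2)$ shows that any $\fn\neq\fr$ with $a_{\fn}\neq 0$ would force a point to lie in the range of $e_{\fa\fb^{k+1}\fc}$ for all $k$, which is impossible. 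This isolates $u=a\mu_{\fr}$ with $|a|^2=1$, and the remaining comparison $f(\gamma,\fr^{-1}\rho)e_{\fr}=f(\vartheta_{\bK}(\fr)\gamma,\fr^{-1}\rho)e_{\fr}$ is what forces $\vartheta_{\bK}(\fr)$ to be trivial in the \emph{narrow} class group, whence $\fr$ is totally positive principal. So your Fourier-expansion-plus-eigenvector strategy is right, but the decisive device is this projector-induction positivity argument, which you should add to your plan.
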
  

\begin{proof}
The maps $\varepsilon_s$ are indeed endomorphisms, since they are compatible by construction with
the time evolution, 
$$ \varepsilon_s \sigma_t = \sigma_t \varepsilon_s , \ \  \forall s\in \hat\cO \cap \A_{\bK,f}^*, \ \ 
\forall t\in \R. $$ We also see immediately that $\varepsilon_{\bK} \colon s \mapsto \varepsilon_s$ is a semigroup homomorphism. 

It is clear from the definition that exactly the elements of $\hat\cO_{\bK}^*$ act by automorphisms. 

An element $s$ acts trivially precisely when $(\gamma,\rho) \sim (\gamma,s^{-1}\rho)$ for all $\gamma,\rho$. This means that there exists an idelic unit $u \in \hat\cO_{\bK}^*$ such that $\vartheta_{\bK}(u)=1$ and $s=u$. Now class field theory says that $$\ker(\vartheta_{\bK}) \cap \hat\cO_{\bK}^*=\bar\cO_{\bK,+}^*,$$ the closure of the totally positive units of the ring of integers $\cO_{\bK}$ (compare Prop.\ 1.1 in \cite{LNT}). 

To finish the proof, we now study when $\varepsilon_s$ is an inner endomorphism that preserves the dagger subalgebra,
that is, an inner endomorphism implemented by an isometry $u \in A^{\dagger}_{\bK}$, which
is an eigenvector of the time evolution.
We claim the following:
\begin{quote}
\emph{If $\varepsilon_s(f)=u f u^*$ is a non-trivial dagger inner endomorphism for some eigenvector $u \in A^{\dagger}_{\bK}$ of the time evolution with $u^*u=1$, then, $u= a \mu_{\fr}$ for some phase factor $a \in C(X_{\bK})$ with $|a|^2=1$, and  
for some totally positive principal ideal $\fr \in J_{\bK}^+$. We then have 
$s \in \cO_{\bK,+}^\times$ with $\fr= s  \hat\cO_{\bK} \cap{\bK}$.}
\end{quote}

Indeed, suppose $u\in A^\dagger_{\bK}$
with $\sigma_t(u) =\lambda^{it} u$, for some 
$\lambda=n/m$ with $m,n$ coprime integers, and with $u^* u=1$.  
As an element in $A^{\dagger}_{\bK}$ the isometry $u$
can be written as  a sum of monomials
$$ u = \sum_{\fn} \mu_{\fn} f_{\fn} $$
with no $\mu_{\fn}^*$. Thus, it will have $m=1$ and $\lambda=n$ for
some $n$, so that
\begin{equation}\label{smuf1}
u=\sum_{N_{\bK}(\fn)=n} \mu_{\fn} f_{\fn}, 
\end{equation}
with $f_{\fn}\in C(X_{\bK})$. 

First observe the following: we can express all elements in the algebraic crossed product of $C(X_{\bK})$
by $J_{\bK}^+$ as sums of monomials of the form
$\mu_{\fn} f \mu^*_{\fm}$, with $\fn$ and $\fm$ in $J_{\bK}^+$ and $f\in C(X_{\bK})$.
For any pair of elements $\fn$ and $\fm$ in $J_{\bK}^+$ that have 
no factor in common in their decomposition into primes of ${\bK}$, 
let $V_{\fn,\fm}$ denote the linear span of the elements $\mu_{\fn} f \mu^*_{\fm}$
with $f\in C(X_{\bK})$.  Then $V_{\fn,\fm}\cap
V_{\fn',\fm'}=\{0\}$, whenever either $\fn \neq \fn'$ or $\fm\neq \fm'$. 

The condition $u^* u=1$ then gives
$$ \sum \bar f_{\fn} \mu_{\fn}^* \mu_{\fn'} f_{\fn'} = 1, $$
which we write equivalently as
\begin{equation}\label{twosums}
 \underbrace{\sum_{\fn} | f_{\fn}|^2}_{S_1} +\underbrace{\sum_{\substack{\fn \neq \fn'}}
\bar f_{\fn} \mu_{\fn}^* \mu_{\fn'} f_{\fn'}}_{S_2}  =1,
\end{equation}
where the first sum $S_1$ corresponds to the case where $\fn=\fn'$.

We now check that  the second sum $S_2$ vanishes. To see this, let $\fu$ be the greatest common factor of $\fn$ and $\fn'$ in their prime decompositions, so that $\fn=\fu\fa$ and $\fn'=\fu\fb$ for $\fa$ and $\fb$ coprime. 
Then we get
$$ \bar f_{\fn} \mu_{\fn}^* \mu_{\fn'} f_{\fn'}  =
\bar f_{\fn} \mu_{\fa}^* \mu_{\fb} f_{\fn'}, $$
since $\mu_{\fu}^* \mu_{\fu}=1$. Since $\fa$ and $\fb$ have no common factor,
$\mu_{\fa}^* \mu_{\fb}= \mu_{\fb}\mu_{\fa}^*$ and we have that the above expression further equals
$$ = \mu_{\fb} \sigma_{\fb}(\bar f_{\fn}) \sigma_{\fa}( f_{\fn'}) \mu_{\fa}^*. $$
Next notice that, since $\fa$ and $\fb$ have no common factor, this is an element of $V_{\fa,\fb}$.
Thus, in relation \eqref{twosums} the subsum $S_1$ and the constant $1$ on the right hand side are both in the subspace $V_{1,1}$, while all the terms in the second sum $S_2$ are in other
subspaces $V_{\fa,\fb}$ for $\fa \neq \fb$.  

We conclude that the second sum $S_2$ in \eqref{twosums} vanishes and thus, the condition that $u^*u=1$ is equivalent to the functions $f_{\fn}$ satisfying  
\begin{equation}\label{relm11}
 \sum_{\fn}  |f_{\fn}|^2 =1.
\end{equation}

Consider then the inner endomorphism $f\mapsto u f u^*$, with $u$ as above.
Substituting the above representation of $u$,  we find 
\begin{equation}\label{sumsufu}
 u f u^* = \sum_{\fn',\fn} \mu_{\fn} f_{\fn} f \bar f_{\fn'} \mu_{\fn'}^* =  \sum_{\fn', \fn} \rho_{\fn}( f_{\fn} f \bar f_{\fn'}) \mu_{\fn} \mu_{\fn'}^*
\end{equation}

As above, one verifies that the part of the above sum with $\fn' \neq \fn$ 
is in a space $V_{\fa,\fb}$ for $(\fa,\fb) \neq (1,1)$, while $u f u^* = \varepsilon_s(f)$ is in $V_{1,1}$, as is the part of the sum where $\fn'=\fn$, which equals 
$$ \sum_{\fn} \rho_{\fn}(|f_{\fn}|^2f)e_{\fn}. $$  

We conclude that 
\begin{equation}\label{esen}
  \varepsilon_s(f) =  f(\gamma,s^{-1}\rho)e_{\fr} =  \sum_{\fn}   a_{\fn} \rho_{\fn} ( f ) e_{\fn} . 
\end{equation}  
for $a_{\fn} = \rho_{\fn}( |f_{\fn}|^2)$ positive, supported in the range of $e_{\fn}$.  

Fix $\fn$ an ideal of norm $n$, different from $\fr$. We shall prove that $a_{\fn}=0$. For this, write $\fr=\fa \fb$ and $\fn = \fa \fc$ with $\fb$ and $\fc$ coprime. Assume that $a_{\fn}(x)\neq 0$. From the above, we can assume that $x$ belong to the range of $e_{\fn}=e_{\fa \fb^0 \fc}$. Assume by induction that $x$ belong to the range of $e_{\fa \fb^k \fc}$. We now show that $x$ also belongs to the range of $e_{\fa \fb^{k+1} \fc}$. For this, apply equation \eqref{esen} to the function $f=e_{\fb^k}$. We find 
$$ e_{\fr \fb^k}(x) = a_{\fn}(x) e_{\fn \fb^k}(x) + \mbox{ positive terms}. $$
We rewrite this as 
$$ e_{\fa \fb^{k+1}}(x) =  a_{\fn}(x) e_{\fa \fb^k \fc} (x) + \mbox{ positive terms}. $$
Since by assumption $a_{\fn}(x)>0$ and  $e_{\fa \fb^k \fc}(x)=1$, we find from this identity that $e_{\fa \fb^{k+1}}(x) \neq 0$. Hence $x$ belongs to the range of $\fa \fb^k \fc$ and $ \fa \fb^{k+1}$, hence of $\fa \fb^{k+1} \fc$ for all $k$, as claimed. If $\fb \neq 1$, then this never happens. We conclude that $\fb=1$, so $\fr = \fa \mid \fn$, and since $\fr$ and $\fn$ have the same norm, we find $a_{\fn}=0$ unless $\fn=\fr$, so that in the sum on the right hand side only one
term is non-zero, and relation \eqref{esen} becomes $$  \varepsilon_s(f) (\gamma,\rho) = \rho_{\fr}(|f_{\fr}|^2)\rho_{\fr}(f)(\gamma,\rho) e_{\fr}.$$  Working out both sides, we find 
\begin{equation}\label{epsilonsmun}
f(\gamma, \fr^{-1} \rho) e_{\fr} = \rho_{\fr}(|f_{\fr}|^2)
f(\theta_{\bK}(\fr)\gamma, \fr^{-1} \rho) e_{\fr}.
\end{equation}
First of all, setting $f=1$, we get that $\rho_{\fr}(|f_{\fr}|^2)=1$. If we apply the partial inverse $\sigma_{\fr}$ to this, we find $|f_{\fr}|^2 = \sigma_{\fr} \rho_{\fr} (|f_{\fr}|^2)=1$. We then conclude from \eqref{relm11} that all other $f_{\fn}=0$ ($\fn \neq \fr)$, so that we indeed get $$u= a \mu_{\fr}$$ for the phase factor $a=f_{\fr}$.  Now equality \eqref{epsilonsmun} implies that $\theta_{\bK}(\fr)$ equals $\theta_{\bK}(u)$
for some unit id\`ele $u \in \hat\cO_{\bK}^*$. This means precisely that $\fr$ is trivial in
$G_{\bK}^{\ab}/\vartheta_{\bK}(\hat\cO_{\bK}^*)$, which is the narrow ideal class group of ${\bK}$. Hence
$\fr$ is a totally positive principal ideal corresponding to a generator $s \in \cO^\times_{{\bK},+}$.
\end{proof}

\begin{remark}
As we have already observed, 
the group $G_{\bK}^{\ab}$ (which contains an image of $\hat\cO\cap
\A^*_{\bK,f}$) also acts on the QSM system by symmetries, cf. \cite{LLN},
Remark 2.2(i). 
This gives two slightly different actions of
$\hat\cO\cap \A^*_{\bK,f}$ on the QSM system, which induce the
same action on the low temperature KMS states. As was remarked to us by
Bora Yalkinoglu, when viewing the algebra $A_{\bK}$ as an endomotive
in the sense of \cite{CCM} and \cite{Mar-endo}, the two actions correspond,
respectively, to the one coming from the $\Lambda$-ring structure in
the sense of Borger \cite{Bor} and to the Galois action coming from
the endomotive construction as in \cite{CCM}.
\end{remark}

\begin{remark}[$\bK$-lattices] 
In terms of $\bK$-lattices $(\Lambda,\phi)$, the divisibility condition above corresponds to
the condition that the homomorphism $\phi$ factors through
$$\phi: \bK/\cO_{\bK} \to \bK\Lambda/\fn\Lambda \to \bK\Lambda/\Lambda.$$ The action of the endomorphisms
is then given by
$$ \varepsilon_s(f)((\Lambda,\phi),(\Lambda',\phi'))= f((\Lambda,s^{-1}\phi),(\Lambda',s^{-1}\phi')) $$
when both $(\Lambda,\phi)$ and $(\Lambda',\phi')$ are divisible by $s$ and zero otherwise.

When $s\in \cO_{\bK}^\times$, we can consider
the function 
$$ \mu_s ((\Lambda,\phi),(\Lambda',\phi'))= \left\{ \begin{array}{ll}
1 & \Lambda = s^{-1}\Lambda' \ \ \text{ and } \ \  \phi'=\phi; \\
0 & \text{otherwise.}
\end{array}\right. $$
These are eigenvectors of the time evolution, with 
$ \sigma_t(\mu_s) = N_{\bK} (\fn)^{it} \mu_s, $
and $ \varepsilon_s(f) = \mu_s \star f \star \mu_s^*, $
for the convolution product of the algebra $A_{\bK}$. For a discussion in this language of why, in the case of totally imaginary fields, only \emph{principal} (in this case, the same as totally positive principal) ideals give inner endomorphisms, see \cite{CM}, p.\ 562.
\end{remark}

\begin{prop} \label{integers}

Let $\bK$ and $\bL$ denote two number fields admitting a dagger isomorphism $\varphi$ of their QSM-systems $(A_{\bK},\sigma_{\bK})$ and $(A_{\bL},\sigma_{\bL})$. Then $\varphi$ induces a semigroup isomorphism between the multiplicative semigroups of totally positive non-zero elements of the rings of integers  of $\bK$ and $\bL$: 
$$ \varphi \, : \, (\cO^\times_{\bK,+},\times) \isomto (\cO^\times_{\bL,+},\times). $$
\end{prop}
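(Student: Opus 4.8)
The plan is to leverage Proposition \ref{bloeb}, which canonically identifies the semiring $\cO_{\bK,+}^\times$ of non-zero totally positive integers with the set $\mathrm{Inn}^\dagger(A_{\bK},\sigma_{\bK})$ of dagger inner endomorphisms of the QSM-system: an element $s\in\cO_{\bK,+}^\times$ acts by $f\mapsto \mu_{\fr}af\bar a\mu_{\fr}^*$ where $\fr=s\hat\cO_{\bK}\cap\bK$ is the principal ideal it generates, and $a$ is a phase factor in $C(X_{\bK})$. First I would observe that a dagger isomorphism $\varphi:(A_{\bK},\sigma_{\bK})\isomto(A_{\bL},\sigma_{\bL})$ carries dagger inner endomorphisms to dagger inner endomorphisms by conjugation: if $\theta(f)=ufu^*$ with $u\in A_{\bK}^\dagger$ an eigenvector of $\sigma_{\bK}$ with $u^*u=1$, then $\varphi\theta\varphi^{-1}(g)=\varphi(u)g\varphi(u)^*$, and $\varphi(u)\in A_{\bL}^\dagger$ is again an eigenvector of $\sigma_{\bL}$ (with the same eigenvalue, since $\varphi$ intertwines time evolutions) satisfying $\varphi(u)^*\varphi(u)=1$. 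Hence $\varphi$ induces a bijection $\mathrm{Inn}^\dagger(A_{\bK},\sigma_{\bK})\isomto\mathrm{Inn}^\dagger(A_{\bL},\sigma_{\bL})$, which is a semigroup isomorphism because composition of endomorphisms is preserved under conjugation. Composing with the two semigroup isomorphisms $\cO_{\bK,+}^\times\isomto\mathrm{Inn}^\dagger(A_{\bK},\sigma_{\bK})$ and $\mathrm{Inn}^\dagger(A_{\bL},\sigma_{\bL})\isomto\cO_{\bL,+}^\times$ from Proposition \ref{bloeb}, we obtain a semigroup isomorphism $\varphi:(\cO_{\bK,+}^\times,\times)\isomto(\cO_{\bL,+}^\times,\times)$.

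The one subtlety to address is that $s\mapsto\varepsilon_s$ is not quite injective on $\hat\cO_{\bK}\cap\A^*_{\bK,f}$ — by part (ii) of Proposition \ref{bloeb}, its kernel (on the subgroup of units) is the closure $\bar{\cO_{\bK,+}^*}$ of the totally positive units — so I should be careful to state that the composite map is genuinely defined on $\cO_{\bK,+}^\times$ and not just up to this ambiguity. In fact the diagram at the end of Proposition \ref{bloeb} shows that the restriction of $\varepsilon_{\bK}$ to $\cO_{\bK,+}^\times$ lands in $\mathrm{Inn}^\dagger$, and two elements $s,s'\in\cO_{\bK,+}^\times$ induce the same inner endomorphism iff $s/s'\in\bar{\cO_{\bK,+}^*}\cap\bK^\times=\cO_{\bK,+}^*$, i.e.\ iff they differ by a totally positive unit; but distinct elements of $\cO_{\bK,+}^\times$ that differ by a totally positive unit are genuinely distinct ring elements, so strictly speaking the map $\cO_{\bK,+}^\times\to\mathrm{Inn}^\dagger$ is not injective. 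The fix is to recover $s$ from its inner endomorphism together with the extra data that $\varphi$ also preserves: namely $\varepsilon_s$ determines the principal ideal $\fr=s\hat\cO_{\bK}\cap\bK$ (as the unique $\fr$ with $\varepsilon_s(1)=e_{\bK,\fr}$), and $\varphi$ maps $e_{\bK,\fr}$ to $e_{\bL,\varphi(\fr)}$ (cf.\ the Lemma preceding Proposition \ref{gab}), hence $\varphi(\fr)=\varphi(\fr)$ is principal and totally positive, generated by a well-defined element of $\cO_{\bL,+}^\times/\cO_{\bL,+}^*$. To pin down a canonical generator one uses the explicit form $u=a\mu_{\fr}$: the endomorphism $\varepsilon_s$ is $f\mapsto f(\gamma,s^{-1}\rho)e_{\fr}$, and knowing it as a map on $C(X_{\bK})$ determines $s$ as an element of $\hat\cO_{\bK}\cap\A^*_{\bK,f}$ up to $\bar{\cO_{\bK,+}^*}$, and then the intersection with $\bK^\times$ gives $s$ itself up to $\cO_{\bK,+}^*$.

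I expect the main obstacle to be exactly this bookkeeping: making precise that the correspondence in Proposition \ref{bloeb}, read as a map from $\cO_{\bK,+}^\times$ rather than from $\hat\cO_{\bK}\cap\A^*_{\bK,f}$, is a bijection onto $\mathrm{Inn}^\dagger$ modulo $\cO_{\bK,+}^*$, and therefore that the induced map on $\cO^\times_{\bK,+}$ is a well-defined semigroup isomorphism. The cleanest route is probably to note that $\varphi$ is compatible with the homeomorphism $\Phi:X_{\bK}\isomto X_{\bL}$ and the ideal isomorphism $\varphi:J_{\bK}^+\isomto J_{\bL}^+$ from Proposition \ref{isoJs}, so that $\varepsilon_s$, which is the map $f\mapsto f\circ(\text{multiplication by }s^{-1}\text{ on }X_{\bK})\cdot e_{\fr}$, is carried by $\varphi$ to $\varepsilon_{s'}$ where $s'$ is the element of $\hat\cO_{\bL}\cap\A^*_{\bL,f}$ implementing the conjugated multiplication map on $X_{\bL}$; one then checks $s'\in\cO_{\bL,+}^\times$ and that $s\mapsto s'$ respects multiplication, using that multiplication maps on $X$ compose. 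Since Proposition \ref{unitideles} already gives $\varphi:\hat\cO_{\bK}^*\isomto\hat\cO_{\bL}^*$ and Proposition \ref{finiteideles} gives $\varphi:\hat\cO_{\bK}\cap\A^*_{\bK,f}\isomto\hat\cO_{\bL}\cap\A^*_{\bL,f}$, one can in fact argue: $\varphi$ restricted to the semigroup $\hat\cO_{\bK}\cap\A^*_{\bK,f}$ is known, and $\cO_{\bK,+}^\times$ sits inside it as the diagonally-embedded totally positive integers; it remains to verify $\varphi(\cO_{\bK,+}^\times)=\cO_{\bL,+}^\times$, which is precisely the content of part (iii) of Proposition \ref{bloeb} (dagger inner endomorphisms correspond, and $\varphi$ preserves being a dagger inner endomorphism). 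This is the argument I would write out.
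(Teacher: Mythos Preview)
Your approach is essentially the paper's: use Proposition~\ref{finiteideles} to get the isomorphism $\varphi:\hat\cO_{\bK}\cap\A^*_{\bK,f}\isomto\hat\cO_{\bL}\cap\A^*_{\bL,f}$, characterise $\cO_{\bK,+}^\times$ inside this semigroup as $\varepsilon_{\bK}^{-1}(\mathrm{Inn}^\dagger(A_{\bK},\sigma_{\bK}))$ via Proposition~\ref{bloeb}(iii), and then use that conjugation by $\varphi$ preserves dagger inner endomorphisms. You also correctly flag the non-injectivity of $\varepsilon_{\bK}$ as the bookkeeping issue.

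The one step you underweight is the intertwining $\varphi(\varepsilon_s)=\varepsilon_{\varphi(s)}$, which is exactly what makes the two ``$\varphi$'' maps (on ideles and on endomorphisms) fit together. You sketch this by saying $\varepsilon_s$ is pushed forward via the homeomorphism $\Phi$ of Proposition~\ref{isoJs}, but the homeomorphism alone is not enough: for $s\in\hat\cO_{\bK}^*$ one must rewrite $(\gamma,s^{-1}\rho)=(\vartheta_{\bK}(s)\gamma,\rho)$ and then use that $\Phi$ is a \emph{group homomorphism} on $G_{\bK}^{\ab}$ (Proposition~\ref{gab}, after the renormalisation $\tilde\Phi$) to compute $\Phi((\vartheta_{\bK}(s),1)\cdot y)=\Phi(\vartheta_{\bK}(s))\cdot\Phi(y)$. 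The paper isolates this as a separate Lemma and checks it on the two factors $J_{\bK}^+$ and $\hat\cO_{\bK}^*$ of the semigroup; the $J_{\bK}^+$ part is immediate, but the $\hat\cO_{\bK}^*$ part genuinely needs Proposition~\ref{gab}, not just Proposition~\ref{isoJs}. Once you make that lemma explicit your argument is complete and matches the paper's.
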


\begin{proof} Proposition \ref{finiteideles} says that $\varphi$ induces an isomorphism
 $$ \varphi \, : \, \A_{\bK,f}^* \cap \hat{\cO}_{\bK} \isomto \A_{\bL,f}^* \cap \hat{\cO}_{\bL}. $$ 
From Proposition \ref{bloeb}, we have a map 
$$\varepsilon_{\bK} \, : \,  \A_{\bK,f}^* \cap \hat{\cO}_{\bK} \rightarrow \mathrm{End}(A_{\bK},\sigma_{\bK}) \colon s \mapsto \varepsilon_s $$ with kernel $\bar{\cO_{\bK,+}^*}$, and $\varphi$ induces a map 
$$ \mathrm{End}(A_{\bK},\sigma_{\bK}) \isomto \mathrm{End}(A_{\bL},\sigma_{\bL}). $$

Now $\varphi$, as an isomorphism of QSM-systems, also preserves the \emph{inner} endomorphisms: 
$$ \varphi \, : \, \mathrm{Inn}(A_{\bK},\sigma_{\bK}) \isomto \mathrm{Inn}(A_{\bL},\sigma_{\bL}). $$
Moreover, because the $C^*$-algebra isomorphism $\varphi$ also induces an isomorphism
of the dagger subalgebras $\varphi: A^{\dagger}_{\bK} \isomto A^{\dagger}_{\bL}$, it also preserves
the {\em dagger} inner endomorphisms,
$$ \varphi \, : \, \mathrm{Inn}^{\dagger}(A_{\bK},\sigma_{\bK}) \isomto \mathrm{Inn}^{\dagger}(A_{\bL},\sigma_{\bL}), $$
but we know that  $$\varepsilon_{\bK}^{-1}\left(\mathrm{Inn}^{\dagger}(A_{\bK},\sigma_{\bK})\right) = \cO_{\bK,+}^\times, $$ and similarly for $\bL$. Hence to prove that $\varphi$ gives an isomorphism 
\begin{equation} \label{PI} \varphi \, : \, \cO_{\bK,+}^\times \isomto \cO_{\bL,+}^\times, \end{equation} it suffices to prove that $\varphi$ maps $\varepsilon_{\bK}^{-1}\left(\mathrm{Inn}^\dagger(A_{\bK},\sigma_{\bK})\right)$ to $\varepsilon_{\bL}^{-1}\left(\mathrm{Inn}^\dagger(A_{\bL},\sigma_{\bL})\right).$ To prove this, we will verify that $\varphi \circ \varepsilon_{\bL} = \varepsilon_{\bK} \circ \varphi$, i.e., the commuting of the right square in the following diagram: 

$$ \xymatrix{ 
 & \mathrm{Inn}^\dagger(A_{\bK},\sigma_{\bK})  \ar@{^{(}->}[rr] \ar@{->}[ldd]^{\varphi}   & & \mathrm{End}(A_{\bK},\sigma_{\bK}) \ar@{->}[ldd]^{\varphi}    \\ 
 & \cO_{\bK,+}^\times \ar@{^{(}->}[rr] \ar@{->}[u]  \ar@{-->}[ldd]^{\varphi ?} &  & \hat{\cO}_{\bK} \cap \A^*_{\bK,f} \ar@{->}[u]_{\varepsilon_{\bK}} \ar@{->}[ldd]^{\varphi}   \\
 \mathrm{Inn}^\dagger(A_{\bL},\sigma_{\bL})  \ar@{^{(}->}[rr] & & \mathrm{End}(A_{\bL},\sigma_{\bL}) & \\ 
\cO_{\bL,+}^\times \ar@{^{(}->}[rr] \ar@{->}[u]  &  & \hat{\cO}_{\bL} \cap \A^*_{\bL,f} \ar@{->}[u]_{\varepsilon_{\bL}} & \\
 }
$$
This is equivalent to the following statement: 
\begin{lem} For every $s \in \A_{\bK,f}^* \cap \hat{\cO}_{\bK}$, we have that
$\varphi(\varepsilon_s) = \varepsilon_{\varphi(s)}.$
\end{lem} 

\begin{proof}
Since $\A_{\bK,f}^* \cap \hat{\cO}_{\bK}$ is isomorphic to the direct product of $\hat\cO_{\bK}^*$ and $J_{\bK}^+$, it suffices to prove this for these subgroups individually. Since the map $J_{\bK}^+ \rightarrow \mathrm{End}(A_{\bK},\sigma_{\bK})$ is injective, it is automatic that $\varepsilon_{\bK}$ and $\varphi$ intertwine with $\varepsilon_{\bL}$ on elements of this subgroup. Now suppose on the other hand that $s \in \hat\cO_{\bK}^*$. For a function $g \in C(X_{\bL})$, we have by definition 
\begin{eqnarray*} \varphi(\varepsilon_s)(g)(x)&=&(\varphi \circ \varepsilon_s \circ \varphi^{-1})g(x)\\ &=&g(\Phi((1,s^{-1})\cdot \Phi^{-1}(x))) \\ &=&  g(\Phi((\vartheta_{\bK}(s),1)\cdot y)),\end{eqnarray*}
where we have written $\Phi(y)=x$ for $y \in X_{\bK}$.

By the density statement in Corollary \ref{dude}, it suffices to compute this action on functions that are supported on $y=\fn \ast \gamma'$ for some $\gamma \in G_{\bK}^{\ab}$ and $\fn \in J_{\bK}^+$. But for such values, and any $\gamma \in G_{\bK}^{\ab}$, we have that 
$$\Phi(\gamma \cdot y) =  \Phi(\gamma \cdot (\fn \ast \gamma')) = \Phi(\fn \ast (\gamma \gamma')) = \varphi(\fn) \ast \Phi(\gamma\gamma'), $$
by Proposition \ref{gab}, and since $\Phi$ is multiplicative on elements in $G_{\bK}^{\ab}$ (Proposition \ref{gab}), we find that that this is further equal to  $$ \varphi(\fn) \ast \left(\Phi(\gamma) \Phi(\gamma')\right) =  \Phi(\gamma)\Phi(\fn \ast \gamma') = \Phi(\gamma)\Phi(y) . $$
We apply this with $\gamma=\vartheta_{\bK}(s)$ and $y=\Phi^{-1}(x)$, to find
\begin{eqnarray*} \varphi(\varepsilon_s)(g)(x) &=& g(\Phi((\vartheta_{\bK}(s),1)) \cdot x) \\ &=& g((1,\varphi(s)^{-1})\cdot x) \\ &=& \varepsilon_{\varphi(s)}(g)(x), 
\end{eqnarray*} which proves the statement. 
\end{proof}
With the proof of this lemma, we have reached the end of the proof of Proposition \ref{integers}. 
\end{proof}

\section{Recovering the additive structure}

\begin{se} In this section, we show that the map $\varphi$ is additive. For this, we prove it is additive (or, what is the same, the identity map)  modulo totally split primes. We do this by lifting elements of the residue field of a totally split prime to integers, which we show are fixed by the map $\varphi$. 

For the rest of this section, we assume that $\varphi \colon (A_{\bK},\sigma_{\bK}) \isomto (A_{\bL},\sigma_{\bL}) $ is a dagger isomorphism of QSM-systems of two number fields $\bK$ and $\bL$. Since $\bK$ and $\bL$ are arithmetically equivalent, they have the same discriminant, which we denote by $\Delta$. We choose a prime ideal $\p$ of $\bK$ of norm $p$, and let $\varphi(\p)$ denote the corresponding prime of $\bL$.
\end{se} 

\begin{notation} For an integer $N$, we let $\Z_{(N)}$ denote the set of integers coprime to $N$. We recall the following notations:  $1_{\p}=(0,\dots,0,1,0,\dots,0)$ denotes the adele with a $1$ at the $\p$-th place and $0$ everywhere else, and $\one_{\p}:=[(1,1_{\p})] \in X_{\bK}$. Finally, when $u \in \hat\cO_{\bK,\p}$, we let $u_{\p}$ denote the integral idele $u_{\p}:=(1,\dots,1,u,1,\dots,1)$, with $u$ in the $\p$-th place and $1$ everywhere else. 
\end{notation} 

\begin{se}
Recall that the map $\varphi \colon \hat\cO_{\bK,\p}^* \isomto \hat\cO_{\bL,\varphi(\p)}^*$ is constructed by canonically identifying both unit groups with the corresponding inertia groups in the maximal abelian extension, which are mapped to each other by the homomorphism $\Phi$. Said otherwise, for $u \in \hat\cO_{\bK,\p}^*$, the element $\varphi(u)$ is defined by $$ [(1,u_{\p})] = [(\vartheta_{\bK}(u_{\p})^{-1},1)] \mapsto \Phi([(\vartheta_{\bK}(u_{\p})^{-1}),1)]) = [(\Phi(\vartheta_{\bK}(u_{\p})^{-1}),1)] =:[(1,\varphi(u)_{\varphi(\p)})].$$
\end{se}

\begin{se}
We consider the composite map 
$$ \lambda_{\bK,\p} \colon \hat\cO_{\bK, \p}^* \rightarrow X_{\bK} \xrightarrow{[\cdot \one_{\p}]} X_{\bK} \colon u \mapsto [(1,u_{\p})] \mapsto [(1,u_{\p} \cdot 1_{\p})] = [(1,(0,\dots,0,u,0,\dots,0)]. $$
This is obviously a group isomorphism onto the image, which we denote by $Z_{\bK,\p}$. \end{se} 

\begin{lem} The following diagram commutes: 
$$ \xymatrix{   \hat\cO_{\bK,\p}^* \ar@{->>}[r]^{\lambda_{\bK,\p}} & Z_{\bK,\p} \\ 
\hat\cO_{\bL,\varphi(\p)}^* \ar@{->>}[r]_{\lambda_{\bL,\varphi(\p)}} \ar@{<-}[u]_{\varphi} & Z_{\bL,\varphi(\p)} \ar@{<-}[u]_{\Phi}
}$$
\end{lem}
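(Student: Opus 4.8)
The plan is to reduce the commutativity of the square to a short diagram chase that combines three facts already in place: Corollary~\ref{imageonem}, which gives $\Phi(\one_\p)=\one_{\varphi(\p)}$; the $G^{\ab}$-equivariance of $\Phi$, i.e.\ that $\Phi(\gamma\cdot x)=\tilde\Phi(\gamma)\cdot\Phi(x)$ for every $\gamma\in G_{\bK}^{\ab}$ and $x\in X_{\bK}$ (where $\gamma$ acts by $[(\gamma',\rho)]\mapsto[(\gamma\gamma',\rho)]$); and the definition of the induced map $\varphi\colon\hat\cO_{\bK,\p}^*\isomto\hat\cO_{\bL,\varphi(\p)}^*$ recalled just before the lemma. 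The equivariance statement is exactly what is computed inside the proof of Proposition~\ref{integers} for $x$ in the dense subset $X_{\bK}^1=\{\fm\ast[(\gamma',1)]\}$ of Corollary~\ref{dude}; since both sides are continuous in $x$ and $\Phi$ is a homeomorphism, it extends to all $x\in X_{\bK}$, in particular to $x=\one_\p$ (which does not itself lie in $X_{\bK}^1$), so I would first record this extension.

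Next I would unwind the maps. Using the coordinatewise multiplication on $X_{\bK}$ (as already used in the proof of Proposition~\ref{gab}) together with the balancing relation $[(1,u_{\p})]=[(\vartheta_{\bK}(u_{\p})^{-1},1)]$ recalled above, for $u\in\hat\cO_{\bK,\p}^*$ one has
$$ \lambda_{\bK,\p}(u)=[(1,u_{\p})]\cdot\one_\p=[(\vartheta_{\bK}(u_{\p})^{-1},1)]\cdot[(1,1_\p)]=\vartheta_{\bK}(u_{\p})^{-1}\cdot\one_\p. $$
Applying $\Phi$, the equivariance and Corollary~\ref{imageonem} give
$$ \Phi\bigl(\lambda_{\bK,\p}(u)\bigr)=\tilde\Phi\bigl(\vartheta_{\bK}(u_{\p})^{-1}\bigr)\cdot\Phi(\one_\p)=\tilde\Phi\bigl(\vartheta_{\bK}(u_{\p})^{-1}\bigr)\cdot\one_{\varphi(\p)}. $$
By the very definition of $\varphi(u)$ one has $[(\tilde\Phi(\vartheta_{\bK}(u_{\p})^{-1}),1)]=[(1,\varphi(u)_{\varphi(\p)})]$ in $X_{\bL}$; multiplying this identity on the right by $\one_{\varphi(\p)}=[(1,1_{\varphi(\p)})]$ turns the previous display into
$$ \Phi\bigl(\lambda_{\bK,\p}(u)\bigr)=[(1,\varphi(u)_{\varphi(\p)})]\cdot[(1,1_{\varphi(\p)})]=[(1,\varphi(u)_{\varphi(\p)}\cdot 1_{\varphi(\p)})]=\lambda_{\bL,\varphi(\p)}\bigl(\varphi(u)\bigr), $$
which is precisely the asserted commutativity $\Phi\circ\lambda_{\bK,\p}=\lambda_{\bL,\varphi(\p)}\circ\varphi$. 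As a byproduct, since $\lambda_{\bK,\p}$, $\lambda_{\bL,\varphi(\p)}$ and $\varphi$ are bijections onto their images, this also yields $\Phi(Z_{\bK,\p})=Z_{\bL,\varphi(\p)}$.

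There is no genuine analytic or algebraic obstacle here: the proof is a chase through results already established in the earlier sections. The only point requiring care is bookkeeping of conventions --- one must keep the coordinatewise monoid structure on $X_{\bK}$, the embedding $G_{\bK}^{\ab}\hookrightarrow X_{\bK}$, $\gamma\mapsto[(\gamma,1)]$, the balancing relation, and the sign built into the definition of $\varphi$ on local units all mutually consistent --- together with the (routine) continuity argument needed to transport the $G^{\ab}$-equivariance of $\Phi$ from the dense set $X_{\bK}^1$ to the single point $\one_\p$.
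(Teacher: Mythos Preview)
Your proof is correct and lands on the same computation as the paper's: both rewrite $\lambda_{\bK,\p}(u)$ as $[(\vartheta_{\bK}(u_{\p})^{-1},1_{\p})]$, invoke Corollary~\ref{imageonem} for $\Phi(\one_{\p})=\one_{\varphi(\p)}$, and then unpack the definition of $\varphi(u)$. The one difference is how you justify $\Phi([(\vartheta_{\bK}(u_{\p})^{-1},1_{\p})])=[(\Phi(\vartheta_{\bK}(u_{\p})^{-1}),1_{\varphi(\p)})]$: you appeal to a global $G_{\bK}^{\ab}$-equivariance of $\Phi$ on $X_{\bK}$, established on the dense set $X_{\bK}^1$ inside the proof of Proposition~\ref{integers} and extended to $\one_{\p}$ by continuity, whereas the paper simply observes that this element already lies in $H_{\bK,\p}=G_{\bK}^{\ab}\times_{\hat\cO_{\bK}^*}\{1_{\p}\}$, on which $\Phi$ was explicitly described in the proof of Proposition~\ref{gab} (together with Corollary~\ref{imageonem}). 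Your route is a small detour; the paper's is more direct and avoids the continuity extension, but the content is the same.
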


\begin{proof} We need to verify that for any $u \in \hat\cO_{\bK,\p}^*$, it holds true that 
$$ \Phi([(1,u_{\p})] \cdot \one_{\p})=[(1,\varphi(u)_{\varphi(\p)})] \cdot \one_{\varphi(\p)}. $$ 
We compute that 
$$ [(1,u_{\p})] \cdot \one_{\p}=[(\vartheta_{\bK}(u_{\p})^{-1},1_{\p})], $$ which belongs to the group 
$$ H_{\bK} = G_{\bK}^{\ab} \times_{\hat \cO_{\bK}^*} \{ 1_{\p} \}, $$
which, as was shown in the proof of Proposition \ref{gab} and in Corollary \ref{imageonem}, is mapped by $\Phi$ to an element of the form 
$$ [(\Phi(\vartheta_{\bK}(u_{\p})^{-1}),1_{\varphi(\p)})] =  [1,\varphi(u)_{\varphi(\p)}] \cdot [(1,1_{\varphi(\p)})].$$
This proves the commutativity of the diagram. 
\end{proof}

\begin{lem}   Consider the map 
$$ \varpi_{\bK,\p} \colon \Z_{(p\Delta)} \hookrightarrow \hat\cO_{\bK,\p}^*  \rightarrow Z_{\bK,\p} \colon a \mapsto [(1,a\cdot 1_{\p})]$$
(and similarly for $\bL$). 
Then the map $\varpi_{\bK,\p}$ is injective, and the associated homeomorphism $\Phi$ is the identity map when restricted to the image of $\varpi$: we have a commutative diagram
$$ \xymatrix{ 
\Z_{(p\Delta)} \ar@{=}[d] \ar@/^12pt/[rr]^{\varpi_{\bK,\p}}  \ar@{->}[r] &\hat\cO^*_{\bK,\p} \ar@{->}[d]^{\varphi} \ar@{->>}[r]_{\lambda_{\bK,\p}} & Z_{\bK,\p} \ar@{->}[d]^{\Phi} \\
\Z_{(p\Delta)} \ar@/_12pt/[rr]_{\varpi_{\bL,\varphi(\p)}}  \ar@{->}[r] &\hat\cO^*_{\bL,\varphi(\p)} \ar@{->>}[r]^{\lambda_{\bL,\varphi(\p)}} & Z_{\bL,\varphi(\p)}  \\
}$$
where the curved arrows are injective. In particular, $\varphi \colon \hat\cO_{\bK,\p}^* \isomto \hat\cO_{\bL,\varphi(\p)}^*$ is constant on $\Z_{(p\Delta)}$.

\end{lem}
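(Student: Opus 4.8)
The plan is to exploit the compatibility of $\varphi$ with the multiplicative and ramification structure established in the previous sections, together with the fact that a rational integer $a$ coprime to $p\Delta$ can be realized simultaneously as a unit at $\p$ and as an element of the ring of integers whose multiplicative behaviour (via the endomorphisms $\varepsilon_s$) is already pinned down. First I would note that injectivity of $\varpi_{\bK,\p}$ is immediate: the composite $\Z_{(p\Delta)}\hookrightarrow \hat\cO_{\bK,\p}^*\xrightarrow{\lambda_{\bK,\p}} Z_{\bK,\p}$ is a composition of injective maps ($\lambda_{\bK,\p}$ is an isomorphism onto $Z_{\bK,\p}$ by the definition of $Z_{\bK,\p}$, and $\Z_{(p\Delta)}\hookrightarrow \hat\cO_{\bK,\p}^*$ is injective since distinct integers have distinct images in $\cO_{\bK,\p}$, being distinct already in $\Z_p\subseteq\hat\cO_{\bK,\p}$). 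So the content is the commutativity of the square, i.e. that $\varphi$ fixes $\Z_{(p\Delta)}$ pointwise.

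The key step is to show $\varphi(a)=a$ in $\hat\cO_{\bL,\varphi(\p)}^*$ for $a\in\Z_{(p\Delta)}$, where on the right $a$ is again viewed as a rational integer sitting in $\Z_p\subseteq \hat\cO_{\bL,\varphi(\p)}$ (this makes sense because $\varphi(\p)$ has norm $p$, hence residue characteristic $p$, so $\bL$ has a prime above $p$ of residue degree $1$ — arithmetic equivalence, via the Corollary after Proposition~\ref{isotoareq}, guarantees the splitting types match). The strategy: a rational integer $a$ coprime to $p\Delta$ is, up to units at the remaining primes, a product/combination that can be tracked through the semigroup isomorphism $\varphi\colon \cO_{\bK,+}^\times\isomto\cO_{\bL,+}^\times$ of Proposition~\ref{integers}. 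More precisely, the rational prime $\ell$ dividing $a$ (with $\ell\nmid p\Delta$) generates the principal ideal $(\ell)$ in both $\cO_{\bK}$ and $\cO_{\bL}$, and this principal ideal — being rational — is canonically the same object on both sides; its factorization into primes of $\bK$ (resp.\ $\bL$) is matched by the norm-preserving ideal isomorphism $\varphi\colon J_{\bK}^+\isomto J_{\bL}^+$ of Proposition~\ref{isoJs} together with arithmetic equivalence. Since $\varepsilon_a$ is the inner endomorphism implemented (up to a phase) by $\mu_{(a)}$ (Proposition~\ref{bloeb}(iii)), and $\varphi(\varepsilon_a)=\varepsilon_{\varphi(a)}$ (the Lemma inside the proof of Proposition~\ref{integers}), we get that $\varphi(a)$ generates, as a totally positive principal ideal of $\bL$, the ideal $\varphi((a))$; but $\varphi((a))$ must be $(a)$ because $a$ is rational and the norm is preserved — more carefully, the rational integer $a$ factors in $\Z$ into rational primes, each of which is a rational prime on both sides, and the prime ideal factorizations above each such rational prime are in norm-preserving bijection via $\varphi$, forcing $\varphi((a))=(a)$ as ideals of $\cO_{\bL}$. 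Hence $\varphi(a)$ and $a$ differ by a totally positive \emph{unit} of $\cO_{\bL}$.

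To kill that unit ambiguity, I would use the local picture at $\p$: the map $\varphi\colon\hat\cO_{\bK,\p}^*\isomto\hat\cO_{\bL,\varphi(\p)}^*$ of Proposition~\ref{unitideles} is a group isomorphism, and combining it with the previous two lemmas of this section the whole diagram reduces to computing $\Phi$ on $Z_{\bK,\p}$, which in turn is governed by the Artin map: $[(1,a\cdot 1_{\p})]=[(\vartheta_{\bK}(a_{\p})^{-1},1_{\p})]$ lies in $H_{\bK,\p}$, and $\Phi$ on $H_{\bK,\p}$ is the group isomorphism $\tilde\Phi=\Phi$ induced on $\mathring G_{\bK,\p}^{\ab}$ (Proposition~\ref{gab} and Corollary~\ref{imageonem}). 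The point is that $\vartheta_{\bK}(a_{\p})$, for $a$ a rational integer coprime to $p$, is a product over the primes $\q\mid a$ of $\bK$ of local Artin symbols, and under the identification $\mathring G^{\ab}_{\bK,\p}\cong \mathring G^{\ab}_{\bL,\varphi(\p)}$ these get matched to the corresponding local symbols for $\bL$ at the primes $\varphi(\q)\mid a$; since $a$ itself (as a rational number) is an idele on both sides with the same global factorization into rational primes, the diagonal rational integer $a$ is sent to the diagonal rational integer $a$. Concretely: $\Phi(\lambda_{\bK,\p}(a))=\lambda_{\bL,\varphi(\p)}(\varphi(a))$ holds by the previous lemma of this section, and the argument above shows the left side equals $\lambda_{\bL,\varphi(\p)}(a)$; injectivity of $\lambda_{\bL,\varphi(\p)}$ then gives $\varphi(a)=a$.

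\textbf{Main obstacle.} The delicate point is making rigorous the claim that the rational integer $a$, viewed as an element of $\hat\cO_{\bK,\p}^*$ and pushed through $\varphi$, lands on the \emph{same} rational integer rather than merely on something with the same ideal-theoretic and norm data — i.e.\ eliminating both the totally-positive-unit ambiguity from the multiplicative side and any residual ambiguity in identifying the residue fields $\bar\bK_\p\cong\F_p\cong\bar\bL_{\varphi(\p)}$ compatibly. The intended resolution is that $\Z$ sits \emph{canonically} (not just up to automorphism) inside $\cO_{\bK}$ and inside each completion, and that $\varphi$, being built ultimately from the identity on $\Z$ via the shared rational structure (same rational primes below, same norms, Artin maps on rational ideles agreeing), cannot move rational integers; the coprimality to $p\Delta$ is exactly what guarantees that $a$ is a unit at $\p$ and unramified everywhere relevant, so the inertia-group identifications of Proposition~\ref{unitideles} apply cleanly and no ramification correction intervenes. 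I expect the bookkeeping of ``which local Artin symbol goes where'' under $\Phi\colon\mathring G^{\ab}_{\bK,\p}\isomto\mathring G^{\ab}_{\bL,\varphi(\p)}$ to be the step requiring the most care, but Propositions~\ref{respectramification}, \ref{gab} and Corollary~\ref{imageonem} supply exactly the needed compatibilities.
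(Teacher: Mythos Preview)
Your proposal reaches the correct conclusion, and the part you call ``the local picture at $\p$'' is in fact the paper's argument in disguise; but you have wrapped it in an unnecessary detour that creates exactly the obstacle you flag at the end.

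The detour is the invocation of Proposition~\ref{integers}. You first push $a$ through the isomorphism $\varphi\colon \cO_{\bK,+}^\times \isomto \cO_{\bL,+}^\times$ of totally positive elements, conclude that $\varphi(a)$ and $a$ generate the same ideal, and are left with a totally positive unit to kill. But this map on totally positive elements is \emph{not} the map $\varphi\colon \hat\cO_{\bK,\p}^*\isomto \hat\cO_{\bL,\varphi(\p)}^*$ the lemma is about, and nothing you prove about the former bears on the latter without further work. The ``main obstacle'' you identify is an artifact of this route.

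The paper bypasses this completely. The single observation you need is that $a\in \bZ\subset \bK^*$ is a principal idele, hence has trivial image under the global Artin map, which gives directly
\[
\varpi_{\bK,\p}(a)=[(1,a\cdot 1_\p)]=(a)\ast [(\vartheta_{\bK}(a),1_\p)]=(a)\ast[(1,1_\p)]=(a)\ast \one_\p.
\]
Then one simply applies $\Phi$ and uses three already-established compatibilities: $\Phi(\fn \ast x)=\varphi(\fn)\ast \Phi(x)$ (Proposition~\ref{isoJs}), $\varphi((a))=(a)$ for $a\in\bZ_{(p\Delta)}$ (since $\varphi$ permutes the distinct primes above each unramified rational prime, as you also note), and $\Phi(\one_\p)=\one_{\varphi(\p)}$ (Corollary~\ref{imageonem}). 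This yields $\Phi(\varpi_{\bK,\p}(a))=(a)\ast\one_{\varphi(\p)}=\varpi_{\bL,\varphi(\p)}(a)$, and the conclusion $\varphi(a)=a$ in $\hat\cO^*_{\bL,\varphi(\p)}$ follows from the previous lemma together with injectivity.

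Your ``product of local Artin symbols over $\q\mid a$'' sentence is a reformulation of exactly the global reciprocity step $\vartheta_{\bK}(a)=1$, so your Thread~B is the paper's proof; delete the excursion through $\cO_{\bK,+}^\times$ and the argument becomes a three-line computation with no unit ambiguity to resolve.
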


\begin{proof}

To prove the injectivity of $ \varpi_{\bK,\p}$, if $(1,a\cdot 1_{\p}) \sim (1,b\cdot 1_{\p})$ then there exists a unit $w \in \bar{\cO_{\bK,+}^*}$ with $a\cdot 1_{\p}=wb \cdot 1_{\p}$; hence $w \in \bQ \cap \bar{\cO_{\bK,+}^*} = \{1 \}$, so $a=b$. 

To prove the commutativity of the diagram, observe that for $a \in \Z_{(p\Delta)}$, we have 
$$ \varpi_{\bK,\p}(a) = (a) \ast [(\vartheta_{\bK}(a),1_{\p})] = (a) \ast [(1,1_{\p})] = (a)*\one_{\p}, $$
since $a \in \Z \subseteq \bK^*$ has trivial image under the reciprocity map. We compute the image by $\Phi$:
\begin{eqnarray*}
\Phi(\varpi_{\bK,\p} (a)) &=& \Phi((a) \ast \one_{\p})\\ 
&=& \varphi((a)) \ast \Phi(\one_{\p})\\
&=& (a) \ast \one_{\varphi(\p)}\\
&=& \varpi_{\bL,\varphi(\p)}(a)
\end{eqnarray*}
In this proof, we have used that $\varphi$ fixes the ideal $(a) \in J_{\bQ}^+$ for $a \in \Z_{(p\Delta)} $; by multiplicativity of $\varphi$, it suffices to prove this for $a$ a rational prime that is unramified in $\bK$ (viz., coprime to $\Delta$). Decompose such $(a)$ in $\bK$ as $(a)=\p_1\dots \p_r$ (with all $\p_i$ distinct, since $a$ is unramified). Since $\varphi=\alpha_1$ is a permutation of the distinct primes above the given rational prime $a$, we find that $\varphi((a))=\p_{\sigma(1)} \dots \p_{\sigma(r)}$ for some permutation $\sigma$ of the indices. Hence $\varphi((a))=(a)$, as desired. 

In the computation, we also used  that $\Phi(\one_{\p})=\one_{\varphi(\p)}$, which was shown in the previous lemma. 

Finally, the previous lemma (commutativity of the right square in the diagram) and the injectivity of the maps $\varpi$ on $\Z_{(p\Delta)}$ shows that the map $\varphi$ is the identity on $\Z_{(p\Delta)}$. 
\end{proof}

\begin{thm}
The map $\varphi \colon \cO^\times_{\bK,+} \isomto \cO^\times_{\bL,+}$, extended by $\varphi(0)=0$, is additive.
\end{thm}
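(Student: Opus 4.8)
The plan is to show that for a totally split prime $\p$ of $\bK$ of norm $p$, reduction modulo $\p$ turns the multiplicative statement ``$\varphi$ is a semigroup isomorphism of $\cO_{\bK,+}^\times$ fixing $\Z_{(p\Delta)}$'' into the statement ``$\varphi$ induces the identity on $(\cO_{\bK}/\p)^\times=\F_p^\times$'', and then to upgrade this to additivity by exploiting that $\F_p$ is generated additively from $1$ in a way detectable multiplicatively via the Teichm\"uller lift. Concretely, first I would fix a rational prime $p$ totally split in $\bK$ (hence, by arithmetic equivalence, totally split in $\bL$ as well, with a matching of the primes above $p$ under $\varphi$) and coprime to $\Delta$. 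Reduction modulo $\p$ gives a surjection $\cO_{\bK,+}^\times \to \F_p^\times$ (every nonzero residue is hit by a totally positive integer, e.g. by adding a large enough totally positive multiple of a fixed totally positive integer in $\p$, or by the approximation theorem), and similarly for $\varphi(\p)$ on the $\bL$-side; one checks that $\varphi$ descends to a group homomorphism $\bar\varphi\colon \F_p^\times \isomto \F_p^\times$ because $\varphi$ preserves divisibility by $\p$ (this is the content of Proposition~\ref{bloeb} and Proposition~\ref{respectramification}: divisibility by $\p$ is an algebraic condition preserved by the QSM-isomorphism, since $\varphi(\p)\ast$ matches $\p\ast$, and $\cO_{\bK,+}^\times$ sits inside the monoid through inner endomorphisms).

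The key point is then that $\bar\varphi$ is forced to be the identity. The group $\F_p^\times$ is cyclic of order $p-1$, so $\bar\varphi$ is raising to some power $e$ coprime to $p-1$; to pin down $e=1$ I would use the Teichm\"uller lift. For each $p$-adic unit $u\in\hat\cO_{\bK,\p}^*=\Z_p^*$ (using that $\p$ is totally split), its Teichm\"uller representative $\omega(u)$ is the unique $(p-1)$-st root of unity in $\Z_p$ congruent to $u$; these roots of unity form the torsion subgroup of $\hat\cO_{\bK,\p}^*$, a canonical subgroup, and since $\varphi\colon \hat\cO_{\bK,\p}^*\isomto\hat\cO_{\bL,\varphi(\p)}^*$ is a topological group isomorphism it must carry torsion to torsion, i.e. $\varphi$ restricts to an isomorphism of the groups $\mu_{p-1}$ of $(p-1)$-st roots of unity on both sides. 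But by the previous lemma $\varphi$ is the identity on $\Z_{(p\Delta)}$, and we have enough integers in $\Z_{(p\Delta)}$ in each residue class mod $p$ to see that the induced map on $\mu_{p-1}\subset\Z_p^*$ — which is determined by its effect modulo $p$, i.e. by $\bar\varphi$ — agrees with the identity on a set of representatives; hence $\bar\varphi=\mathrm{id}$, so reduction of $\varphi(x)$ mod $\varphi(\p)$ equals reduction of $x$ mod $\p$, under the canonical identification $\cO_{\bK}/\p\cong\F_p\cong\cO_{\bL}/\varphi(\p)$.

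Once $\varphi(x)\equiv x\bmod{\p}$ for $\p$ ranging over all sufficiently large totally split primes (there are infinitely many, by Chebotarev, and their norms are unbounded), additivity follows: for $x,y\in\cO_{\bK,+}^\times$ with $x+y$ also totally positive, the element $\varphi(x+y)-\varphi(x)-\varphi(y)\in\cO_{\bL}$ reduces to $0$ modulo $\varphi(\p)$ for all such $\p$, hence is divisible by infinitely many primes of unbounded norm, hence is $0$. (For the general additive statement with $\varphi(0)=0$ one checks the degenerate cases separately, and one notes $x+y$ totally positive is automatic when $x,y$ are.) I would phrase the reduction-mod-$\p$ comparison carefully so that the two canonical isomorphisms $\cO_{\bK}/\p\cong\F_p$ and $\cO_{\bL}/\varphi(\p)\cong\F_p$ are genuinely compatible with $\varphi$; this compatibility is exactly what the Teichm\"uller/torsion argument buys, and it is the main obstacle — everything else is either Chebotarev density or the elementary ``divisible by infinitely many large primes implies zero'' principle. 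A secondary technical point to handle with care is the surjectivity of reduction $\cO_{\bK,+}^\times\to\F_p^\times$ and that $\varphi$ really does preserve the ``divisible by $\p$'' ideal-theoretic condition at the level of the monoid $\cO_{\bK,+}^\times$, but both follow from the structural results already established (Propositions~\ref{bloeb}, \ref{respectramification}, \ref{integers} and the lemmas of this section).
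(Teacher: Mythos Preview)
Your approach is essentially the paper's: reduce modulo a totally split prime $\p$, use the Teichm\"uller lift together with the lemma that $\varphi$ fixes $\Z_{(p\Delta)}$ inside $\hat\cO_{\bK,\p}^*$ to show the induced map on $\F_p^\times$ is the identity, then apply Chebotarev and ``divisible by infinitely many primes of unbounded norm implies zero''. The only cosmetic difference is that you phrase the Teichm\"uller step via ``torsion goes to torsion'' for the topological group isomorphism $\hat\cO_{\bK,\p}^*\isomto\hat\cO_{\bL,\varphi(\p)}^*$, whereas the paper writes out $\varphi(\tau(a))=\lim_n\varphi(a)^{p^n}=\tau(\varphi(a))=\tau(a)$ directly from continuity and multiplicativity.

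One minor imprecision worth tightening: ``$\varphi$ preserves divisibility by $\p$'' is not by itself enough to make $\bar\varphi$ well-defined on $\F_p^\times$ from the $\cO_{\bK,+}^\times$ side, since $x\equiv x'\bmod\p$ is not a purely multiplicative relation between $x,x'\in\cO_{\bK,+}^\times$. What actually makes the descent work is your later point that $\varphi$ on $\hat\cO_{\bK,\p}^*$ preserves the principal units $1+\p\hat\cO_{\bK,\p}$ (the maximal pro-$p$ subgroup, complementary to $\mu_{p-1}$); so define $\bar\varphi$ through the local units from the outset, and the argument goes through cleanly.
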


\begin{proof}
Choose a rational prime $p$ that is totally split in $\bK$ (in particular, unramified). Then, since $\bK$ and $\bL$ are arithmetically equivalent, we have in particular that $p$ is also totally split in $\bL$. Choose a prime $\p \in J_{\bK}^+$ above $p$, so $f(\p|\bK)=1$; then $f(\varphi(\p)|\bL)=1$, too. 

From the map of localisations $ \varphi \colon \hat\cO^*_{\bK,\p} \isomto \hat\cO^*_{\bL,\varphi(\p)}$, we now construct a multiplicative map $\tilde{\varphi}$ of residue fields, using the Teichm\"uller lift $$\tau_{\bK,p} \colon \bar\bK^*_{p}\cong \F^*_p \hookrightarrow \hat\cO^*_{\bK,\p} \cong \Q^*_p$$ in the following diagram: 

$$ \xymatrix{ \hat\cO^*_{\bK,\p}\ar@{->}[r]^{\varphi} & \hat\cO^*_{\bL,\varphi(\p)}\ar@{->}[d]^{\mathrm{mod}\, \varphi(\p)}  \\ \bar\bK^*_{\p}\ar@{-->}[r]^{\tilde{\varphi}} \ar@{^{(}->}[u]^{\tau_{\bK,p}} & \bar\bL^*_{\varphi(\p)}}$$
The map $\tilde{\varphi}$ is multiplicative by construction. We will now prove that its extension by $\tilde{\varphi}(0)=0$ is additive (or, equivalently, $\tilde{\varphi} \colon \F^*_p \rightarrow \F^*_p$ is the identity map). 

We extend the Teichm\"uller character in the usual way to 
$$ \tau_{\bK,p} \colon \hat\cO^*_{\bK,\p} \rightarrow \hat\cO^*_{\bK,\p} \colon x \mapsto \lim_{n \rightarrow +\infty} x^{p^n}. $$

Now let $\tilde{a}$ denote any residue class in $\bar{\bK}_{\p}^* \cong \F_p$. Choose an integer $a$ that is congruent to $\tilde{a}$ mod $\p$ and coprime to the discriminant $\Delta$ (which is possible by the Chinese remainder theorem--- observe that $p$ and $\Delta$ are coprime). It holds true that $\tau_{\bK,p}(\tilde{a})=\tau_{\bK,p}(a)$ for the extended Teichm\"uller map. 
Since $\varphi$ is continuous in the $p$-adic topology and multiplicative, we find that 
\begin{eqnarray*} \varphi(\tau_{\bK,p}({a})) &=& \varphi \left( \lim_{n \rightarrow +\infty} a^{p^n} \right) \\ &=& \lim_{n \rightarrow +\infty} \varphi({a})^{p^n} \\ &=&  \tau_{\bL,p} ( \varphi({a} )) \\ &=& \tau_{\bL,p}({a}) \end{eqnarray*}
(the last equality follow from the lemma above), 
so that we find 
\begin{eqnarray*} \tilde{\varphi}(\tilde{a}) &=& \varphi(\tau_{\bK,p}(a)) \, \mathrm{mod}\, \varphi(\p)\\ &=& \tau_{\bL,p}(a)\, \mathrm{mod}\, \varphi(\p)\\ &=& \tilde{a} \, \mathrm{mod}\, \varphi(\p). \end{eqnarray*}

Hence $\varphi$ is the identity map modulo any totally split prime, so for any such  prime $\p \in J_{\bK}^+$ and any $x,y \in \cO_{\bK,+}$, we have 
$$ \varphi(x+y) = \varphi(x) + \varphi(y)\, \mathrm{mod}\, \varphi(\p).$$ Since there are totally split primes of arbitrary large norm (by Chebotarev), we find that $\varphi$ itself is additive. 
\end{proof}

\begin{thm}
Let $\bK$ and $\bL$ denote two number fields whose QSM-systems $(A_{\bK},\sigma_{\bK})$ and $(A_{\bL},\sigma_{\bL})$ are isomorphic. Then $\bK$ and $\bL$ are isomorphic as fields. 
\end{thm}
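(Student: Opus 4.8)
The plan is simply to package the structural results of the last several sections. Throughout we take $\varphi$ to respect the dagger subalgebras --- this is condition (ii) of Theorem \ref{main} --- so that all of the preceding propositions apply, and they already hand us almost everything. By Proposition \ref{integers}, $\varphi$ restricts to a multiplicative semigroup isomorphism $\varphi\colon(\cO^\times_{\bK,+},\times)\isomto(\cO^\times_{\bL,+},\times)$, and by the preceding theorem its extension by $\varphi(0)=0$ to $\cO_{\bK,+}=\cO^\times_{\bK,+}\cup\{0\}$ is additive. Since a monoid isomorphism sends the identity to the identity, $\varphi(1)=1$. Hence $\varphi\colon\cO_{\bK,+}\isomto\cO_{\bL,+}$ is an isomorphism of semirings.

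The first step is to promote this to a ring isomorphism $\widetilde\varphi\colon\cO_{\bK}\isomto\cO_{\bL}$. The key point is that $\cO_{\bK,+}$ additively generates $\cO_{\bK}$: given $x\in\cO_{\bK}$, choose a positive rational integer $n$ with $n>-\sigma(x)$ for every real embedding $\sigma$ of $\bK$ and with $x+n\neq 0$; then $n$ and $x+n$ are totally positive, so $x=(x+n)-n$ with $x+n,\,n\in\cO_{\bK,+}$ (when $\bK$ is totally imaginary the positivity condition is vacuous and the point is trivial). Consequently the rule $\widetilde\varphi(a-b):=\varphi(a)-\varphi(b)$ for $a,b\in\cO_{\bK,+}$ is well defined: if $a-b=a'-b'$ then $a+b'=a'+b$ in $\cO_{\bK,+}$, so $\varphi(a)+\varphi(b')=\varphi(a')+\varphi(b)$ by additivity. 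Additivity and multiplicativity of $\varphi$, together with $\varphi(1)=1$, then make $\widetilde\varphi$ a unital ring homomorphism, and it is bijective since the same construction applied to $\varphi^{-1}$ produces a two-sided inverse.

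The second step is the passage to the fields. Since $\bK=\mathrm{Frac}(\cO_{\bK})$ and $\bL=\mathrm{Frac}(\cO_{\bL})$, the ring isomorphism $\widetilde\varphi$ extends uniquely to a field isomorphism $\bK\isomto\bL$, $a/b\mapsto\widetilde\varphi(a)/\widetilde\varphi(b)$. This is the desired isomorphism, and, as for every earlier step, it is produced explicitly from $\varphi$.

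A word on where the difficulty lies: this last argument is entirely routine. The only things needing a moment's thought are that $\cO_{\bK,+}$ additively generates $\cO_{\bK}$ (handled above) and that well-definedness of $\widetilde\varphi$ uses only additivity of $\varphi$ --- so one does not need to know independently that $\varphi$ fixes the rational integers, although it does. The real obstacles were all met earlier: recovering the semigroup of ideals and the homeomorphism $\Phi\colon X_{\bK}\isomto X_{\bL}$ from the non-involutive dagger-algebra (Proposition \ref{isoJs}), extracting the abelianized Galois group and the compatibility with ramification (Propositions \ref{gab} and \ref{respectramification}), identifying the multiplicative semigroup $\cO^\times_{\bullet,+}$ via dagger inner endomorphisms (Proposition \ref{integers}), and recovering additivity via the Teichm\"uller lift at totally split primes (the preceding theorem).
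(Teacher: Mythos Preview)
Your proof is correct and follows essentially the same approach as the paper: extend the semiring isomorphism $\cO_{\bK,+}\isomto\cO_{\bL,+}$ to the full ring of integers and then to the field of fractions. The only difference is cosmetic --- you use the Grothendieck-group construction (writing each integer as a difference of totally positive elements), whereas the paper picks a free $\Z$-basis of $\cO_{\bK}$ consisting of totally positive elements and extends $\Z$-linearly.
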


\begin{proof}
We have just seen that $\varphi$ induces an isomorphism of semigroups of totally positive integers (Proposition \ref{integers}). Now $\cO_{\bK}$ always has a free $\Z$-basis consisting of totally positive elements; indeed, if $y_1=1,y_2,\dots,y_n$ is any basis, replace it by $x_1=1,x_2=y_2+k_2,\dots,x_n=y_n+k_n$ where $k_i$ are integers with $k_i > - \sigma(y_i)$ for all real embeddings $\sigma$ of $\bK$. Then we can extend $\varphi \, : \cO_{\bK} \isomto \cO_{\bL}$ by 
$$ \varphi(\sum n_i x_i) \mapsto \sum_n n_i \varphi(x_i); $$
by the above this is well-defined, additive and multiplicative, and hence it extends further to an isomorphism of the quotient fields. 
\end{proof}

\part{$L$-SERIES AND QSM-ISOMORPHISM} \label{part2}
 
{Let $\chi$ denote a character in the Pontrjagin dual of $G_{\bK}^{\ab}$. We set 
$$ L_{\bK}(\chi,s) := \sum_{\fn \in J_{\bK}^+} \frac{\chi(\vartheta_{\bK}(\fn))}{N_{\bK}(\fn)^s}, $$
where it is always understood that we set $\chi(\vartheta_{\bK}(\fn))=0$ if $\fn$ is not coprime to the conductor $\f_{\chi}$ of $\chi$. 
This is also the Artin $L$-series for $\chi$ considered as a representation of the Galois group of the finite extension $\bK_\chi / \bK$ through which $\chi$ factors injectively (\cite{Neukirch}, VII.10.6).}

In the next few sections, we first show that (iii) $\Rightarrow$ (ii) in Theorem \ref{main2},
namely the identity of the $L$-functions implies the existence of a dagger isomorphism
of the quantum statistical mechanical systems, that is, a $C^*$-algebra isomorphism
$\varphi: A_{\bK} \isomto A_{\bL}$ intertwining the time evolutions, $\varphi\circ \sigma_{\bK}=
\sigma_{\bL}\circ \varphi$ and preserving the dagger subalgebras $\varphi: A^{\dagger}_{\bK}\isomto
A^{\dagger}_{\bL}$.

\section{QSM-isomorphism from matching $L$-series: compatible isomorphism of ideals}

\begin{prop}
Let $\bK$ and $\bL$ denote two number fields. Suppose $\psi$ is an isomorphism
$$ \psi \, : \, \widehat{G}^{\ab}_{\bK} \isomto \widehat{G}^{\ab}_{\bL}$$
that induces an identity of the respective $L$-functions
$$ L_{\bK}(\chi,s) = L_{\bL}(\psi(\chi),s). $$
Then there exists a norm preserving semigroup isomorphism 
$$\Psi \, : \, J_{\bK}^+ \rightarrow J_{\bL}^+, $$
which is compatible with the Artin reciprocity map under $\psi$ in the sense that 
\begin{equation} \label{NNN} \psi(\chi)(\vartheta_{\bL}(\Psi(\fn))) = \chi(\vartheta_{\bK}(\fn)) \end{equation}
for all characters $\chi$ and ideals $\fn$ such that the conductor of $\chi$ is coprime to $N_{\bK}(\fn)$ (which is also equivalent to (iv) in Theorem \ref{main3} for $\hat{\psi}:=(\psi^{-1})^*$). 
\end{prop}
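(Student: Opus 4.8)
The plan is to extract the semigroup isomorphism $\Psi$ by using the matching of $L$-series as a family of equations among generating functions whose coefficients count ideals with prescribed norm and prescribed Artin image. Concretely, fix a modulus-type datum: choose a finite quotient $G \twoheadrightarrow G_{\bK}^{\ab}$, or more precisely work with the finite layers $G_{\bK,\fm}^{\ab}$, and consider for a fixed integer $m$ and a fixed element $g \in G_{\bK,\fm}^{\ab}$ the counting function
$$
a_{\bK}(m,g) := \#\{\, \fn \in J_{\bK}^+ \;:\; (\fn,\fm)=1,\ N_{\bK}(\fn)=m,\ \vartheta_{\bK}(\fn) = g \text{ in } G_{\bK,\fm}^{\ab}\,\}.
$$
By orthogonality of characters on the finite group $G_{\bK,\fm}^{\ab}$, this number is a finite linear combination (with coefficients the character values at $g$) of the Dirichlet coefficients of the $L$-series $L_{\bK}(\chi,s)$ for $\chi$ ranging over characters of conductor dividing $\fm$. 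Since $\psi$ matches every such $L$-series with $L_{\bL}(\psi(\chi),s)$, and since $\psi$ is a group isomorphism of the full Pontrjagin duals (hence, by restriction, an isomorphism of the finite character groups, dual to an isomorphism $\hat\psi$ of the finite layers $G_{\bK,\fm}^{\ab} \cong G_{\bL,\hat\psi(\fm)}^{\ab}$ — here one must first check that $\psi$ respects conductors layer-by-layer, which follows because $\psi$ being a bijection of character groups automatically matches the subgroups of characters trivial on a given open subgroup), comparing Dirichlet coefficients gives
$$
a_{\bK}(m,g) = a_{\bL}\bigl(m,\hat\psi(g)\bigr) \quad\text{for all } m \text{ coprime to the relevant modulus and all } g.
$$

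From this numerical matching I would build $\Psi$ prime by prime. For a rational prime $p$ unramified in both fields (all but finitely many), the primes of $\bK$ above $p$ are distinguished, within a fixed norm $p^f$, by their images under $\vartheta_{\bK}$ in a sufficiently large finite layer; the equality of counting functions $a_{\bK}(p^f, \cdot) = a_{\bL}(p^f, \hat\psi(\cdot))$ forces a bijection between the primes of $\bK$ of norm $p^f$ and those of $\bL$ of norm $p^f$ which is compatible with the Artin images via $\hat\psi$. One has to take $\fm$ large enough (a power of $p$ times a fixed auxiliary modulus, or pass to the limit over layers) so that distinct primes of the same norm have distinct Artin images in the chosen layer — this is exactly Chebotarev/class field theory: the Frobenius elements $\vartheta_{\bK}(\p)$ for $\p$ above $p$ generate, and are separated in, large enough quotients. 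For the finitely many ramified primes, handle them by taking norms and the decomposition $\prod \p^{e_i}$, using that ramified primes are already matched through the auxiliary modulus; alternatively, since $\bK$ and $\bL$ are arithmetically equivalent (which (iii) implies via the trivial character plus Perlis-type consequences, or can be derived here directly), the ramified primes above a given $p$ with given residue data are matched combinatorially. Extend $\Psi$ to all of $J_{\bK}^+$ multiplicatively; norm-preservation is built in, and multiplicativity makes it a semigroup homomorphism; bijectivity follows by running the same construction for $\psi^{-1}$.

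Finally, the compatibility identity \eqref{NNN} is verified on primes and then extended multiplicatively: for a prime $\p$ coprime to $\f_\chi$, the construction of $\Psi(\p)$ was precisely arranged so that $\hat\psi(\vartheta_{\bK}(\p)) = \vartheta_{\bL}(\Psi(\p))$ in the finite layer where $\chi$ (resp. $\psi(\chi)$) factors, whence $\psi(\chi)(\vartheta_{\bL}(\Psi(\p))) = \chi(\vartheta_{\bK}(\p))$; since both sides are multiplicative in $\fn$ over ideals coprime to $\f_\chi$, the identity propagates. The translation to condition (iv) of Theorem \ref{main3} with $\hat\psi = (\psi^{-1})^*$ is then formal: $\Psi$ is the required semigroup isomorphism, condition (a) is norm-preservation, and condition (b) is a restatement of \eqref{NNN} together with the unramified-prime matching, since $\mathrm{Frob}_\p$ in $\bK'/\bK$ is the image of $\vartheta_{\bK}(\p)$ in $G_{\bK}^{\ab}/N$ and $\hat\psi$ carries $N$ to $(\psi^{-1})^*(N)$ by Pontrjagin duality.

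The main obstacle I expect is the bookkeeping around conductors and moduli: one must ensure that the character orthogonality argument is applied in a layer $G_{\bK,\fm}^{\ab}$ large enough to separate the finitely many primes of a given norm, while keeping $\fm$ coprime to those primes, and that $\psi$ genuinely induces compatible isomorphisms on all these finite layers (not merely on the full profinite groups). Making the inductive/limiting construction of $\Psi$ well-defined and independent of the auxiliary choices — and checking it at the ramified primes, where the naive "count primes of norm $p^f$" must be upgraded to matching full factorization types — is where the real care is needed; the arithmetic-equivalence consequences of (iii) are the tool that makes the ramified case go through cleanly.
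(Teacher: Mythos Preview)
Your overall strategy---character orthogonality to turn the $L$-series identities into equalities of counting functions, then using those counts to build $\Psi$ compatibly with Artin reciprocity---is exactly the paper's approach. Two points deserve correction, however.

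First, your justification that $\psi$ respects the conductor layers (``$\psi$ being a bijection of character groups automatically matches the subgroups of characters trivial on a given open subgroup'') is wrong as stated: an abstract isomorphism of Pontrjagin duals carries open subgroups to open subgroups, but has no reason to match the \emph{specific} subgroup $\widehat{G}^{\ab}_{\bK,n}$ (characters of conductor coprime to $n$) to $\widehat{G}^{\ab}_{\bL,n}$. The paper deduces this from the $L$-series hypothesis itself: if $\f_\chi$ is not coprime to $n$, then $L_{\bK}(\chi,s)$ is missing an Euler factor at some prime above a rational prime dividing $n$; by the equality $L_{\bK}(\chi,s)=L_{\bL}(\psi(\chi),s)$, the same Euler factor is missing on the right, so $\f_{\psi(\chi)}$ is not coprime to $n$ either. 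You flag this later as the ``main obstacle'', but you should replace your first justification with this Euler-factor argument.

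Second, the paper avoids your split into unramified and ramified primes, and your worry about choosing a layer large enough to separate primes of equal norm, by a cleaner bookkeeping: rather than working prime-by-prime in a layer coprime to $p$, it fixes an integer $n$, sums the norm-$n$ coefficient identity over all characters of an arbitrary finite quotient of $G^{\ab}_{\bK,n}$, and then passes to the inverse limit over all such quotients. This yields directly that, for any ideal $\tilde\fn$ of norm $n$, the set of ideals $\fm\in J_{\bL}^+$ with $N_{\bL}(\fm)=n$ and $\pi_{G^{\ab}_{\bL,n}}(\vartheta_{\bL}(\fm))=\pi_{G^{\ab}_{\bL,n}}((\psi^{-1})^*\vartheta_{\bK}(\tilde\fn))$ has the same cardinality as the corresponding set on the $\bK$-side, which is a singleton because the Artin map is injective on ideals dividing $n$. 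This defines $\Psi(\tilde\fn)$ uniquely, for \emph{all} ideals at once, with no separate treatment of ramified primes and no need to invoke arithmetic equivalence or combinatorial matching of factorization types.
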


\begin{proof} 
Since $\psi(1)=1$, the zeta functions ($L$-series for the trivial character) match on both sides: 
$$ \zeta_{\bK}(s)=\zeta_{\bL}(s).$$
This is arithmetic equivalence, and it shows in particular that there is a bijection between the sets of primes of $\bK$ and $\bL$ above a given rational prime $p$ and with a given inertia degree $f$. We need to match these primes in such a way that they are compatible with Artin reciprocity. We want to do this by mapping a prime $\p$ of $\bK$ to a prime $\q$ of $\bL$ above the same $p$, with the same inertia degree, and such that 
\begin{equation} \label{poeh} \psi(\chi)(\vartheta_{\bL}(\q)) = \chi(\vartheta_{\bK}(\p))  \end{equation} for all characters $\chi$ whose conductor is coprime to $p$. The main point is to show that it is always possible to find such $\q$, and to show that one may perform this in a bijective way between primes. We prove this by using a combination of $L$-series as counting function for the number of such ideals $\q$. 

The identification of $L$-series means that for any character $\chi$, we have 
\begin{equation} \label{tate} \sum_{\fn \in J_{\bK}^+} \frac{\chi(\vartheta_{\bK}(\fn))}{N_{\bK}(\fn)^s} =  \sum_{\fm \in J_{\bL}^+} \frac{\psi(\chi)(\vartheta_{\bL}(\fm))}{N_{\bL}(\fm)^s}. \end{equation}
We fix an integer $n$ and consider the norm-$n$ part of this identity: 
\begin{equation} \label{tate2} \sum_{{\fn \in J_{\bK}^+}\atop{N_{\bK}(\fn)=n}} {\chi(\vartheta_{\bK}(\fn))}=  \sum_{{\fm \in J_{\bL}^+}\atop{N_{\bL}(\fm)=n}} \psi(\chi)(\vartheta_{\bL}(\fm)). \end{equation}
In this notation, remember that we have set $\chi$ equal to zero on ideals not coprime to its conductor. 

Recall our notation $G_{\bK,\fn}^{\ab}$ for the Galois group of the maximal abelian extension of $\bK$ that is unramified above the prime divisors of an ideal $\fn$. We will take $\fn$ for the given integer $n$. 

We fix a finite quotient group $G$ of $$G_{\bK}^{\ab} \overset{\pi_G}{\twoheadrightarrow} G,$$ and consider only characters that factor over $G$, i.e., that are of the form $\chi \circ \pi_G$ for $\chi$ in the finite group $\widehat{G}$ (which we consider as a subgroups of $\widehat{G}_{\bK}^{\ab}$ by precomposing with $\pi_G$). We consider only $n$ that are coprime to the conductor of any character in $\widehat{G}$, so actually $\pi_G$ factors over $G_{\bK,n}^{\ab}$, and for such $n$, we sum the identity  (\ref{tate2}) over this group $\widehat{G}$, 
times the function $\chi(\pi_G(\gamma^{-1}))$ for a fixed element $\gamma \in G_{\bK}^{\ab}$ --- interchanging the order of summation, we find
 
\begin{equation} \label{integratedtate} \sum_{{\fn \in J_{\bK}^+}\atop{N_{\bK}(\fn)=n}}  \left(\sum_{\widehat{G}} \chi(\pi_G(\gamma)^{-1})\chi(\vartheta_{\bK}(\fn)) \right)   = \sum_{{\fm \in J_{\bL}^+}\atop{N_{\bL}(\fm)=n}}  \left(\sum_{\widehat{G}} \chi(\pi_G(\gamma)^{-1})\psi(\chi)(\vartheta_{\bL}(\fm)) \right).
 \end{equation}

Let us introduce the following set of ideals for $n \in \Z_{ \geq 1}$ and $\gamma \in G_{\bK}^{\ab}$:
$$ B_{G,n}(\gamma) = \{ \fn \in J_{\bK}^+ \, : \, N_{\bK}(\fn)=n \, \mbox{ and }  \, \pi_G(\vartheta_{\bK}(\fn))=\pi_G(\gamma) \}  $$
and denote the cardinality of this set by $$b_{G,n}(\gamma):=\#B_{G,n}(\gamma),$$ (or $b_{\bK,G,n}(\gamma)$ if we want to indicate the dependence on the ground field $\bK$). As is well-known, the value of the left hand side of Equation (\ref{integratedtate}) is 
$$\mbox{LHS}(\ref{integratedtate}) =  {|G|} \cdot b_{\bK,G,n}(\gamma). $$

We now perform a base change in the bracketed sum on the right hand side of (\ref{integratedtate}), using the homomorphism $$\psi \, : \, \widehat{G}_{\bK}^{\ab} \rightarrow 
\widehat{G}_{\bL}^{\ab},$$ which we can do since $\psi$ preserves the subgroups indexed by $n$:
$$ \psi(\widehat G_{\bK,n}^{\ab}) = \widehat G_{\bL,n}^{\ab}. $$
Indeed, if $\f_\chi$ is not coprime to $n$, $L_{\bK}(\chi,s)$ has a missing Euler factor at a prime number $p$ dividing $n$. Hence, by the equality of $L$-series, also $L_{\bL}(\psi(\chi),s)$ has such a missing Euler factor, so $\f_{\psi(\chi)}$ is not coprime to $p$ (hence $n$).  

To ease notation we write $(\psi^{-1})^*(G)=G'$. We also write $\eta=\psi(\chi)$. Then the bracketed expression on the right hand side of  (\ref{integratedtate}) becomes
\begin{equation} \label{RHS2} \sum_{\widehat{G'}} \psi^{-1}(\eta)(\pi_G(\gamma)^{-1})\eta(\pi_{G'}(\vartheta_{\bL}(\fm))) 
 \end{equation}
Observe that for fixed $\fm$ coprime to $\f_\eta$,  $$\Xi_{\fm} \, : \, \eta \mapsto \psi^{-1}(\eta)(\pi_G(\gamma)^{-1})\eta(\pi_{G'}(\vartheta_{\bL}(\fm)))$$ is a character on $\widehat{G'}$. Thus, 
$$ \sum_{\widehat{G'}} \psi^{-1}(\eta)(\pi_G(\gamma)^{-1})\eta(\pi_{G'}(\vartheta_{\bL}(\fm))) = \left\{ \begin{array}{ll} {|G'|} & \mbox{ if }\ \Xi_{\fm} \equiv 1; \\ 0 & \mbox{ otherwise.}\end{array} \right.$$
Now $\Xi_{\fm}\equiv 1$ means that 
$$ \eta(\pi_{G'}(\vartheta_{\bL}(\fm))) = \psi^{-1}(\eta)(\pi_{G}(\gamma)) \mbox{ for all } \eta \in G'. $$
Since the right expression is equal to $\eta(\pi_{G'}((\psi^{-1})^* \gamma))$, we find that $\Xi_{\fm} \equiv 1$ means that 
$$\pi_{G'}(\vartheta_{\bL}(\fm)) = \pi_{G'}((\psi^{-1})^*(\gamma)).$$
Plugging everything back in, we find that the right hand side of Equation (\ref{integratedtate}) becomes
\begin{eqnarray*}  \mbox{RHS}(\ref{integratedtate}) &=&  {|G'|} \cdot \# \{\fm \in J_{\bL}^+ \mbox{ with } N_{\bL}(\fm)=n \mbox{ and }\pi_{G'}(\vartheta_{\bL}(\fm))=\pi_{G'}((\psi^{-1})^*(\gamma)) \}  \\ &=&  {|G'|} \cdot  b_{\bL,G',n}((\psi^{-1})^*(\gamma)). \end{eqnarray*}

Since $\psi$ is a group isomorphism of finite abelian groups, $|G'|=|\widehat{G'}|=|\psi(\widehat{G})|=|G|$, so we conclude that for all finite quotient groups $G$ of $G_{\bK,n}^{\ab}$ 
\begin{equation} \label{count}  b_{\bK,G,n}(\gamma) = b_{\bL,(\psi^{-1})^*G,n}((\psi^{-1})^*(\gamma)). \end{equation}
Now as a profinite group, $G_{\bK,n}^{\ab}$ can be written as the inverse limit over all its finite quotients, and since all constructions are compatible with these limits, we conclude that the sets
\begin{equation} S_1(n,\gamma):=\{ \fn \in J_{\bK}^+ \, : \, N_{\bK}(\fn)=n \, \mbox{ and }  \, \pi_{G_{\bK,n}^{\ab}}(\vartheta_{\bK}(\fn)))=\pi_{G_{\bK,n}^{\ab}}(\gamma) \} 
\end{equation}
and
\begin{equation} S_2(n,\gamma):=\{\fm \in J_{\bL}^+ \mbox{ with } N_{\bL}(\fm)=n \mbox{ and }\pi_{G_{\bL,n}^{\ab}}(\vartheta_{\bL}(\fm))=\pi_{G_{\bL,n}^{\ab}}((\psi^{-1})^*(\gamma)) \} 
\end{equation}
have the same number of elements. We now set $\gamma=\vartheta(\tilde\fn)$ for a given ideal $\tilde\fn \in J_{\bK}^+$ of norm $n$. Since the Artin map $\vartheta_{\bK} \, : \, J_{\bK}^+ \rightarrow G_{\bK,n}^{\ab}$ is injective on ideals that divide $n$, we find that the set $S_1(N_{\bK}(\tilde\fn),\vartheta_{\bK}(\tilde\fn))$ has a unique element. Hence there is also a unique ideal $\fm \in J_{\bL}^+$ with 
$$ N_{\bL}(\fm) = N_{\bK}(\tilde\fn) $$
and
\begin{equation} \label{MMM} \pi_{G_{\bL,n}^{\ab}}(\vartheta_{\bL}(\fm)) = \pi_{G_{\bL,n}^{\ab}}((\psi^{-1})^* ( \vartheta_{\bK}(\tilde\fn))). \end{equation} After applying Pontrjagin duality, this becomes exactly statement (\ref{NNN}). 
We set $\Psi(\tilde\fn):=\fm$, and this is our desired map. It is multiplicative, since $(\psi^{-1})^*$ and the Artin maps are so. 

Finally, \eqref{NNN} is equivalent to (iv) in Theorem \ref{main3} (``Reciprocity isomorphism'') for $\hat \psi = (\psi^{-1})^*$, since the latter statement is clearly equivalent to \eqref{MMM}. \end{proof}

\section{QSM-isomorphism from matching $L$-series: homeomorphism on $X_{\bK}$}

 We now proceed to show that $\psi$ also induced a natural map on the whole abelian part $C(X_{\bK}) \rightarrow C(X_{\bL})$, not just on the part $ \psi \, : \, C(G_{\bK}^{\ab}) \isomto C(G_{\bL}^{\ab})$ where it is automatically defined (by continuity of $\psi$). We check this on ``finite'' parts of these algebras that exhaust the whole algebra, as in Section \ref{respect}.  
 \begin{lem} The map $\psi$ extends to an algebra isomorphism
 $$ \psi \, : \, C(G_{\bK}^{ab}\times_{\hat\cO_{\bK}^*} \hat \cO_{\bK}) \rightarrow C(G_{\bL}^{ab}\times_{\hat\cO_{\bL}^*} \hat \cO_{\bL}) .$$
 \end{lem}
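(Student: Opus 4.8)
The plan is to reuse the layered structure of $X_{\bK}$ developed in Section~\ref{respect}, namely the inductive system $X_{\bK} = \varinjlim_{\fn} X_{\bK,\fn}$ together with the generating set of functions $f_{\chi,\fm}$ produced in Lemma~\ref{gengen}. Since $C(X_{\bK}) = \varinjlim_{\fn} C(X_{\bK,\fn})$ and each $C(X_{\bK,\fn})$ is topologically generated (Lemma~\ref{gengen}) by the functions $f_{\chi,\fm}$ for $\chi \in \widehat G_{\bK,\fn}^{\ab}$ and $\fm \in J_{\bK,\fn}^{+}$, it suffices to define $\psi$ on these generators in a way that is compatible with norms and the Artin map, check the relations, and then extend by continuity and by passing to the limit over $\fn$.

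First I would fix $\fn$ and set $\psi(f_{\chi,\fm}) := f_{\psi(\chi),\Psi(\fm)}$, where $\Psi \colon J_{\bK}^+ \isomto J_{\bL}^+$ is the norm-preserving, Artin-compatible semigroup isomorphism from the previous proposition, and $\psi(\chi) \in \widehat G_{\bL,\Psi(\fm)}^{\ab}$ is the given character correspondence (which preserves the subgroups indexed by conductor, $\psi(\widehat G_{\bK,\fn}^{\ab}) = \widehat G_{\bL,\Psi(\fn)}^{\ab}$, as was shown there). Because $\Psi$ respects norms and $\psi$ respects conductors, the formula sends the $\fm$-component algebra to the $\Psi(\fm)$-component algebra, and on each component $\psi$ restricts to the Pontrjagin-dual isomorphism $G_{\bK,\fn}^{\ab} \isomto G_{\bL,\Psi(\fn)}^{\ab}$, which is an algebra isomorphism of the function algebras on these profinite groups by Fourier analysis (exactly as in Section~\ref{hairy}). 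Next I would verify the multiplicative relations among the $f_{\chi,\fm}$: the product $f_{\chi_1,\fm}\cdot f_{\chi_2,\fm}$ equals $f_{\chi_1\chi_2,\fm}$ in the same component, and $f_{\chi_1,\fm_1}\cdot f_{\chi_2,\fm_2} = 0$ when $\fm_1\neq\fm_2$; both are preserved since $\psi$ is a group homomorphism on characters and $\Psi$ is injective on ideals. This shows the assignment defines an algebra homomorphism $C(X_{\bK,\fn}) \to C(X_{\bL,\Psi(\fn)})$, with inverse built the same way from $\psi^{-1}$ and $\Psi^{-1}$, hence an isomorphism.

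Then I would check compatibility with the transition maps of the inductive systems: the inclusion $C(X_{\bK,\fn}) \hookrightarrow C(X_{\bK,\fn'})$ for $\fn \mid \fn'$ sends $f_{\chi,\fm}$ to a finite sum of functions $f_{\chi',\fm'}$ with $\fm \mid \fm'$ (from the further decomposition of each component over the newly adjoined primes), and the analogous decomposition holds on the $\bL$-side; since $\Psi$ and $\psi$ are compatible with divisibility and with the Galois-group quotient maps $G_{\bK,\fn'}^{\ab} \twoheadrightarrow G_{\bK,\fn}^{\ab}$, the square commutes. Passing to the limit gives the desired algebra isomorphism $\psi \colon C(X_{\bK}) \isomto C(X_{\bL})$, which by construction agrees with the continuity-defined map on $C(G_{\bK}^{\ab})$.

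The main obstacle, I expect, is the bookkeeping at the level of the transition maps: one must be careful that the decomposition of $f_{\chi,\fm}$ when passing from $X_{\bK,\fn}$ to $X_{\bK,\fn'}$ (which involves splitting a single ``$\fm$-component = copy of $G_{\bK,\fn}^{\ab}$'' into many copies indexed by the primes dividing $\fn'/\fn$, and re-expanding $\chi$ as a sum of characters of the finer group $G_{\bK,\fn'}^{\ab}$) is matched exactly by the corresponding decomposition of $f_{\psi(\chi),\Psi(\fm)}$ on the $\bL$-side, and this uses precisely the norm-preservation of $\Psi$ together with $\psi(\widehat G_{\bK,\fn}^{\ab}) = \widehat G_{\bL,\Psi(\fn)}^{\ab}$ so that the branching multiplicities agree. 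Once this is set up cleanly the rest is routine, and a slick alternative would be to bypass the explicit transition maps altogether: define $\psi$ directly on the dense subalgebra of $C(X_{\bK})$ spanned by \emph{all} the $f_{\chi,\fm}$ (over all $\fn$), observe it is an algebra isomorphism onto the corresponding dense subalgebra of $C(X_{\bL})$ preserving the sup-norm (since on each component it is a character isomorphism of $C$ of a compact group, which is isometric), and extend by continuity to the $C^*$-completion.
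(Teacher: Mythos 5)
Your construction is essentially the one in the paper: define $\psi_n$ on the generating family $f_{\chi,\fm}$ by $f_{\chi,\fm}\mapsto f_{\psi(\chi),\Psi(\fm)}$, note this is a linear bijection by bijectivity of $\psi$ and $\Psi$, pass to the direct limit, and then argue multiplicativity. The one genuine difference is \emph{how} multiplicativity is verified. You check it directly on the multiplication table of the generators: $f_{\chi_1,\fm}f_{\chi_2,\fm}=f_{\chi_1\chi_2,\fm}$ (same component) and $f_{\chi_1,\fm_1}f_{\chi_2,\fm_2}=0$ ($\fm_1\neq\fm_2$), both preserved because $\psi$ is a homomorphism of character groups and $\Psi$ is injective. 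This is correct, elementary, and --- notably --- does not invoke the Artin-map compatibility \eqref{NNN} at all. The paper instead computes $\psi(f_{\chi,\fm})$ pointwise on $X_{\bL,n}^1$, shows by \eqref{NNN} that it equals the pullback $(\psi^{-1})^*f_{\chi,\fm}$, and concludes multiplicativity automatically since pullbacks are $*$-homomorphisms; this variant also exhibits explicitly the homeomorphism of spaces underlying the algebra map (which one otherwise only gets implicitly via Gelfand duality), and makes visible exactly where the Artin compatibility enters. Your ``slick alternative'' --- working on the dense span of all $f_{\chi,\fm}$ and extending by continuity --- is entirely compatible with either version and in fact the paper implicitly does this as well.

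One small technical caveat: you freely index the layers by arbitrary ideals $\fn$ and invoke $\psi(\widehat G_{\bK,\fn}^{\ab})=\widehat G_{\bL,\Psi(\fn)}^{\ab}$, but what the preceding proposition actually establishes (from the missing Euler factors) is the statement for rational integers $n$, i.e.\ $\psi(\widehat G_{\bK,n}^{\ab})=\widehat G_{\bL,n}^{\ab}$. Since the rational-$n$ layers are cofinal in the direct limit, it is cleanest (and is what the paper does) to restrict the indexing to integers $n$; the ideal-level statement you use would need a separate (though not difficult) argument, and is avoidable.
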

 
 \begin{proof} Recall that the map $\psi \, : \, \widehat G_{\bK}^{\ab} \isomto \widehat G_{\bL}^{\ab}$ induces by duality a group isomorphism 
 $$ (\psi^{-1})^* \, : \, G_{\bK}^{\ab} \isomto G_{\bL}^{\ab}, $$ and let $\Psi \, : \, J_{\bK}^+ \isomto J_{\bL}^+$ denote the compatible isomorphism of semigroups of ideals introduced in the previous section.  
 
Recall from Section \ref{respect} how we have decomposed the algebra $C(X_{\bK})$ into pieces $C(X_{\bK,n})$, were we now assume $n$ is an integer. We can then define a map 
$$ \psi_n \, : \, C(X_{\bK,n}) \rightarrow C(X_{\bL,n}) $$
as the closure of the map given by $$f_{\chi,\fm}\mapsto f_{\psi(\chi),\Psi(\fm)},$$ where $f_{\chi,\fm}$ are the generators of the algebra $C(X_{\bK,n})$ given in Lemma \ref{gengen}.  Recall from the previous section that if $\chi \in \widehat G_{\bK}^{\ab}$ has conductor coprime to $n$, so has $\psi(\chi)$, so the map $\psi_n$ is well-defined.  The map is a vector space isomorphism by construction, since both $\psi$ and $\Psi$ are bijective. 

By taking direct limits (the maps of algebras are compatible with the divisibility relation), we arrive at a topological vector space isomorphism
$$ \psi = \lim_{{\longrightarrow}\atop{n}} \psi_n \, : \, C(X_{\bK}) \isomto C(X_{\bL}). $$

To see that the map $\psi$ is an algebra homomorphism, we need to check it is compatible with multiplication: this will follow from the compatibility of $\Psi$ with the Artin map, which implies that the function $\psi(f_{\chi,\fm})$ is given by a pullback. Indeed, for $x = \Psi(\fm') \ast [(\gamma',\rho')] \in X_{\bL,n}^1$ with $\gamma' \in G_{\bL,n}^{\ab}, \rho' \in \hat\cO_{\bL,n}^*$ and $\fm' \in J_{\bK,n}^+$, we find  that 
\begin{eqnarray*} \psi(f_{\chi,\fm})(x) &=& f_{\psi(\chi),\Psi(\fm)}(\Psi(\fm') \ast [(\gamma',\rho')]) \\ &=& \delta_{\Psi(\fm),\Psi(\fm')}  \psi(\chi)(\vartheta_{\bL}(\Psi(\fm)^{-1}) \psi(\chi)(\gamma'), \end{eqnarray*}
which, by the compatibility of $\Psi$ with the reciprocity map (Equation \eqref{NNN}), is 
$$ = \delta_{\fm,\fm'} \chi(\vartheta_{\bK}(\fm)^{-1}) \chi(\psi^*(\gamma')) = (\psi^{-1})^* f_{\chi,\fm}(x). $$
Hence if $\chi$ and $\chi'$ are two characters in $\widehat G_{\bK}^{\ab}$, and $\fm, \fm'$ are two ideals in $J_{\bK,n}^+$ for $n $ sufficiently large, we find
$$ \psi(f_{\chi,\fm} \cdot f_{\chi',\fm'}) = (\psi^{-1})^* \left( f_{\chi,\fm} \cdot f_{\chi',\fm'} \right)= (\psi^{-1})^* \left(f_{\chi,\fm}\right) \cdot (\psi^{-1})^*  \left( f_{\chi',\fm'} \right) = \psi(f_{\chi,\fm}) \cdot \psi(f_{\chi',\fm'}), $$
which implies that $\psi$ is multiplicative. 

\end{proof}

\section{QSM-isomorphism from matching $L$-series: end of proof}

\begin{thm}\label{matchLiso}
Let $\bK$ and $\bL$ denote two number fields. Suppose $\psi$ is a group isomorphism
$$ \psi \, : \, \widehat{G}^{\ab}_{\bK} \isomto \widehat{G}^{\ab}_{\bL}$$
that induces an identity of the respective $L$-functions
$$ L_{\bK}(\chi,s) = L_{\bL}(\psi(\chi),s). $$
Then there is a dagger isomorphism of QSM-systems
$\varphi: (A_{\bK},\sigma_{\bK}) \to (A_{\bL}, \sigma_{\bL})$.
\end{thm}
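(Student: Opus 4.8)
The strategy is to assemble a dagger isomorphism $\varphi: (A_{\bK},\sigma_{\bK}) \to (A_{\bL},\sigma_{\bL})$ out of the three ingredients produced by the previous two sections: the semigroup isomorphism $\Psi: J_{\bK}^+ \isomto J_{\bL}^+$ (compatible with Artin reciprocity via $\psi$ and norm-preserving), the induced homeomorphism coming from $(\psi^{-1})^*: G_{\bK}^{\ab} \isomto G_{\bL}^{\ab}$, and the algebra isomorphism $\psi: C(X_{\bK}) \isomto C(X_{\bL})$ constructed in the previous section. First I would recall that $A_{\bK} = C(X_{\bK}) \rtimes J_{\bK}^+$ is the closure of the linear span of monomials $\mu_{\fn}^* f \mu_{\fm}$ with $f \in C(X_{\bK})$ and $\fn, \fm \in J_{\bK}^+$, and that such a crossed product is characterized by a universal property with respect to covariant representations. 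So it suffices to exhibit a covariant pair: the $*$-homomorphism $\psi: C(X_{\bK}) \to C(X_{\bL}) \subseteq A_{\bL}$ together with the isometries $\mu_{\Psi(\fn)} \in A_{\bL}$, and to check that the covariance relation $\psi(\rho_{\fn}(f)) = \mu_{\Psi(\fn)} \psi(f) \mu_{\Psi(\fn)}^*$ holds. Then the universal property yields a $*$-homomorphism $\varphi: A_{\bK} \to A_{\bL}$ with $\varphi(f) = \psi(f)$ and $\varphi(\mu_{\fn}) = \mu_{\Psi(\fn)}$; symmetrically one builds $\varphi^{-1}$ from $\Psi^{-1}$ and $\psi^{-1}$, and the two compose to the identity on generators, hence $\varphi$ is an isomorphism.

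The one genuinely substantive verification is the covariance relation. Unwinding the definitions, $\rho_{\fn}(f)(\gamma,\rho) = f(\vartheta_{\bK}(\fn)\gamma, \fn^{-1}\rho) e_{\fn}$, so I would need to check that $\psi$ intertwines the two partially defined actions $\ast$ on $X_{\bK}$ and $X_{\bL}$, namely that $\psi(f)(\Psi(\fn) \ast y) = \psi(\sigma_{\fn}(f))(y)$ and the analogous identity for $\rho_{\fn}$, together with the fact that $\psi$ sends the projector $e_{\bK,\fn}$ (characteristic function of $\mathrm{Range}(e_{\bK,\fn})$) to $e_{\bL,\Psi(\fn)}$. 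The key computation from the previous section — that $\psi(f_{\chi,\fm}) = (\psi^{-1})^* f_{\chi,\fm}$, i.e.\ $\psi$ acts on the generators $f_{\chi,\fm}$ by pullback along the homeomorphism $\Phi := ((\psi^{-1})^*, \Psi)$ of $X_{\bK}$ with $X_{\bL}$ induced on the dense subspaces $X^1$ — is exactly what makes this work: once $\psi$ is known to be ``geometric'' (a pullback $\psi(f) = f \circ \Phi^{-1}$ on a dense subalgebra, hence everywhere by continuity), and once one checks $\Phi(\fn \ast x) = \Psi(\fn) \ast \Phi(x)$ (which follows from the compatibility of $\Psi$ with $\vartheta$ encoded in \eqref{NNN} and the duality between $(\psi^{-1})^*$ and $\psi$), the covariance relation is a formal consequence of the definitions of $\rho_{\fn}$ and $e_{\fn}$ in terms of $\ast$.

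After covariance is established, I would note that $\varphi$ automatically intertwines the time evolutions: $\sigma_{\bK,t}$ acts trivially on $C(X_{\bK})$ and by $N_{\bK}(\fn)^{it}$ on $\mu_{\fn}$, while $\sigma_{\bL,t}$ acts by $N_{\bL}(\Psi(\fn))^{it} = N_{\bK}(\fn)^{it}$ on $\mu_{\Psi(\fn)}$ since $\Psi$ is norm-preserving; so $\varphi \circ \sigma_{\bK,t} = \sigma_{\bL,t} \circ \varphi$ on generators, hence everywhere. Finally, $\varphi$ preserves the dagger subalgebras essentially by construction: $A_{\bK}^{\dagger}$ is the algebraic (non-involutive) subalgebra generated by $C(X_{\bK})$ and the $\mu_{\fn}$, and $\varphi$ sends these generators to $C(X_{\bL})$ and the $\mu_{\Psi(\fn)}$, which generate $A_{\bL}^{\dagger}$; since $\Psi$ is a bijection, $\varphi(A_{\bK}^{\dagger}) = A_{\bL}^{\dagger}$. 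The main obstacle, as indicated, is the bookkeeping in the covariance check — in particular, being careful with the non-canonical section $s$ implicit in $\vartheta_{\bK}(\fn)$ and with the behaviour on the measure-zero ``degenerate'' part $X^2$ where components of $\rho$ vanish; but the density of $X^1$ (Corollary \ref{dude}) and continuity reduce everything to the dense subalgebra where the pullback description of $\psi$ is available, so no real analytic difficulty arises beyond that.
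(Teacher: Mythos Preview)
Your proposal is correct and follows essentially the same approach as the paper: build $\varphi$ from the algebra isomorphism $\psi: C(X_{\bK}) \isomto C(X_{\bL})$ together with $\mu_{\fn} \mapsto \mu_{\Psi(\fn)}$, then verify compatibility with the time evolution using that $\Psi$ is norm-preserving, and observe that the dagger subalgebra is preserved by construction. In fact you are more thorough than the paper, which simply asserts that these maps induce an isomorphism $A_{\bK}^\dagger \isomto A_{\bL}^\dagger$ extending to $A_{\bK} \isomto A_{\bL}$; your explicit covariance check via the pullback description $\psi(f) = f \circ \Phi^{-1}$ and the compatibility $\Phi(\fn \ast x) = \Psi(\fn) \ast \Phi(x)$ is exactly the content that the paper leaves implicit.
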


\begin{proof} 
The maps $\psi \colon X_{\bK} \rightarrow X_{\bL}$ and $\mu_{\fn} \mapsto  \mu_{\Psi(\fn)}$ induce an isomorphism 
$$ \varphi: A^{\dagger}_{\bK} \rightarrow A^{\dagger}_{\bL}, $$
which extends to a $C^*$-algebra isomorphism between $A_{\bK}$ and $A_{\bL}$. 

It remains to verify that this map is indeed a QSM-isomorphism, i.e., that it commutes with time evolution. On the abelian part, there is nothing to verify, since it is stable by time evolution. On the semigroup part, it is a simple consequence of the fact that $\Psi$ preserves norms: 
$$ N_{\bL}(\Psi(\fn)) = N_{\bK}(\fn), $$
so that, on the one hand 
$$\sigma_{\bL,t}(\varphi(\mu_{\fn})) = N_{\bL}(\Psi(\fn))^{it} \mu_{\Psi(\fn)}, $$ and on the other hand, 
$$\varphi(\sigma_{\bK,t}(\mu_{\fn})) = \varphi( N_{\bK}(\fn)^{it} \mu_n) = N_{\bK}(\fn)^{it} \mu_{\Psi(\fn)}.$$
This finishes the proof that $$\sigma_{\bL,t} \circ \varphi = \varphi \circ \sigma_{{\bK},t}.$$ 
\end{proof}

\begin{remark}
As quoted in the introduction, in \cite{Riem}, it was shown that an equality of infinitely many Dirichlet series associated to a map between closed Riemannian manifolds is equivalent to this map being an isometry. In the same reference, it is then shown how to use this theorem to define a distance between closed Riemannian manifolds, as infimum over a usual distance between complex functions. With number fields, we are now in a very analogous situation, in that we characterize number fields by an equality of Dirichlet series. One might use this to define a distance on the set of all number fields up to isomorphism.  It then remains to investigate whether this (forcedly discrete) distance on a countable set has an interesting completion (much like passing from $\Q$ to $\R$): are there interesting `limits' of number fields? Also, metrizing abstract number fields might be useful to our understanding of ``arithmetic statistics''--- the distribution of invariants over number fields (compare \cite{Venk}). \end{remark}

\section{Proof of Theorem \ref{main3}}

We now show that  reciprocity isomorphism (iv) implies L-isomorphism (iii). Since obviously, field isomorphism (i) implies reciprocity isomorphism (iv), this will finish the proof of all main theorems from the introduction.   

The condition of compatibility with Artin maps at finite level can be rephrased as follows: for any $\fn$ dividing an integer $n$, we have that $$\pi_{G_{\bK,n}^{\ab}} (\hat \psi(\vartheta_{\bK}(\fn))) = \pi_{G_{\bL,n}^{\ab}}(\vartheta_{\bL}(\Psi(\fn))), $$ to which we can apply Pontrjagin duality to find that 
$$ \chi(\vartheta_{\bK}(\fn)) = \psi(\chi)(\vartheta_{\bL}(\Psi(\fn)), $$
for all characters $\chi$ whose conductor is coprime to $n$. Here, we define the map $\psi$ by 
$$ \psi=(\hat\psi^{-1})^* \colon \widehat G_{\bK}^{\ab} \isomto \widehat G_{\bL}^{\ab}. 
$$
Let $\chi \in \widehat G_{\bK}^{\ab}$. We prove the theorem by performing a change in summation $\fm = \Psi(\fn)$ in the $L$-series as follows (using that norms are preserved, and Artin maps intertwined): 

$$ L_{\bK}(\chi,s) = \sum_{\fn \in J_{\bK}^+} \frac{\chi(\vartheta_{\bK}(\fn))}{N_{\bK}(\fn)^s} = \sum_{\fm \in J_{\bL}^+} \frac{\psi(\chi)(\vartheta_{\bL}(\fm))}{N_{\bL}(\fm)^s} = L_{\bL} (\psi(\chi),s). $$
(Recall that in this definition, we have set $\chi(\vartheta_{\bK}(\fn))=0$ as soon as $\fn$ is not coprime to the conductor of $\chi$.)

\begin{remark}
In Uchida's proof of the function field case of the Neukirch-Uchida theorem (\cite{U}), the construction of a multiplicative map of global function fields $(\bK^*,\times) \isomto (\bL^*,\times)$ is based on the existence of topological group isomorphisms of the ideles $ \Psi \, : \, \A_{\bK}^* \isomto \A_{\bL}^*$ and of the abelianized Galois groups $ \hat\psi \, : \, G_{\bK}^{\ab} \isomto G_{\bL}^{\ab}$ which are compatible with the Artin maps, using that in a function field $\bK$, the group $\bK^*$ is the kernel of the Artin map (which is not surjective in this case). The conditions that go into this proof are a bit similar to the ones in Theorem \ref{main3}. Our theorem shows that similar conditions imply the same result for number fields as for function fields, albeit with a rather different proof.
\end{remark} 

\begin{remark} Around 1992, Dinakar Ramakrishnan asked whether isomorphism between two number fields $\bK$ and $\bL$ is equivalent to the existence of an isomorphism $\alpha \colon \A_{\bK} \isomto \A_{\bL}$ of their respective adele rings and an isomorphism $\omega \colon W_{\bK}^{\ab} \isomto W_{\bL}^{\ab}$ of the abelianizations of their Weil groups. If these two isomorphisms are compatible with reciprocity in the sense that the following diagram commutes
$$  \xymatrix{ \A_{\bK}^* \ar@{->}[r]^{\alpha} & \A_{\bL}^* \\  W_{\bK}^{\ab} \ar@{->}[r]^{\omega} \ar@{->}[u] & W_{\bL}^{\ab}  \ar@{->}[u] },$$
then their kernels are isomorphic, so $\alpha$ restricts to an isomorphism $\bK^* \isomto \bL^*$, which, extended by $0 \mapsto 0$, gives a field isomorphism of $\bK$ and $\bL$ (the additivity is automatic from the embedding into the adele rings). The question remains whether the same holds without assuming compatibility of the maps via reciprocity. 
\end{remark}

\section{Relaxing the conditions on $L$-series}

\begin{se}
One may now wonder whether condition (iii) (L-isomorphism) of Theorem \ref{main2}, can be weakened. For example, is it possible to restrict to characters of fixed type? At least for rational characters of order two (i.e., arising from quadratic extensions by the square root of a rational number), this is not the case, as the following proposition shows. 
\end{se} 

\begin{prop}
Suppose $\bK$ and $\bL$ are number fields with the same Dedekind zeta function. Then for any quadratic character $\chi$ whose conductor is a rational non-square in $\bK$ nor $\bL$, we have an equality of $L$-series $L_{\bK}(\chi,s)=L_{\bL}(\chi,s)$. \end{prop}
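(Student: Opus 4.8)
The statement concerns rational quadratic characters, i.e.\ characters $\chi_d$ attached to the extension $\bK(\sqrt d)/\bK$ for a rational integer $d$ that is not a square in $\bK$ (equivalently in $\bL$, since arithmetically equivalent fields contain the same rational square roots). The key observation is that $L_{\bK}(\chi_d, s) = \zeta_{\bK(\sqrt d)}(s) / \zeta_{\bK}(s)$, so it suffices to prove that $\zeta_{\bK(\sqrt d)}(s) = \zeta_{\bL(\sqrt d)}(s)$, i.e.\ that $\bK(\sqrt d)$ and $\bL(\sqrt d)$ are again arithmetically equivalent. The plan is to deduce this from the Ga{\ss}mann-theoretic characterization of arithmetic equivalence: two number fields are arithmetically equivalent if and only if, writing $M$ for a common Galois closure with group $G = \Gal(M/\bQ)$ and $U, V$ for the subgroups fixing $\bK, \bL$, the $G$-sets $G/U$ and $G/V$ are \emph{Ga{\ss}mann equivalent} (the permutation characters of $G$ on $G/U$ and $G/V$ coincide, equivalently $|U \cap C| = |V \cap C|$ for every conjugacy class $C$ of $G$).

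First I would set up the group-theoretic framework: enlarge $M$ so that $M \ni \sqrt d$ and $M/\bQ$ is Galois, let $G = \Gal(M/\bQ)$, and let $H = \Gal(M/\bQ(\sqrt d))$, an index-two (hence normal) subgroup of $G$. Then $\bK(\sqrt d)$ corresponds to the subgroup $U \cap H$ and $\bL(\sqrt d)$ to $V \cap H$. The goal becomes: Ga{\ss}mann equivalence of $(G, U, V)$ implies Ga{\ss}mann equivalence of $(G, U\cap H, V\cap H)$. For this I would compute the permutation character of $G$ acting on $G/(U \cap H)$ in terms of the permutation character on $G/U$: since $H \trianglelefteq G$ has index $2$, the $G$-set $G/(U\cap H)$ is either isomorphic to $G/U$ (when $U \not\subseteq H$) or to two disjoint copies of a transitive $G$-set related to $G/U$ by the nontrivial character $\epsilon$ of $G/H$ (when $U \subseteq H$). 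In either case the permutation character of $G/(U\cap H)$ is expressible as a fixed universal operation (independent of $U$) applied to the permutation character of $G/U$ together with $\epsilon$ — concretely, $\mathrm{Ind}_{U\cap H}^G \mathbf 1 \cong \mathrm{Ind}_U^G\mathbf 1 \oplus (\epsilon \otimes \mathrm{Ind}_U^G \mathbf 1)$ always holds as virtual characters once one checks the two cases, using $\mathrm{Ind}_U^G\mathrm{Res}_U^G \epsilon = \epsilon \otimes \mathrm{Ind}_U^G \mathbf 1$ and $\mathrm{Res}_U^G\epsilon = \mathbf 1$ iff $U\subseteq H$. Since the permutation characters of $G/U$ and $G/V$ are equal by hypothesis and $\epsilon$ depends only on $(G,H)$, the permutation characters of $G/(U\cap H)$ and $G/(V\cap H)$ are equal, which is exactly Ga{\ss}mann equivalence of $\bK(\sqrt d)$ and $\bL(\sqrt d)$.

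From arithmetic equivalence of $\bK(\sqrt d)$ with $\bL(\sqrt d)$ and of $\bK$ with $\bL$ one gets $\zeta_{\bK(\sqrt d)} = \zeta_{\bL(\sqrt d)}$ and $\zeta_{\bK} = \zeta_{\bL}$, hence
$$
L_{\bK}(\chi_d, s) = \frac{\zeta_{\bK(\sqrt d)}(s)}{\zeta_{\bK}(s)} = \frac{\zeta_{\bL(\sqrt d)}(s)}{\zeta_{\bL}(s)} = L_{\bL}(\chi_d, s),
$$
as claimed. The main obstacle — really the only subtle point — is the bookkeeping in the character identity $\mathrm{Ind}_{U\cap H}^G \mathbf 1 \cong \mathrm{Ind}_U^G\mathbf 1 \oplus (\epsilon\otimes\mathrm{Ind}_U^G\mathbf 1)$: one must handle cleanly the two cases $U\subseteq H$ and $U\not\subseteq H$ (in the first case $G/(U\cap H)$ genuinely splits into two $G$-orbits of size $[G:U]$, corresponding to the two square roots of $d$; in the second $U\cap H$ has index $[G:U]$ and the twist by $\epsilon$ fixes the permutation character), and verify the formula is the same in both. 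Everything else — that a common Galois closure exists, that one may enlarge it to contain $\sqrt d$, that the Ga{\ss}mann criterion is equivalent to arithmetic equivalence, and the factorization of $\zeta$ over the quadratic extension — is standard and may be cited (e.g.\ \cite{Perlis1}, \cite{Klingen}). I would also remark that $d$ being a non-square in $\bK$ is equivalent to its being a non-square in $\bL$ because arithmetically equivalent fields have the same rational numbers that become squares (seen, e.g., via the splitting behavior of primes, or directly from Ga{\ss}mann equivalence applied to the quadratic subextensions).
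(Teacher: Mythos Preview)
Your overall strategy matches the paper's exactly: reduce to showing that $\bK(\sqrt d)$ and $\bL(\sqrt d)$ are again arithmetically equivalent, and then divide zeta functions using the Artin factorization $\zeta_{\bK(\sqrt d)}=\zeta_{\bK}\cdot L_{\bK}(\chi_d,\cdot)$. The paper proves the preservation of arithmetic equivalence via the \emph{conjugacy class} form of Ga{\ss}mann's criterion (citing Uchida): since $H=\Gal(M/\bQ(\sqrt d))$ is normal in $G$, every conjugacy class $C$ of $G$ lies either entirely inside $H$ or entirely outside it, so $|(U\cap H)\cap C|$ equals either $|U\cap C|$ or $0$, and likewise for $V$; the equality $|U\cap C|=|V\cap C|$ therefore transfers immediately to $U\cap H$ and $V\cap H$. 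This one-line argument replaces your induced-character computation.

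Your route via permutation characters is a legitimate alternative, but the write-up contains two slips you should fix. First, the case descriptions are swapped: when $U\not\subseteq H$ the set $G/(U\cap H)$ has cardinality $2\,[G:U]$, not $[G:U]$, while when $U\subseteq H$ one has $U\cap H=U$ and $G/(U\cap H)=G/U$. Second, and more seriously, the identity
\[
\mathrm{Ind}_{U\cap H}^G\mathbf 1\;\cong\;\mathrm{Ind}_U^G\mathbf 1\ \oplus\ \bigl(\epsilon\otimes\mathrm{Ind}_U^G\mathbf 1\bigr)
\]
does \emph{not} hold ``in either case'': if $U\subseteq H$ then the left side is $\mathrm{Ind}_U^G\mathbf 1$ while the right side is $2\,\mathrm{Ind}_U^G\mathbf 1$ (since then $\epsilon|_U=\mathbf 1$). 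The formula is correct precisely when $U\not\subseteq H$, which --- as you yourself note at the end --- is exactly the hypothesis that $d$ is a non-square in $\bK$; the same hypothesis for $\bL$ gives $V\not\subseteq H$. So your argument is salvageable once you drop the claim of a uniform formula and simply observe that the hypothesis forces $U\not\subseteq H$ and $V\not\subseteq H$, after which your projection-formula computation is valid for both and the conclusion follows. The paper's conjugacy-class argument has the mild advantage that it needs no case distinction at all.
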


\begin{proof}
We have \begin{equation} \label{1} \zeta_{\bK}(s) = \zeta_{\bL}(s) \end{equation} 
This says that $\bK$ and $\bL$ are arithmetically equivalent, which we can express in group theoretical terms by Ga{\ss}mann's criterion (\cite{Perlis1}) as follows: let $\bN$ be Galois over $\Q$ containing $\bK$ and $\bL$; then $\Gal(\bN/\bK)$ and $\Gal(\bN/\bL)$ intersect all conjugacy classes in $\Gal(\bN/\Q)$ in the same number of elements. 

Let $\bM=\Q(\sqrt{d})$ for a rational non-square $d$. It is easy to see from Ga{\ss}mann's criterion for arithmetic equivalence that then, the composita $\bK \bM$ and $\bL \bM$ are also arithmetically equivalent (cf.\ e.g.\ Uchida \cite{U2}, Lemma 1): choose $\bN$ so it also contains $\bM$, and verify that  $\Gal(\bN/\bK\bM)$ and $\Gal(\bN/\bL\bM)$ intersect all conjugacy classes in $\Gal(\bN/\Q)$ in the same number of elements. 
We conclude that the zeta functions of $\bK \bM = \bK(\sqrt{d})$ and $\bL=\bL(\sqrt{d})$ are equal:
 \begin{equation} \label{2} \zeta_{\bK\bM}(s) = \zeta_{\bL\bM}(s) \end{equation} 
Let $\chi$ be the quadratic character that belongs to $d$. By Artin factorization, we can write \begin{equation} \label{3} \zeta_{\bK \bM}(s) = \zeta_{\bK}(s) \cdot L_{\bK}(\chi,s) \mbox{ and } \zeta_{\bL \bM}(s) = \zeta_{\bL}(s) \cdot L_{\bL}(\chi,s). \end{equation}

We find the conclusion by combining (\ref{1}), (\ref{2}) and (\ref{3}).
\end{proof}

\begin{remark}
We do not know a direct ``analytic'' proof that equality of zeta functions implies equality of all rational quadratic twist $L$-series. As a matter of fact, looked at in a purely analytic way, the result does not appear to be so obvious at all. 
\end{remark}

\begin{remark}
Bart de Smit \cite{BDS} has proven that for $\bK$ and $\bL$ to be isomorphic, it suffices to have an equality between the sets of all zeta functions of abelian extensions of $\bK$ and $\bL$, or between the sets of all $L$-series for characters of order $\leq 2$. His method is constructive in the sense that, for given arithmetically equivalent $\bK$ and $\bL$, one may construct a finite set of quadratic characters whose $L$-series have to match for $\bK$ and $\bL$ to be isomorphic. 
\end{remark}

\begin{remark}
One may wonder how much information an equality of \emph{sets} of $L$-series with characters encodes about the characters themselves (so not assuming the identification of $L$-series to arise from an isomorphism of abelianized Galois groups).  Bart de Smit has constructed an example of two number fields and two characters of \emph{different} order whose $L$-series coincide. Multiplicative relations (more general than equality) between $L$-series on the same number field are discussed in \cite{Artin} (around \emph{Satz} 5). 
 \end{remark}

\bibliographystyle{amsplain}

\end{document}